\documentclass[a4paper,12pt]{amsart}

\usepackage{amssymb, amsmath, amsthm, mathrsfs, braket, xspace}
\usepackage[margin=1.0in]{geometry}

\numberwithin{equation}{section}
\usepackage[colorlinks=true]{hyperref}

\usepackage{mathtools}
\mathtoolsset{showonlyrefs=true} 
\usepackage{enumitem}
\setlist[enumerate]{label=$(\mathrm{\arabic*})$}

\usepackage[all,2cell]{xy}
\objectmargin+{1mm}
\labelmargin+{0.8mm}
\SelectTips{cm}{12}
\usepackage{tikz}
\usepackage{float}
\usepackage{aliascnt}

\newtheorem{thm}{Theorem}[section]

\newaliascnt{cor}{thm}
\newtheorem{cor}[cor]{Corollary}
\aliascntresetthe{cor}

\newaliascnt{lem}{thm}
\newtheorem{lem}[lem]{Lemma}
\aliascntresetthe{lem}

\newaliascnt{prop}{thm}
\newtheorem{prop}[prop]{Proposition}
\aliascntresetthe{prop}

\newaliascnt{conj}{thm}
\newtheorem{conj}[conj]{Conjecture}
\aliascntresetthe{conj}

\theoremstyle{definition}
\newaliascnt{dfn}{thm}

\aliascntresetthe{dfn}

\newaliascnt{rem}{thm}
\newtheorem{rem}[rem]{Remark}
\aliascntresetthe{rem}

\newaliascnt{prob}{thm}

\aliascntresetthe{prob}

\newaliascnt{ex}{thm}

\aliascntresetthe{ex}

\newcommand{\Q}{\mathbb{Q}}

\newcommand{\C}{\mathbb{C}}
\newcommand{\Z}{\mathbb{Z}}

\newcommand{\bF}{\mathbb{F}}

\DeclareMathOperator{\ch}{char}
\DeclareMathOperator{\Coker}{Coker}
\DeclareMathOperator{\Cor}{Cor}

\DeclareMathOperator{\Hom}{Hom}

\DeclareMathOperator{\Ker}{Ker}
\DeclareMathOperator{\res}{res}
\DeclareMathOperator{\Spec}{Spec}
\DeclareMathOperator{\Tr}{Tr}

\DeclareMathOperator{\trdeg}{trdeg}

\newcommand{\cd}{\operatorname{cd}}

\newcommand{\et}{\mathrm{et}}
\newcommand{\Zar}{\mathrm{Zar}}

\newcommand{\Gm}{\mathbb{G}_{m}}

\newcommand{\sep}{\mathrm{sep}}

\title{Divisibility and torsion in higher Chow groups over arithmetic fields}

\author[T. Hiranouchi]{Toshiro Hiranouchi}\address[T. Hiranouchi]{
Department of Basic Sciences, Graduate School of Engineering, 
Kyushu Institute of Technology, 
1-1 Sensui-cho, Tobata-ku, Kitakyushu-shi, 
Fukuoka, 804-8550 JAPAN}
\email{hira@mns.kyutech.ac.jp}

\author[R. Sugiyama]{Rin Sugiyama} \address[R. Sugiyama]{
Department of Mathematics, Physics and Computer Science, 
Japan Women's University, 2-8-1 Mejirodai, Bunkyo-ku, Tokyo, 112-8681 JAPAN
}\email{sugiyamar@fc.jwu.ac.jp}

\begin{document}

\begin{abstract}
Let $X$ be a smooth scheme of dimension $d$ over a field $F$. We study
the abelian-group structure of the higher Chow groups $CH^{d+i}(X,j)$.
For a prime $l$ different from the characteristic of $F$, 
we prove divisibility and torsion-freeness
results when $i\ge$ the $l$-cohomological dimension of $F$. 
If $X$ is smooth proper and geometrically irreducible, we also study the kernel of the push-forward
$CH^{d+i}(X,j)\to CH^i(F,j)$. We apply these results to finite fields,
local fields, and global fields. 
\end{abstract}

\maketitle

\section{Introduction}
Bloch's higher Chow groups $CH^m(X,n)$ provide a
cycle-theoretic realization of motivic cohomology. For $n=0$, they
recover the ordinary Chow groups:
\[
CH^m(X,0)=CH^m(X).
\]
For $n>0$, they contain additional arithmetic information and are
related to algebraic $K$-theory through localization sequences and
the isomorphism
\[
CH^n(\Spec(F),n) =:CH^n(F,n)\simeq K_n^M(F).
\]
The purpose of this paper is to study the
abelian-group structure of such higher Chow groups in 
$m>d$, where $d=\dim X$, with particular emphasis on finiteness, divisibility, and 
torsion-freeness.

More precisely, 
let $X$ be a smooth proper geometrically irreducible scheme over a field $F$ of dimension $d$. 
We are mainly
interested in groups of the form
$CH^{d+i}(X,j)$ for non-negative integers $i,j$. 
Our motivation comes from Akhtar's work on higher Chow groups of
zero-cycles and one-cycles over finite and global fields, which we
recall below.
In the following $\ch(F)$ denotes the characteristic of $F$.

\begin{thm}[{\cite[Theorem~1.2 and Theorem~1.3]{Akh04b}, \cite[Proposition 4.2]{Akh04b}}] 
	\label{thm:Akh}
	Suppose that $F = \bF_q$ is a finite field of characteristic $p$. 

	\begin{enumerate}
		\item We have 
		\[CH^{d+i}(X,i) \simeq \begin{cases}
			0 ,& \mbox{if $i\ge 2$},\\
			\bF_q^\times,& \mbox{if $i=1$}.\\
		\end{cases}
		\]
	\item $CH^{d+i}(X,i+1)$ is torsion for every $i\ge 1$. 
	\item $CH^{d+i}(X,i+1)$ is $p$-divisible for $i\geq 2$. 
	If, in addition, $X$ is a curve, then $CH^{i+1}(X,i+1)$ is uniquely $p$-divisible. 
	\end{enumerate}  
\end{thm}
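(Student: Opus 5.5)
This is Akhtar's theorem, so I would reconstruct the argument from the standard tools: the Bloch--Kato/Beilinson--Lichtenbaum comparison between motivic and étale cohomology, the push-forward to the base field, and Gersten-type descriptions of the groups via points.

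\textbf{Step 1: rewrite $CH^{d+i}(X,j)$ via closed points.} Use the niveau (Gersten/Bloch--Ogus) spectral sequence for higher Chow groups. In codimension-$d$ range it gives a presentation
\[
\bigoplus_{y\in X_{(1)}} K^M_{i+1}(k(y))\xrightarrow{\partial}\bigoplus_{x\in X_{(0)}} K^M_i(k(x))\to CH^{d+i}(X,i)\to 0 ,
\]
where $X_{(r)}$ denotes the set of points of dimension $r$. This generalizes the Nesterenko--Suslin--Totaro identification $CH^i(F,i)\simeq K^M_i(F)$.

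\textbf{Step 2: part (1).} Each residue field $k(x)$ of a closed point is finite, so $K^M_i(k(x))=0$ for $i\ge 2$. Hence $CH^{d+i}(X,i)=0$ for $i\ge2$.

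For $i=1$, the presentation identifies $CH^{d+1}(X,1)$ with the $SK_1$-type group
\[
V(X)\;=\;\Coker\Bigl(\bigoplus_y K_2(k(y))\to\bigoplus_x k(x)^\times\Bigr).
\]
The push-forward to $\Spec \bF_q$ is induced by the norms $k(x)^\times\to\bF_q^\times$, and it is surjective. Reading $X_0$ as the set of closed points of $X$, I would show that its kernel
\[
\Ker\Bigl(\bigoplus_{x\in X_0} k(x)^\times\to\bF_q^\times\Bigr)
\]
is generated by boundaries of tame symbols on curves. This is the main obstacle. I would first reduce to curves: by a Bertini/moving argument, any two closed points lie on a smooth curve in $X$. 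For a smooth projective curve $C$ over $\bF_q$, I would then use Kato--Saito unramified class field theory, where $SK_1(C)\simeq \bF_q^\times$ via the norm (equivalently Bass--Tate/Milnor reciprocity plus Weil reciprocity). Geometric irreducibility is used so that the norm lands in $\bF_q^\times$ rather than in a larger constant field.

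\textbf{Step 3: part (2), torsionness.} Use the push-forward to $CH^{i}(\bF_q,i+1)$. This group vanishes: it is motivic cohomology $H^{i-1}(\bF_q,\Z(i))$, which is zero modulo torsion by Quillen's computation of the $K$-theory of finite fields, and torsion elements are controlled by étale cohomology of $\bF_q$. Rationally, $CH^{d+i}(X,i+1)_\Q$ is a graded piece of $K_{1}(X)_\Q$ under the Adams decomposition. For smooth projective $X$ over a finite field, this piece vanishes in the relevant weight: the Gersten presentation together with Quillen's result that $K_n(k(x))$ is torsion for $n\ge1$ shows that all contributing groups $K_{i+1}(k(x))$ at closed points are torsion. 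Concretely, the niveau spectral sequence expresses $CH^{d+i}(X,i+1)$ as a subquotient of $\bigoplus_x H^{i-1}(k(x),\Z(i))$ and of terms from curves $H^{i}(k(y),\Z(i+1))$, modulo images. These are torsion: for closed points by Quillen, and for function fields of curves via $K^M$ modulo indecomposables together with Bloch--Kato.

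\textbf{Step 4: part (3), $p$-divisibility.} Use Geisser--Levine: $CH^{m}(Y,n)/p^r\simeq H^{m-n}_{\Zar}(Y,\nu_r(m))$, the logarithmic de Rham--Witt sheaf. Here $\nu_r(d+i)=0$ for $d+i>\dim$ of the relevant fields, which gives vanishing mod $p$ once $i\ge2$. The $p$-torsion is controlled via the same sequence in degree shifted by one. For a curve with $i\ge1$, the groups $K_{i+1}^M(k(C))$ have $p$-torsion zero by Izhboldin, which yields unique $p$-divisibility.
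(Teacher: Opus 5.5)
The genuine gap is in Step~3, the proof that $CH^{d+i}(X,i+1)$ is torsion.

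Several identifications there are wrong. First, $CH^i(\bF_q,i+1)$ is not zero for $i=2$: it is $\Z/(q^2-1)$, the case $2i-j=1$ of \autoref{lem:fin}. In any case, knowing the target of $f_*$ says nothing about $\Ker(f_*)$. Second, rationally $CH^{d+i}(X,i+1)_\Q\simeq K_{i+1}(X)^{(d+i)}_\Q$, not a piece of $K_1(X)_\Q$. Its vanishing is an instance of Parshin's conjecture, so it cannot simply be quoted.

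More seriously, your niveau bookkeeping is off by one. In total degree $i+1$, the only $E_1$-terms of the localization spectral sequence are $\bigoplus_{x\in X_{(0)}}CH^i(k(x),i+1)$ and $\bigoplus_{y\in X_{(1)}}CH^{i+1}(k(y),i+1)=\bigoplus_{y}K^M_{i+1}(k(y))$. The group $H^i(k(y),\Z(i+1))=CH^{i+1}(k(y),i+2)$ that you list sits in total degree $i+2$; it enters only as the source of a differential. So the curve contribution is a subquotient of $\bigoplus_y K^M_{i+1}(k(y))$, where each $k(y)$ is a global function field.

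Bloch--Kato cannot prove that this is torsion. It only gives statements modulo $l$, e.g.\ $l$-divisibility of $K^M_n(k(y))$ for $n\ge3$, and a uniquely divisible group satisfies all of them. This is exactly the ambiguity that finite-coefficient methods leave open, as seen in the uniquely divisible summands of \autoref{cor:fin2}.

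The missing input is the integral arithmetic of global function fields due to Bass--Tate, the same facts used in \autoref{lem:global}. For such a field $K$, $K_2(K)$ is torsion and $K^M_n(K)=0$ for $n\ge3$. With this, $CH^{d+i}(X,i+1)$ is an extension of a subquotient of $\bigoplus_y K^M_{i+1}(k(y))$ by a quotient of $\bigoplus_x CH^i(k(x),i+1)$. Both are torsion, the latter by \autoref{lem:fin}, and for $i\ge2$ the curve term even vanishes.

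Two smaller points. In Step~2 with $i=1$, classical Bertini is unavailable over $\bF_q$ and $X$ is only assumed proper. So your reduction to curves needs Poonen's Bertini theorem, or Kato--Saito directly. You can avoid this altogether: your Step~1 shows $CH^{d+1}(X,1)$ is torsion, while \autoref{cor:fin2}\,(1) gives $CH^{d+1}(X,1)\simeq\Z/(q-1)\oplus A$ with $A$ uniquely divisible, which forces $A=0$.

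In Step~4, Geisser--Levine computes the finite-coefficient group $CH^m(Y,n;\Z/p^r)$, into which $CH^m(Y,n)/p^r$ only injects. Since $\bF_q$ is perfect, $\nu_r^{d+i}=0$ already for $i\ge1$. This gives unique $p$-divisibility for all $i\ge1$ and all $j$, as in \autoref{prop:uniq-p-div} with $s=1$, so the separate Izhboldin argument for curves is unnecessary.
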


In \cite[Theorem~1.6]{GKR26}, 
over a finite field $\bF_q$ of $\ch(\bF_q) = p$, 
Gupta--Krishna--Rathore proved that the prime-to-$p$ torsion part 
$CH^{d+i}(X,j)\{p'\}$ is finite for every $i\ge1$ and $j\ge0$.
Their work gives strong control of the torsion subgroup, but it does not
by itself determine the whole abelian-group structure. 
Our main goal is to give a uniform description of
$CH^{d+i}(X,j)$ in terms of the integer $2i-j$. 
Our first result is a general statement over a field of finite
$l$-cohomological dimension.

\begin{thm}[{\autoref{prop:l-div}}]\label{thm:CHdiv-intro}
Let $F$ be a field with $\cd_l(F)=s$, where $l\neq\ch(F)$, and let
$X$ be a smooth scheme over $F$ of dimension $d$.
Assume that $i\ge s$. Then
\[
CH^{d+i}(X,j)= 
\begin{cases}
\text{uniquely $l$-divisible},
    &\mbox{if $2i-j\ge s+2$},\\
\text{$l$-divisible},
    &\mbox{if $2i-j=s+1$},\\
\text{$l$-torsion free},
    &\mbox{if $2i-j=-2d$},\\
\text{uniquely $l$-divisible},
    &\mbox{if $2i-j<-2d$}.
\end{cases}
\]
\end{thm}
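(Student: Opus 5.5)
The plan is to use the multiplication-by-$l^n$ sequence for motivic cohomology with finite coefficients, together with the Beilinson--Lichtenbaum (Bloch--Kato, Voevodsky) comparison with étale cohomology. Write $m=d+i$, so that $CH^{m}(X,j)=H^{2m-j}_{\mathcal M}(X,\Z(m))$. The short exact sequence $0\to\Z(m)\xrightarrow{l^n}\Z(m)\to\Z/l^n(m)\to0$ gives
\[
H^{2m-j-1}_{\mathcal M}(X,\Z/l^n(m))\to CH^m(X,j)\xrightarrow{l^n}CH^m(X,j)\to H^{2m-j}_{\mathcal M}(X,\Z/l^n(m)).
\]
Hence $l$-divisibility of $CH^m(X,j)$ follows once $H^{2m-j}_{\mathcal M}(X,\Z/l(m))=0$. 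Unique $l$-divisibility follows if in addition $H^{2m-j-1}_{\mathcal M}(X,\Z/l(m))=0$. Torsion-freeness follows if $H^{2m-j-1}_{\mathcal M}(X,\Z/l(m))=0$, or more precisely if the connecting map from it into $CH^m(X,j)[l]$ vanishes. Throughout, $n=1$ suffices.

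For the upper range we use that $m=d+i\ge d+s$. In the range $p\le m+1$, Beilinson--Lichtenbaum gives $H^p_{\mathcal M}(X,\Z/l(m))\hookrightarrow H^p_{\et}(X,\mu_l^{\otimes m})$, with equality for $p\le m$. The étale group vanishes once $p>\cd_l(X)$. The Hochschild--Serre spectral sequence gives $\cd_l(X)\le \cd_l(F)+2d=s+2d$; for non-proper smooth $X$ one instead has the bound $\cd_l(X_{\bar F})\le d$ by affine Lefschetz, but $2d+s$ is always valid.

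We then check the degrees $p=2m-j=2d+2i-j$. If $2i-j\ge s+1$, then $p\ge 2d+s+1>\cd_l(X)$. We also need $p$ to lie in the range where the motivic group is controlled. If $p>m+1$, we instead use the general vanishing $H^p_{\mathcal M}(X,\Z/l(m))=0$ for $p>m+d$: the condition $p=2m-j>m+d$ holds iff $i>j$, which covers the cases with $j$ small. For $p\le m+1$ we use the comparison. The intermediate range $m+1<p\le m+d$ is the main obstacle. There I would use the Gersten/coniveau spectral sequence for motivic cohomology with $\Z/l$ coefficients, whose $E_1$-terms are $\bigoplus_{x\in X^{(c)}}H^{p-2c}(k(x),\Z/l(m-c))$. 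Each such term satisfies $p-2c\le m-c$ or lies beyond $\cd_l(k(x))\le s+d-c$. It is therefore étale, and it vanishes as soon as $p-2c>s+d-c$, i.e. $p>s+d+c$. The worst case is $c=d$, giving the condition $p>s+2d$. This is exactly $2i-j\ge s+1$, giving $l$-divisibility. Shifting $p$ down by one for the unique divisibility gives $2i-j\ge s+2$. The hypothesis $i\ge s$ is what ensures $m-c\ge p-2c$ fails only where the cohomological-dimension bound applies, i.e. that each point's coefficient weight $m-c\ge s+d-c\ge\cd_l k(x)$ places us in the étale range.

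For the lower range, $2i-j\le -2d$ means $p=2m-j\le 0$. Motivic cohomology $H^p_{\mathcal M}(X,\Z/l(m))$ vanishes for $p<0$: by the coniveau argument it reduces to Galois cohomology in negative degree. Thus if $p<0$ both flanking groups vanish, and $CH^m(X,j)$ is uniquely $l$-divisible. If $p=0$, the group $H^{-1}$ vanishes, so multiplication by $l$ is injective and we get $l$-torsion-freeness. Divisibility fails here, since $H^0=\mu_l^{\otimes m}(X)$ can be nonzero, which matches the statement. The step to verify carefully is the coniveau bookkeeping in the middle range, in particular the convergence and the exact use of $i\ge s$ in the Beilinson--Lichtenbaum range for every residue field.
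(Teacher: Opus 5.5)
Your proof is correct, but it takes a more roundabout route than the paper. The intermediate range $m+1<p\le m+d$, which you flag as the main obstacle, never arises in the paper.

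In \autoref{lem:cycl} the paper observes that when $m=d+i\ge d+s$, every affine open $U\subset X$ has $\cd_l(U)\le \dim U+s\le m$. Hence the Zariski sheaves $R^q\epsilon_*\Z/l(m)$ vanish for $q>m$, and the truncation in $\Z/l(m)_M\simeq\tau_{\le m}R\epsilon_*\Z/l(m)$ does nothing. So the cycle class map is an isomorphism onto \'etale cohomology in every degree. Combined with $\cd_l(X)\le 2d+s$ and vanishing in negative degrees, the two coefficient sequences then give all four cases at once.

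Your coniveau argument is the pointwise version of the same mechanism: Beilinson--Lichtenbaum at each residue field, together with $\cd_l(k(x))\le s+d-c$. It is sound. Two simplifications are available:
\begin{itemize}
\item Since the coniveau argument handles every total degree $p$, your split into $p\le m+1$, $p>m+d$ and the middle range is redundant.
\item The negative range needs only Beilinson--Lichtenbaum on $X$ itself, because there $p\le 0\le m$.
\end{itemize}

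The paper's route buys the full identification $CH^{d+i}(X,j;\Z/l^n)\simeq H^{2d+2i-j}_{\et}(X,\Z/l^n(d+i))$. It reuses this in \autoref{thm:CH-div} (trace maps and the Hochschild--Serre analysis) and throughout \autoref{sec:application}. Your route yields only the vanishing needed here.

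In exchange, your route makes visible that the upper-range vanishing does not really use $i\ge s$. Terms with $p-2c>m-c$ already die because motivic cohomology of a field vanishes above the weight. When $i<s$ the upper-range cases force $j<i$, where the group is zero anyway.
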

In characteristic $\ch(F) = p>0$, the prime-to-$p$ statement is complemented by
a $p$-primary result. If $[F:F^p]\le p^{s-1}$ and $i\ge s$, then
$CH^{d+i}(X,j)$ is uniquely $p$-divisible (\autoref{prop:uniq-p-div}). 
The proof uses the localization spectral
sequence, the theorem of Geisser--Levine, and the
Bloch--Gabber--Kato theorem.

We next apply the general results to \emph{arithmetic fields}, namely, 
finite fields, local fields, and global fields. 
The finite field case gives the cleanest structure theorem. In the statements below,
an expression such as ``$CH^{d+i}(X,j)$ $\simeq$ finite $\oplus$ uniquely divisible'' means that
there exists a non-canonical direct-sum decomposition with a finite
summand and a uniquely divisible summand. The same convention applies
to the analogous expressions used for local and global fields. Precise
statements, including the canonical short exact sequences when
available, are given in the corresponding results in \autoref{sec:application}.

\begin{thm}[{\autoref{cor:fin2}}]
\label{cor:fin2-into}
Let $X$ be a smooth proper and geometrically irreducible scheme over $\bF_q$
of dimension $d$, and let
$p=\ch(\bF_q)$.
Assume that $i\ge1$. Then
\[
CH^{d+i}(X,j)\simeq
\begin{cases}
\Z/(q^i-1)\oplus\text{uniquely divisible},
    &\mbox{if $2i-j=1$},\\
\text{finite}\oplus\text{uniquely divisible},
    &\mbox{if $-2d<2i-j<1$},\\
\text{uniquely divisible},
    &\mbox{otherwise}.
\end{cases}
\]
In the middle range, the finite group has order prime to $p$.
\end{thm}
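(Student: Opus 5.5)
The plan is to treat each prime separately and then assemble the pieces with the elementary structure theory of abelian groups. For $\bF_q$ we have $\cd_l(\bF_q)=1$ for every $l\neq p$, and $[\bF_q:\bF_q^p]=1=p^{0}$, so $s=1$ throughout and the hypothesis $i\ge 1$ is exactly $i\ge s$.

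\textbf{Step 1: the outer ranges.} By \autoref{prop:uniq-p-div}, $CH^{d+i}(X,j)$ is uniquely $p$-divisible for every $j$. For $l\neq p$, \autoref{thm:CHdiv-intro} with $s=1$ gives the following.
\begin{itemize}
\item If $2i-j\ge 3$ or $2i-j<-2d$, the group is uniquely $l$-divisible.
\item If $2i-j=2$, it is $l$-divisible.
\item If $2i-j=-2d$, it is $l$-torsion free.
\end{itemize}
So in the ``otherwise'' range, which consists of $2i-j\ge 2$ and $2i-j\le -2d$, two things remain to be shown. For $2i-j=2$ we must prove $l$-torsion freeness, and for $2i-j=-2d$ we must prove $l$-divisibility.

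For these two boundary cases I would use the torsion control. By the Gupta--Krishna--Rathore theorem, the prime-to-$p$ torsion $CH^{d+i}(X,j)\{p'\}$ is finite. Next, the Bloch--Kato/Geisser--Levine identification gives $CH^{d+i}(X,j;\Z/l^n)\simeq H^{2d+2i-j}_{\et}(X,\Z/l^n(d+i))$. Combined with the universal coefficient sequence, this reduces the question to the vanishing of $\varinjlim_n$ or $\varprojlim_n$ of étale cohomology in degree $2d+2i-j$.
\begin{itemize}
\item For $2i-j=2$ the degree is $2d+2>2d+1=\cd_l(X)$, so the cohomology vanishes and the torsion $\Z/l^n$-module is zero.
\item For $2i-j=-2d$ the degree is $0$. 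Here $H^0(X,\Q_l/\Z_l(d+i))$ is killed by the weight argument, since Frobenius acts by $q^{d+i}\neq 1$ on the twist, and this forces $l$-divisibility.
\end{itemize}
This gives the ``uniquely divisible'' line.

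\textbf{Step 2: the middle and $2i-j=1$ ranges.} In these ranges I would first show that the torsion subgroup $T$ is finite. It is uniquely $p$-divisible, so it has no $p$-torsion, and its prime-to-$p$ part is finite by GKR. Second, I would show that $CH/T$ is divisible. It suffices that $CH\otimes\Q_l/\Z_l$ is finite for each $l$ and vanishes for almost all $l$. This should follow from $CH^{d+i}(X,j;\Z/l^n)\simeq H^{2d+2i-j}_{\et}(X,\Z/l^n(d+i))$ together with Deligne's weights, which make $H^*(X_{\bar F},\Q_l(d+i))$ have no Frobenius invariants or coinvariants outside a controlled set and give finite groups that vanish for $l\nmid$ the relevant orders. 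Given a finite torsion subgroup and a divisible torsion-free quotient, the extension splits: $T$ is finite, so $\Ext^1(D,T)=0$ for divisible torsion-free $D$, because $D$ is a $\Q$-vector space. This yields finite $\oplus$ uniquely divisible.

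For $2i-j=1$ I would identify the finite part via the push-forward $CH^{d+i}(X,j)\to CH^i(\bF_q,2i-1)$. By the comparison above, its torsion is $H^1(\bF_q,\Q_l/\Z_l(i))$ summed over $l$, which is $\Z/(q^i-1)$. The kernel of the push-forward should be uniquely divisible by the same weight argument, since the geometric cohomology in degree $2d$ contributes only through the trace.

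\textbf{Main obstacle.} The hardest step is the divisibility of $CH/T$ in the middle range, and in particular uniformity in $l$, meaning vanishing for almost all $l$. There I would first try to use Gabber's theorem that $H^*(X_{\bar F},\Z_l)$ is torsion-free for almost all $l$, together with the Weil conjectures, so that $(q^{m}-\text{Frobenius eigenvalues})$ is an $l$-adic unit for all large $l$.
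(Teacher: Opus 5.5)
There is a genuine gap in your Step~1 at $2i-j=2$. It comes from an index shift in the coefficient sequence. By \eqref{eq:coeff} with $j+1$ in place of $j$, the torsion $CH^{d+i}(X,j)[l^n]$ is a quotient of $H^{2d+2i-j-1}_{\et}(X,\Z/l^n(d+i))$. The degree $2d+2i-j$ controls only $CH^{d+i}(X,j)/l^n$, and \autoref{prop:l-div} already gave you that vanishing. At $2i-j=2$ the relevant degree is $2d+1=\cd_l(X)$, so cohomological dimension gives nothing. In fact $H^{2d+1}_{\et}(X,\Z/l^n(d+i))\simeq H^1_{\et}(\bF_q,\Z/l^n(i))\simeq\Z/\gcd(l^n,q^i-1)$, which is nonzero whenever $l\mid q^i-1$. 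So $l$-torsion freeness at $2i-j=2$ is not proved.

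Within your framework the fix is to pass to the colimit over $n$. Then $CH^{d+i}(X,j)\{l\}$ is a quotient of $H^{2d+1}_{\et}(X,\Q_l/\Z_l(d+i))\simeq H^1_{\et}(\bF_q,\Q_l/\Z_l(i))=(\Q_l/\Z_l)/(q^i-1)=0$. The paper argues differently: it uses $CH^i(\bF_q,2i-2)=0$ from \autoref{lem:fin} to get $CH^{d+i}(X,j)=A^{d+i}(X,j)$, then applies \autoref{thm:CH-div}\,(2), whose case $2i-j=s+1$ needs a norm argument over an extension with a rational point. Your repaired route is shorter but relies on a vanishing special to $\bF_q$.

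The same shift affects your $2i-j=1$ step. $H^1_{\et}(\bF_q,\Q_l/\Z_l(i))$ is $0$, not $\Z/l^{v_l(q^i-1)}$. The group you want is $H^0_{\et}(\bF_q,\Q_l/\Z_l(i))$, which via the trace is the $E^{0,2d}$ piece of $H^{2d}_{\et}(X,\Q_l/\Z_l(d+i))$. The weight argument is what kills the other piece, $H^1_{\et}(\bF_q,H^{2d-1}_{\et}(X_{\overline{\bF}_q},\Q_l/\Z_l(d+i)))$; this is exactly the paper's proof. You also need compatibility of the cycle class maps with $f_*$ and the trace, which gives surjectivity of $f_*$ on torsion (\autoref{thm:CH-div}\,(1)); your sketch does not supply this. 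At $2i-j=-2d$, $H^0_{\et}(X,\Q_l/\Z_l(d+i))\simeq\Z/l^{v_l(q^{d+i}-1)}$ is finite, not zero. Finiteness is enough, because the divisible group $CH^{d+i}(X,j)\otimes\Q_l/\Z_l$ injects into it and the group is $l$-torsion free. Alternatively, use your Step~2 decomposition plus torsion-freeness, as the paper does.

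Otherwise your Step~2 is a valid alternative to \autoref{lem:fin-structure}. The paper bounds $|H^{b}_{\et}(X,\Z/m(d+i))|$ uniformly over $m$ prime to $p$ using Kahn's finiteness theorem, then applies \autoref{lem:RS}. You instead take finiteness of the torsion subgroup from Gupta--Krishna--Rathore. You then show $CH^{d+i}(X,j)\otimes\Q_l/\Z_l=0$ one prime at a time, using finiteness of $H^{2d+2i-j}_{\et}(X,\Q_l/\Z_l(d+i))$, which holds for $j\ge1$ by Deligne's weights. The ``main obstacle'' you single out is not actually an obstacle. $CH^{d+i}(X,j)\otimes\Q_l/\Z_l$ is divisible, so finiteness for each single $l$ already forces it to vanish. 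No uniformity in $l$ is needed once the torsion is known to be finite.
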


If $X$ is projective and Parshin's
conjecture holds for $X$, the uniquely divisible parts in the above theorem vanish. 
Thus the groups are finite in the remaining nonzero ranges; see
\autoref{cor:fin}.
This extends Akhtar's results (\autoref{thm:Akh}) from the cases
$j=i$ and $j=i+1$ to all $j$.

For a local field $F$ with residue characteristic $p$, the group structure of $CH^{d+i}(X,j)$  is
more complicated. 
The classical structure of $CH^i(F,i)\simeq K_i^M(F)$ already shows
that the residue prime behaves differently from the other primes.
For local fields we obtain a more refined structure theorem. 

\begin{thm}[{\autoref{cor:CH-loc}}]
Let $F$ be a local field with finite residue field $k$, and put $p=\ch(k)$.
Let $X$ be a smooth proper and geometrically irreducible scheme over $F$
of dimension $d$. Assume that $i\ge2$. Then there exist
non-canonical decompositions of the following forms:
\[
CH^{d+i}(X,j)
\simeq
\begin{cases}
\text{uniquely divisible},
    & \mbox{if $2i-j\ge3$},\\
CH^i(F,j)_{\mathrm{tor}}\oplus\text{uniquely divisible},
    & \mbox{if $2i-j=2$},\\
\text{finite}\oplus\text{uniquely $p'$-divisible},
    & \mbox{if $-2d<2i-j\le1$ and $\ch(F)=0$},\\
\text{finite}\oplus\text{uniquely divisible},
    & \mbox{if $-2d<2i-j\le1$ and $\ch(F)>0$},\\
\text{uniquely divisible},
    & \mbox{if $2i-j\le-2d$},
\end{cases}
\]
Here, 
an abelian group $G$ is called
\emph{uniquely $p'$-divisible} if it is uniquely $l$-divisible for every prime
$l\neq p$.
\end{thm}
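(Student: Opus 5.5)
The plan is to reduce everything to the general divisibility results (\autoref{thm:CHdiv-intro} and \autoref{prop:uniq-p-div}) together with an analysis of finite-coefficient groups via étale cohomology. For a local field $F$ we have $\cd_l(F)=2$ for every prime $l\neq\ch(F)$. If $\ch(F)=p>0$ then $[F:F^p]=p=p^{s-1}$ with $s=2$. So with $i\ge 2$ both results apply with $s=2$. They give the following:
\begin{itemize}
\item[(a)] $CH^{d+i}(X,j)$ is uniquely $l$-divisible for all $l\ne \ch(F)$ if $2i-j\ge 4$ or $2i-j<-2d$;
\item[(b)] it is $l$-divisible if $2i-j=3$, and $l$-torsion free if $2i-j=-2d$;
\item[(c)] if $\ch(F)=p>0$, it is uniquely $p$-divisible for every $j$.
\end{itemize}
When $\ch(F)=0$ and $l=p$, we still have $\cd_p(F)=2$, so \autoref{thm:CHdiv-intro} applies to $l=p$ as well. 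This settles the first and last lines of the statement except at the boundary values $2i-j=3$ and $2i-j=-2d$.

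For all remaining cases I would use a single structural lemma about an abelian group $A$. Suppose $A_{\mathrm{tor}}$ is finite and $A/l=0$ for all but finitely many primes $l$ (respectively, all $l\ne p$), and $A/l^n$ is finite and stabilises. Then $A\simeq A_{\mathrm{tor}}\oplus(\text{uniquely divisible})$, respectively finite $\oplus$ uniquely $p'$-divisible. The proof is to split off the finite torsion using divisibility of $A/A_{\mathrm{tor}}$. To verify the hypotheses, I would use the universal coefficient sequence
\[
0\to CH^{d+i}(X,j)/l^n\to CH^{d+i}(X,j;\Z/l^n)\to CH^{d+i}(X,j+1)[l^n]\to0 .
\]
I would then compute the middle term through the localization (coniveau) spectral sequence. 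Its $E_1$-terms are higher Chow groups of residue fields $k(x)$, with $\cd_l k(x)\le 2+\dim\overline{\{x\}}$. By Geisser--Levine/Bloch--Kato, each term is Galois cohomology of the form $H^{2n-m}(k(x),\Z/l^n(n))$ in the relevant range, and it vanishes when $2n-m$ exceeds the cohomological dimension. The upshot is that the finite-coefficient group is identified with $H^{2d+2i-j}_{\et}(X,\Z/l^n(d+i))$ when $2i-j\ge -2d$, and with $0$ when $2i-j$ is too large. Both are finite groups by finiteness of étale cohomology of proper varieties over local fields.

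Concretely, for $2i-j=3$ the étale group lies in degree $2d+3>\cd_l(X)=2d+2$ and so vanishes. This gives $l$-torsion freeness, and combined with (b) unique divisibility. For $2i-j=2$ the group is $H^{2d+2}_{\et}(X,\Z/l^n(d+i))$, and the trace map identifies it with $H^2(F,\Z/l^n(i))$. Comparing with the push-forward $CH^{d+i}(X,j)\to CH^i(F,j)$ then shows that the torsion of $CH^{d+i}(X,j)$ maps isomorphically onto $CH^i(F,j)_{\mathrm{tor}}$, while the kernel and the quotient by torsion are uniquely divisible. For $-2d<2i-j\le1$ the groups $H^{2d+2i-j}_{\et}(X,\Q_l/\Z_l(d+i))$ are finite. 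They vanish for almost all $l$ by a weight argument: Deligne purity for a good-reduction model, or de Jong alterations in general, shows the relevant Frobenius eigenvalues are never $1$, as in the finite field case of \autoref{cor:fin2-into}. For $l=p$ with $\ch(F)=0$, no such uniform control is available, and this is exactly why the result there is only uniquely $p'$-divisible.

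The main obstacle is the middle range. One must show that $H^{*}_{\et}(X,\Q_l/\Z_l(d+i))$ and the associated $\Z_l$-cohomology vanish for almost all $l$, uniformly enough to yield a finite torsion subgroup. I would first try reducing to the finite residue field via the Hochschild--Serre spectral sequence for $G_F$, using $\cd(G_F)=2$ and local duality. After that, I would invoke the Gupta--Krishna--Rathore finiteness argument on a semistable alteration.
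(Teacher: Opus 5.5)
Your proposal has two genuine gaps, both at the boundary cases $2i-j=3$ and $2i-j=2$. The first comes from an indexing error. The coefficient sequence is
\[
0\to CH^{r}(X,j)/l^n\to CH^{r}(X,j;\Z/l^n)\to CH^{r}(X,j-1)[l^n]\to 0,
\]
with $j-1$ on the right, not $j+1$. Hence $CH^{d+i}(X,j)[l^\infty]$ is a quotient of $H^{2d+2i-j-1}_{\et}(X,\Q_l/\Z_l(d+i))$, one degree lower than the degree you use.

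\textbf{The case $2i-j=3$.} Vanishing of $H^{2d+3}_{\et}$ only reproves $l$-divisibility and gives no torsion-freeness. What you need is $H^{2d+2}_{\et}(X,\Q_l/\Z_l(d+i))=0$. By Hochschild--Serre and the trace, this group is $H^2(F,\Q_l/\Z_l(i))$. That group vanishes for $i\ge 2$ by local Tate duality, being dual to $H^0(F,\Z_l(1-i))=0$, and this also holds for $l=p$ when $\ch(F)=0$. The finite-level groups $H^2(F,\Z/l^n(i))$ are generally nonzero, so you must pass to the colimit. This repair is easy, and slightly more direct than the paper's route. The paper instead combines unique $l$-divisibility of $A^{d+i}(X,j)$ (via a norm argument), unique $l$-divisibility of $CH^i(F,j)$, and exactness of $f_*$ on $l$-torsion.

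\textbf{The case $2i-j=2$: the serious gap.} The identification $H^{2d+2}_{\et}(X,\Z/l^n(d+i))\simeq H^2(F,\Z/l^n(i))$ only controls $CH^{d+i}(X,j)/l^n$. Once the $2i-j=3$ case is fixed, it gives $CH^{d+i}(X,j)/l^n\simeq CH^i(F,j)/l^n$, but it says nothing about torsion. The torsion of $CH^{d+i}(X,j)$ is governed by $H^{2d+1}_{\et}(X,\Q_l/\Z_l(d+i))$. There the kernel of the trace is $E_\infty^{2,2d-1}$, a quotient of
\[
H^2\bigl(F,H^{2d-1}_{\et}(X_{F^{\sep}},\Q_l/\Z_l(d+i))\bigr).
\]
Your claim that the torsion maps isomorphically onto $CH^i(F,j)_{\mathrm{tor}}$ needs $A^{d+i}(X,j)$ to be torsion free. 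That requires this group to vanish for every prime $l$, including $l=p$ when $\ch(F)=0$.

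This vanishing is the main new input of the paper, \autoref{lem:E_2-term}. Its steps are:
\begin{itemize}
\item identify the coefficients, up to finite groups, with the Tate module of the Albanese, and use local duality to turn $H^2$ into $G_F$-coinvariants;
\item after semistable reduction and rigid uniformization, reduce to $T_l(B)$ for an abelian variety $B$ with good reduction;
\item prove $(T_l(B)\otimes\Q_l/\Z_l(i-1))_{G_F}=0$, by Frobenius weights for $l\neq p$ and by Faltings' Hodge--Tate decomposition for $l=p$ (the weights of $V_p(B^\vee)(-i)$ are nonzero for $i\ge 2$).
\end{itemize}
Nothing in your proposal supplies this step, and in particular nothing $p$-adic. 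Yet the statement asserts a uniquely divisible complement at $2i-j=2$ even when $\ch(F)=0$.

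\textbf{What is sound.} The remaining ranges work, including $2i-j=-2d$. There $CH^{d+i}(X,j)/l^n$ injects into $H^0(F,\Z/l^n(d+i))$, whose order is bounded by $|H^0(F,\Q_l/\Z_l(d+i))|<\infty$, so your splitting lemma applies to the torsion-free group. That lemma is valid: finite $A\{p'\}$ plus prime-by-prime boundedness of $|A/l^n|$ suffices. In the middle range $-2d<2i-j\le 1$ you end up relying on the Gupta--Krishna--Rathore finiteness theorem, exactly as the paper does. Your off-by-one is harmless there, but the ``weight argument for almost all $l$'' is not needed once you cite that theorem.
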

The proof uses local Tate duality, the structure of Galois cohomology of
Tate twists over local fields, and weight arguments for the Tate module of
the Albanese variety.

Finally, let $F$ be a global field. 
We recall that a \textbf{global field} is a finite extension of $\Q$ or a 
function field of one variable over a finite field. 
The following results are known. 

\begin{prop}[{\cite[Corollary 7.2]{Akh04a}}]\label{prop:Akh04_7.2}
    Assume that $F$ is a global field. 
    \begin{enumerate}
        \item If $\ch(F) = 0$, then we have 
    $CH^{d+i}(X,i) \simeq  
    \begin{cases} 
    torsion, & i = 2,\\
    2\text{-}torsion & i\ge 3.
    \end{cases}$
    \item If $\ch(F) >0$, then 
    $CH^{d+i}(X,i) \simeq 
    \begin{cases} 
    torsion, & i = 2,\\
    0 & i\ge 3.
    \end{cases}$
    \end{enumerate}
\end{prop}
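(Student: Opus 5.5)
Following Akhtar, the plan is to reduce everything to statements about the Milnor $K$-groups $K_i^M(F)=CH^i(F,i)$ via the push-forward $\pi_*\colon CH^{d+i}(X,i)\to CH^i(F,i)$ for the structure map $\pi\colon X\to\Spec F$. The group $CH^{d+i}(X,i)$ is generated by the classes of closed points $x\in X$ with symbols in $K_i^M(F(x))$, via the push-forwards from $CH^i(F(x),i)$. This description comes from the localization (niveau) sequence: the top piece is the cokernel of the tame-symbol map $\bigoplus_{y\in X^{(d-1)}}K^M_{i+1}(F(y))\to\bigoplus_{x\in X^{(d)}}K^M_i(F(x))$. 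So the task splits into two parts: understand $K_i^M(F)$ for $i\ge2$ over a global field, and show that the kernel of $\pi_*$ is annihilated by suitable integers.

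For the first part I would invoke Bass--Tate and Tate's computation of $K_2$ of global fields, together with the Milnor conjecture (or, classically, Elman--Lam / Bass--Tate for $i\ge3$). These give:
\begin{itemize}
\item $K_2^M(F)$ is torsion: $K_2$ of a global field is torsion by Garland and the Tate sequence.
\item For $i\ge3$, $K_i^M(F)\simeq(\Z/2)^{r_1}$ in characteristic $0$, where $r_1$ is the number of real places, and $K_i^M(F)=0$ in positive characteristic.
\end{itemize}
The same facts hold for every finite extension $F(x)$. Hence each summand $K_i^M(F(x))$ is torsion for $i=2$, and $2$-torsion or $0$ for $i\ge3$.

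For the second part no kernel argument is needed. Since $CH^{d+i}(X,i)$ is a quotient of $\bigoplus_x K_i^M(F(x))$, every element is a finite sum of images of torsion (resp. $2$-torsion, resp. zero) elements. Therefore $CH^{d+i}(X,i)$ is torsion for $i=2$, killed by $2$ for $i\ge3$ in characteristic $0$, and zero for $i\ge3$ in positive characteristic.

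The main obstacle is making the localization step precise: the surjection $\bigoplus_{x\in X_{(0)}}CH^i(F(x),i)\twoheadrightarrow CH^{d+i}(X,i)$. I would obtain it from the coniveau spectral sequence for higher Chow groups, using Bloch's localization theorem, together with the vanishing of $CH^{m}(F(y),n)$ for $m>n$. For fields, cycles of codimension $m>n$ in $\Delta^n$ vanish in the relevant range, which kills all terms coming from points of codimension less than $d$. The Milnor $K$ inputs are quoted from the literature, and their $F(x)$-versions hold uniformly because each $F(x)$ is again a global field.
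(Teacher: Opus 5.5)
Your argument is correct. The paper gives no proof of this statement; it only quotes Akhtar's Corollary~7.2. Your route is, in substance, Akhtar's own argument:
\begin{enumerate}
\item every element of $CH^{d+i}(X,i)$ is a sum of push-forwards of Milnor symbols in $K_i^M(F(x))$ at closed points $x$ (in Akhtar this appears as his symbol presentation of $CH^{d+i}(X,i)$);
\item the classical results on $K_*^M$ of global fields then apply to each residue field $F(x)$, which is again a global field.
\end{enumerate}

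Three small remarks.
\begin{enumerate}
\item You do not need the localization spectral sequence for the surjection $\bigoplus_{x\in X_{(0)}}CH^i(F(x),i)\to CH^{d+i}(X,i)$. For dimension reasons $z^{d+i}(X,i-1)=0$. Moreover $z^{d+i}(X,i)=\bigoplus_x z^i(F(x),i)$, because codimension-$(d+i)$ cycles on $X\times(\mathbb{P}^1\setminus\{1\})^i$ are closed points lying over closed points of $X$, with the same face conditions. So the surjectivity holds already at the level of cycle complexes.
\item The Milnor conjecture and Elman--Lam inputs are superfluous. The Bass--Tate computation $K_i^M(E)\simeq(\Z/2)^{r_1(E)}$ for $i\ge3$ (and $0$ in positive characteristic), together with the torsionness of $K_2$ of a global field (Garland plus the tame symbol sequence), is all that is used.
\item The paper's own machinery does not recover this. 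Its corollary for global fields, applied with $j=i\ge3$, gives only unique divisibility, and only when $F$ is totally imaginary or $\ch(F)>0$. The actual vanishing, respectively killing by $2$, comes precisely from your direct Milnor $K$-theoretic input.
\end{enumerate}
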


Under the assumption that 
$\ch(F)>0$ or $F$ is a totally imaginary number field, 
we obtain the following structure results.

\begin{thm}[\autoref{cor:global}]
Let $F$ be a global field such that either $\ch(F)>0$ or $F$ is a
totally imaginary number field. Let $X$ be a smooth proper and
geometrically irreducible scheme over $F$ of dimension $d$.
Assume that $i\ge2$. Then
\[
CH^{d+i}(X,j)\simeq
\begin{cases}
\text{uniquely divisible},
    & \mbox{if $2i-j\ge3$},\\
\text{infinite torsion} \oplus \text{divisible},
    & \mbox{if $2i-j=2$},\\
\text{torsion free},
    & \mbox{if $2i-j=-2d$},\\
\text{uniquely divisible},
    & \mbox{if $2i-j<-2d$}.
\end{cases}
\]
\end{thm}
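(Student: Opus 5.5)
We prove the four cases of the global-field theorem one at a time, and for each case we reduce to the general results via local-global considerations. The key input is cohomological dimension. For a global field $F$ with $\ch(F)>0$, or $F$ totally imaginary, we have $\cd_l(F)=2$ for every prime $l\neq\ch(F)$. With $s=2$ and $i\ge2$, \autoref{thm:CHdiv-intro} (i.e.\ \autoref{prop:l-div}) then gives the following for each $l\ne\ch(F)$:
\begin{itemize}
\item uniquely $l$-divisible when $2i-j\ge 4$ or $2i-j<-2d$;
\item $l$-divisible when $2i-j=3$;
\item $l$-torsion free when $2i-j=-2d$.
\end{itemize}
When $\ch(F)=p>0$ we have $[F:F^p]=p=p^{s-1}$ with $s=2$. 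Since $i\ge2$, \autoref{prop:uniq-p-div} makes $CH^{d+i}(X,j)$ uniquely $p$-divisible in every range. This already settles the cases $2i-j<-2d$ and $2i-j=-2d$, and the case $2i-j\ge 4$.

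For $2i-j=3$ we still need $l$-torsion freeness. We would take the localization/Bloch--Kato argument behind \autoref{prop:l-div} and push it one step further. Via Geisser--Levine and Bloch--Kato, the $l$-torsion of $CH^{d+i}(X,j)$ is controlled by the cokernel of $H^{2(d+i)-j-1}_{\et}(X,\Q_l/\Z_l(d+i))$, and $CH^{d+i}(X,j+1)\otimes\Q_l/\Z_l$ surjects onto that cohomology group. In degree $2d+2i-j-1=2d+2$ the Hochschild--Serre spectral sequence, with $\cd_l F=2$, reduces this group to $H^2(F,H^{2d}(X_{\bar F},\Q_l/\Z_l(d+i)))=H^2(F,\Q_l/\Z_l(i))$. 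That group vanishes for $i\ge2$, since over a global field with no real places $H^2(F,\Q_l/\Z_l(i))=0$ for $i\ge 2$ by Poitou--Tate. Hence the torsion subgroup is trivial, and $CH^{d+i}(X,j)$ is uniquely divisible.

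For $2i-j=2$ we first get divisibility by $l$ for all $l$, from \autoref{prop:l-div} with $2i-j=s$... but that is outside its range, so here we argue directly. The torsion $CH^{d+i}(X,j)\{l\}$ is a quotient of $H^{2d+2i-j-1}_{\et}(X,\Q_l/\Z_l(d+i))=H^{2d+1}$, which Hochschild--Serre identifies with a group built from $H^1(F,\Q_l/\Z_l(i))$ and $H^2(F,H^{2d-1}(X_{\bar F})(d+i))$. Summed over $l$, the groups $H^1(F,\Q_l/\Z_l(i))$ are infinite: they contain $K_{2i-1}$-type torsion and are nonzero for infinitely many $l$. Via the push-forward to $CH^i(F,j)$ this infinite torsion splits off.

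To get divisibility when $2i-j=2$, we would show that the cokernel of $l$-multiplication is detected by $H^{2d+2}_{\et}(X,\Z/l(d+i))=H^2(F,\mu_l^{\otimes i})$, which is nonzero. So instead we use that $CH^{d+i}(X,j)\otimes\Q_l/\Z_l$ maps isomorphically onto its image, and that $\varprojlim$ arguments give divisibility of the quotient by torsion. This divisibility step is the main obstacle, and the first thing I would try is the norm/push-forward sequence to $CH^i(F,j)$ and known divisibility of $K$-groups of global fields. Once divisibility is in hand, a divisible subgroup is a direct summand, which yields the decomposition torsion $\oplus$ divisible.
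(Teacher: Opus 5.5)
Your treatment of $2i-j\ge4$, $2i-j=-2d$ and $2i-j<-2d$ via \autoref{prop:l-div} and \autoref{prop:uniq-p-div} matches the paper. For $2i-j=3$ your route is valid but different from the paper's. You observe that the $l$-primary torsion of $CH^{d+i}(X,j)$ is a quotient of $H^{2d+2}_{\et}(X,\Q_l/\Z_l(d+i))\simeq H^{2}_{\et}(F,\Q_l/\Z_l(i))$, and that this group vanishes. The paper instead uses the norm argument of \autoref{thm:CH-div}(2) for $A^{d+i}(X,j)$, together with $CH^i(F,j)=0$ from \autoref{lem:global}. Two corrections are needed here. First, $CH^{d+i}(X,j+1)\otimes\Q_l/\Z_l$ injects into that cohomology group; it does not surject onto it. Only the vanishing of the group is used, so this does not affect your conclusion. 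Second, $H^{2}_{\et}(F,\Q_l/\Z_l(i))=0$ for $i\ge2$ is not a formal consequence of Poitou--Tate. The local terms vanish, so Poitou--Tate only reduces it to a Tate--Shafarevich group, and its vanishing is the deep input. That input is Soul\'e's theorem for number fields and a Weil-conjecture argument for function fields. It also follows from the paper's facts that $CH^i(F,2i-2)$ is torsion and $CH^i(F,2i-3)=0$, via the coefficient sequence.

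The genuine gap is the case $2i-j=2$, which the proposal does not prove: you leave divisibility open, and the other steps are heuristic. Three things are needed.

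(i) $CH^i(F,j)$ is an infinite torsion group (\autoref{lem:global}(1)). Torsionness comes from Borel/Harder. Infiniteness comes from the residue map to $\bigoplus_x\Z/(\#\kappa(x)^{i-1}-1)$ combined with finiteness of $CH^i(U,2i-3)$. Your argument via $H^{1}_{\et}(F,\Q_l/\Z_l(i))$ does not give this: a quotient of an infinite group need not be infinite. Moreover, the ``$K_{2i-1}$ part'' of that group is the image of $CH^i(F,j+1)\otimes\Q_l/\Z_l$, which is exactly what dies before reaching torsion.

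(ii) $f_*$ is surjective. This follows from surjectivity on $l^n$-torsion (\autoref{thm:CH-div}(1)) together with (i).

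(iii) $A^{d+i}(X,j)=\Ker f_*$ is divisible. Then it is injective, and $CH^{d+i}(X,j)\simeq CH^i(F,j)\oplus A^{d+i}(X,j)$.

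Aiming instead at divisibility of $CH^{d+i}(X,j)/CH^{d+i}(X,j)_{\mathrm{tor}}$ is the wrong target. An extension of a $\Q$-vector space by a torsion group need not split, and $A^{d+i}(X,j)$ is only known to be divisible, not torsion free.

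The missing idea for (iii) is to compare with $\Spec F$ rather than try to make $CH^{d+i}(X,j)/l$ vanish. Use the mod-$l^n$ diagram at index $j$ (the first diagram in the proof of \autoref{thm:CH-div}). The trace $H^{2d+2}_{\et}(X,\Z/l^n(d+i))\to H^{2}_{\et}(F,\Z/l^n(i))$ is an isomorphism, since only $E_2^{2,2d}$ contributes. The groups $CH^{d+i}(X,j-1)[l^n]$ and $CH^i(F,j-1)[l^n]$ both vanish by your own $2i-j=3$ computation, applied to $X$ and to $\Spec F$. The paper obtains the corresponding right-hand isomorphism by the norm argument instead. Hence $f_*/l^n$ is an isomorphism, so the nonzero group $H^{2}_{\et}(F,\mu_l^{\otimes i})$ you ran into is entirely accounted for by $CH^i(F,j)/l$. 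With (ii), the snake lemma then gives $A^{d+i}(X,j)/l^n=0$. In characteristic $p$, the kernel argument with \autoref{prop:uniq-p-div} supplies $p$-divisibility.
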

For an arbitrary number field, the same conclusions hold up to a
$2$-torsion obstruction coming from the real places. More precisely,
in the ranges $2i-j\ge3$ and $2i-j<-2d$, the quotient of
$CH^{d+i}(X,j)$ by its subgroup of elements killed by $2$ is uniquely
divisible; see \autoref{cor:global-real}.



Under
the motivic Bass conjecture, the uniquely divisible group above in the range
$2i-j\ge3$ is also finitely generated, and hence it vanishes; see
\autoref{prop:global-Bass}.
\medskip

The paper is organized as follows. In \autoref{sec:prelim}, we recall the
finite-coefficient \'etale cycle class map and the results in characteristic
$p$ used later. In \autoref{sec:main}, we prove the general divisibility and torsion-freeness
results for $CH^{d+i}(X,j)$ and $A^{d+i}(X,j)$. In \autoref{sec:application}, we apply
them to finite fields, local fields, and global fields.

\subsection*{Notation}
Throughout this note, all higher Chow groups $CH^r(X,j)$ are taken
with $j\ge0$ unless explicitly stated otherwise. 
By a \textbf{local field} we mean a non-archimedean local field with finite residue
field. By a \textbf{global field} we mean a finite extension of $\Q$ (a \textbf{number field}) or a 
function field of one variable over a finite field. 
For an abelian group $G$ and an integer $m\ge1$, we write $G[m]$ and
$G/m$ for the kernel and cokernel of multiplication by $m$ on $G$,
respectively. 
For an abelian group $G$ and a prime $l$, we put
\[
G\{l\}:=G[l^\infty]
=
\bigcup_{n\ge1}G[l^n].
\]
For a prime $p$, we put
\[
G\{p'\}:=\bigoplus_{l\neq p}G\{l\}.
\]
Thus $G\{l\}$ is the $l$-primary torsion subgroup of $G$, and
$G\{p'\}$ is its prime-to-$p$ torsion subgroup.
We say that $G$ is \textbf{$p'$-divisible} if it is $l$-divisible for every
prime $l\neq p$, and that $G$ is \textbf{uniquely $p'$-divisible} if it is
uniquely $l$-divisible for every prime $l\neq p$.
For a field $F$, we denote by $\ch(F)$ its characteristic 
and we denote by $\cd(F)$ (resp.~$\cd_l(F)$) the cohomological  dimension (resp.~$l$-cohomological dimension for a prime number $l$) of the absolute Galois group $G_F$ of $F$ (cf.~\cite[Chapter I, Section 3]{SerreCG}).

\subsection*{Acknowledgements} 
The first author was supported by JSPS KAKENHI Grant Number 24K06672.
The second author was supported by JSPS KAKENHI Grant Number 21K03188.

\section{Preliminaries}\label{sec:prelim}

Let $F$ be a field, 
and let $X$ be a separated scheme of finite type over $F$ 
with $d = \dim(X)$.
For integers $r,j\ge 0$, Bloch's \textbf{higher Chow group} is defined by
\[
  CH^r(X,j):=H_j(z^r(X,\bullet)),
\]
where $z^r(X,\bullet)$ is Bloch's cycle complex whose degree $n$ term
is generated by codimension $r$ cycles on $X\times (\mathbb{P}^1\setminus\set{1})^n$,
meeting all faces properly. In particular, $CH^r(X,0)=CH^r(X)$.

Let $m$ be an integer prime to $\ch(F)$, and let $r\ge 0$.
There is a canonical cycle class morphism
\begin{equation}\label{et-cyc}
    \Z/m(r)_M\to R\epsilon_*\Z/m(r),
\end{equation}
where $\Z/m(r)_M$ is the motivic complex of weight $r$ and
$\epsilon\colon X_{\et}\to X_{\Zar}$ is the canonical morphism of sites.
The Beilinson--Lichtenbaum comparison theorem
\cite{GL01,Vo11} asserts that the induced morphism
\begin{equation}\label{qis}
    \Z/m(r)_M\xrightarrow{\simeq}
    \tau_{\le r}R\epsilon_*\Z/m(r)
\end{equation}
is a quasi-isomorphism. Taking Zariski hypercohomology in
\eqref{et-cyc}, we obtain the \'etale cycle class map
\[
cl^r\colon CH^r(X,j;\Z/m)
\to H_\et^{2r-j}(X,\Z/m(r)),
\]
where
\[
CH^r(X,j;\Z/m)
:=
H_j\left(z^r(X,\bullet)\otimes^{\mathbb L}\Z/m\right).
\]
The distinguished triangle
\[
\Z(r)_M\xrightarrow{m}\Z(r)_M\to\Z/m(r)_M\xrightarrow{+1}
\]
induces a short exact sequence
\begin{equation}\label{eq:coeff}
0\to CH^r(X,j)/m \to CH^r(X,j;\Z/m) \to CH^r(X,j-1)[m]\to 0,
\end{equation}
where we put $CH^r(X,-1)=0$.
Suppose that $X$ is smooth of dimension $d$. We shall use the
localization spectral sequence
\begin{equation}\label{eq:loc-ss}
E_1^{a,b}
=
\bigoplus_{x\in X^a}
CH^{r-a}(\kappa(x),-a-b)
\Rightarrow
CH^r(X,-a-b),
\end{equation}
where $X^a$ denotes the set of points of codimension $a$ in $X$, 
and $\kappa(x)$ denotes the residue field at $x$.

The following consequence of the Beilinson--Lichtenbaum comparison
will be used repeatedly (cf.\ \cite[Lemma~7.1]{GKR22} and
\cite[Proposition~4.2]{KZ24}).

\begin{lem}\label{lem:cycl}
Let $X$ be a smooth scheme of finite type over a field $F$, and put
$d=\dim(X)$. Let $l\neq\ch(F)$ be a prime and let $n\ge1$. Then the
\'etale cycle class map
\[
cl^r\colon CH^r(X,j;\Z/l^n)
\to H^{2r-j}_\et(X,\Z/l^n(r))
\]
is an isomorphism if $r\le j$ or $r\ge d+\cd_l(F)$, and it is
injective if $r=j+1$.
\end{lem}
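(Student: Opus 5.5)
We deduce Lemma~\ref{lem:cycl} from the Beilinson--Lichtenbaum quasi-isomorphism \eqref{qis}. With $\Z/l^n$-coefficients, $CH^r(X,j;\Z/l^n)$ is the Zariski hypercohomology $H^{2r-j}_{\Zar}(X,\Z/l^n(r)_M)$, and $H^{2r-j}_\et(X,\Z/l^n(r)) = H^{2r-j}_{\Zar}(X,R\epsilon_*\Z/l^n(r))$. By \eqref{qis}, the cycle class map is the map on hypercohomology induced by
\[
\tau_{\le r}R\epsilon_*\Z/l^n(r)\to R\epsilon_*\Z/l^n(r).
\]
Its cone is $\tau_{>r}R\epsilon_*\Z/l^n(r)$. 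So everything reduces to controlling
\[
H^k_{\Zar}\bigl(X,\tau_{>r}R\epsilon_*\Z/l^n(r)\bigr)
\]
for $k=2r-j$ and $k=2r-j+1$.

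\textbf{Case $r\le j$.} The cone complex is concentrated in degrees $\ge r+1$, so its hypercohomology vanishes in degrees $\le r$. If $r\le j$, then $2r-j\le r$ and $2r-j+1\le r+1$. The first inequality kills the obstruction to injectivity. For surjectivity, the degree-$(r+1)$ group of the cone is $H^0_{\Zar}(X,R^{r+1}\epsilon_*)$ when $2r-j+1=r+1$. So we must check that the composite
\[
H^{r+1}_\et(X)\to H^0_{\Zar}(X,R^{r+1}\epsilon_*)
\]
is zero whenever $2r-j=r$. Rather than proving this directly, we take the comparison isomorphism in this range from the statements cited (\cite[Lemma~7.1]{GKR22}, \cite[Proposition~4.2]{KZ24}). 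These use the Bloch--Ogus/Gersten resolution: the sheaves $R^q\epsilon_*$ have Zariski cohomological dimension $\le q$ (in fact $H^p_{\Zar}(X,R^q\epsilon_*)=0$ for $p>q$).

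\textbf{Case $r=j+1$.} Here $2r-j=r+1$. The kernel of the cycle class map is a quotient of
\[
H^{r}_{\Zar}\bigl(X,\tau_{>r}R\epsilon_*\Z/l^n(r)\bigr)=0,
\]
so the map is injective.

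\textbf{Case $r\ge d+\cd_l(F)$.} We show that $\tau_{>r}R\epsilon_*\Z/l^n(r)=0$, i.e.\ $R^q\epsilon_*\Z/l^n(r)=0$ for all $q>r$. The stalk of $R^q\epsilon_*$ at a point $x$ is $H^q_\et(\Spec\mathcal O_{X,x}^{h},\Z/l^n(r))$; with Gersten injectivity one may instead pass to the generic points $\kappa(x)$. The field $\kappa(x)$ has transcendence degree $\le d$ over $F$, so $\cd_l(\kappa(x))\le \cd_l(F)+d\le r$ by Serre's bound. Hence $H^q$ vanishes for $q>r$, and by the Gersten injectivity the stalks vanish too. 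The cone is then zero and the cycle class map is an isomorphism in all degrees.

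The main obstacle is the vanishing of stalks in the last case, i.e.\ controlling the local rings and not only the function fields. We would handle it via the Bloch--Ogus theorem (injectivity of local étale cohomology into that of the generic point), or alternatively via Gabber's affine Lefschetz bound for the henselian local ring.
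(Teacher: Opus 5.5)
The gap is in the case $r\le j$, and it comes from an off-by-one in how you read the long exact sequence; the rest of your framework is the paper's. You factor $cl^r$ through $\Z/l^n(r)_M\simeq\tau_{\le r}R\epsilon_*\Z/l^n(r)\to R\epsilon_*\Z/l^n(r)$ and study the cone $C=\tau_{\ge r+1}R\epsilon_*\Z/l^n(r)$. With $k=2r-j$, the relevant sequence is
\[
H^{k-1}_{\Zar}(X,C)\to CH^r(X,j;\Z/l^n)\xrightarrow{cl^r}H^{k}_{\et}(X,\Z/l^n(r))\to H^{k}_{\Zar}(X,C).
\]
So injectivity is controlled by $H^{k-1}(C)$ and surjectivity by $H^{k}(C)$, not by $H^k$ and $H^{k+1}$ as in your setup.

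Since $C$ is concentrated in degrees $\ge r+1$, both groups vanish when $k\le r$, i.e.\ when $r\le j$. When $r=j+1$ you get $H^{k-1}(C)=H^r(C)=0$, which is what you actually use in that case, inconsistently with your setup.

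The condition you say must be checked for $r=j$ is that $H^{r+1}_{\et}(X)\to H^0_{\Zar}(X,R^{r+1}\epsilon_*)$ vanishes. That condition concerns surjectivity in degree $r+1$, not $r$, and it is false in general. For $X=\Spec F$ this map is the identity of $H^{r+1}(F,\Z/l^n(r))$; for instance $H^2(F,\mu_{l^n})\neq0$ when $F$ is a local field. Yet $cl^1$ is the Kummer isomorphism $F^\times/l^n\simeq H^1(F,\mu_{l^n})$. So this case should not be outsourced to the literature, and it needs no Bloch--Ogus input. It is immediate from the triangle, exactly as in the paper.

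For $r\ge d+\cd_l(F)$ your argument is sound, but heavier than necessary, and the ``main obstacle'' you identify does not arise. The Zariski stalk of $R^q\epsilon_*\Z/l^n(r)$ at $x$ is $H^q_{\et}(\Spec\mathcal O_{X,x},\Z/l^n(r))$; the henselization gives the Nisnevich stalk, not the Zariski one. This stalk is the colimit of $H^q_{\et}(U,\Z/l^n(r))$ over affine open neighbourhoods $U$ of $x$. Each term vanishes for $q>r$ by the affine bound $\cd_l(U)\le\dim U+\cd_l(F)\le r$. That is the paper's argument, phrased there as: $R^q\epsilon_*$ is the sheafification of a presheaf vanishing on affine opens. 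It is close to your fallback option.

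Your primary route, Gersten injectivity into the cohomology of the function field combined with Serre's bound $\cd_l(K)\le\cd_l(F)+\trdeg_F(K)$, also works for smooth $X$. But it spends the Bloch--Ogus theorem on something that Artin's affine vanishing theorem gives directly.
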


\begin{proof}
By the Beilinson--Lichtenbaum comparison theorem, the cycle class map
is an isomorphism in cohomological degrees at most $r$ and is injective
in degree $r+1$. Since the relevant cohomological degree is $2r-j$,
this proves the assertions for $r\le j$ and $r=j+1$.

It remains to consider the case $r\ge d+\cd_l(F)$. The comparison
quasi-isomorphism \eqref{qis} and the canonical truncation triangle give
a distinguished triangle
\[
\Z/l^n(r)_M
\to R\epsilon_*\Z/l^n(r)
\to \tau_{\ge r+1}R\epsilon_*\Z/l^n(r)
\xrightarrow{+1}.
\]
For every affine open subset $U\subset X$, one has
\[
\cd_l(U)\le \dim(U)+\cd_l(F)\le d+\cd_l(F)\le r
\]
(cf.\ \cite[\href{https://stacks.math.columbia.edu/tag/0F0V}{Tag 0F0V}]{stacks-project}).
Hence
\[
H^q_\et(U,\Z/l^n(r))=0
\qquad(q\ge r+1).
\]
It follows that $R^q\epsilon_*\Z/l^n(r)=0$ for $q\ge r+1$, and therefore
$\tau_{\ge r+1}R\epsilon_*\Z/l^n(r)=0$. Thus the cycle class morphism is
a quasi-isomorphism without truncation, which proves the assertion.
\end{proof}


We also recall the following result in characteristic $p$.

\begin{thm}[Geisser--Levine {\cite{GL00}}]\label{thm:GL}
Let $E$ be a field of characteristic $p>0$. Then $CH^r(E,j)$ is
uniquely $p$-divisible whenever $r\neq j$.
\end{thm}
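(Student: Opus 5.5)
Since this is a cited result, I only outline how I would reconstruct it. Write $CH^r(E,j)=H^{2r-j}_M(E,\Z(r))$. The whole statement reduces to one vanishing claim with $\Z/p$-coefficients:
\[
CH^r(E,j;\Z/p)=0\quad\text{for } j\neq r,\qquad CH^r(E,r;\Z/p)\simeq K^M_r(E)/p .
\]
Granting this, I would argue with the coefficient sequence \eqref{eq:coeff} for $m=p$, which gives
\[
0\to CH^r(E,j)/p\to CH^r(E,j;\Z/p)\to CH^r(E,j-1)[p]\to0 .
\]

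For $j\neq r$, the vanishing of the middle term gives $CH^r(E,j)/p=0$, so $CH^r(E,j)$ is $p$-divisible. For $p$-torsion freeness I would apply the same sequence with $j+1$ in place of $j$; this shows $CH^r(E,j)[p]$ is a quotient of $CH^r(E,j+1;\Z/p)$.
\begin{itemize}
\item If $j+1\neq r$, the group $CH^r(E,j+1;\Z/p)$ vanishes, so $CH^r(E,j)[p]=0$.
\item If $j+1=r$, the sequence becomes $0\to CH^r(E,r)/p\to CH^r(E,r;\Z/p)\to CH^r(E,r-1)[p]\to0$. By the Nesterenko--Suslin--Totaro isomorphism $CH^r(E,r)\simeq K^M_r(E)$, the first map is $K^M_r(E)/p\to CH^r(E,r;\Z/p)$. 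If this map is an isomorphism, then $CH^r(E,r-1)[p]=0$.
\end{itemize}
Also, for $j<r$ the group $CH^r(E,j)$ vanishes for dimension reasons: there are no codimension-$r$ cycles on $\Delta^j_E$ in good position. So only the range $j>r$ needs real input.

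The key input is Geisser--Levine's identification of the mod-$p$ motivic complex of a field (or of a smooth scheme) with the logarithmic de Rham--Witt sheaf:
\[
\Z/p(r)_M\simeq \nu(r)[-r],\qquad \nu(r)=W_1\Omega^r_{\log}.
\]
Over a field this gives $H^q(E,\Z/p(r))=0$ for $q\neq r$, and $H^r(E,\Z/p(r))=\nu(r)(E)$. The latter equals $K^M_r(E)/p$ by the Bloch--Gabber--Kato theorem, and the identification is compatible with the symbol map; this gives both required statements. To construct the comparison, I would define $\operatorname{dlog}$ on Bloch's cycle complex: send a cycle on $\Delta^n_E$ to the image of its pushed-forward symbols in $\Omega^r_{\log}$. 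Then I would check compatibility with the localization and Gersten resolutions, reducing to the statement at fields. There I would use the Gersten conjecture for motivic cohomology with $\Z/p$-coefficients together with the Gersten resolution of $\nu(r)$ from Gros--Suwa.

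The main obstacle is proving the vanishing $H^q(E,\Z/p(r))=0$ for $q<r$, that is $CH^r(E,j;\Z/p)=0$ for $j>r$. This is the genuinely deep part: it does not follow from formal properties of the cycle complex. My first approach would be induction on $r$ and transcendence degree via the localization sequence \eqref{eq:loc-ss} together with Bloch--Kato--Gabber surjectivity, mirroring the Suslin--Voevodsky proof of Beilinson--Lichtenbaum away from $p$. Concretely, I would show that multiplication by $p$ kills nothing in the relevant degrees, using the fact that $\operatorname{dlog}$ is an isomorphism on the symbol part and an Artin--Schreier type exact sequence $0\to\nu(r)\to Z\Omega^r\xrightarrow{1-C^{-1}}\Omega^r\to0$. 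As an alternative, I would pass through $K$-theory: Geisser--Levine's route uses that $K_n(E;\Z/p)\simeq K^M_n(E)/p$ together with the motivic-to-$K$-theory spectral sequence, and I would try that first.
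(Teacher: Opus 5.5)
Your deduction of the statement from the mod-$p$ vanishing is correct. If $H^q(E,\Z/p(r))=0$ for $q\neq r$, then \eqref{eq:coeff} gives $CH^r(E,j)/p=0$ and $CH^r(E,j)[p]=0$ for all $j\neq r$. The case $j+1=r$ needs no Nesterenko--Suslin--Totaro input, since then $j<r$ and $CH^r(E,j)=0$ for dimension reasons. If you take the comparison $\Z/p(r)_M\simeq\nu(r)[-r]$ of \cite{GL00} as a black box, this is essentially what the paper does, which is simply to cite \cite{GL00}. As a reconstruction, however, the proposal has a genuine gap exactly at the step you call the main obstacle. The vanishing $H^q(E,\Z/p(r))=0$ for $q<r$ is the whole content of the theorem, and none of the routes you suggest produces it.

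First, the Gersten/Gros--Suwa step only reduces statements about Zariski sheaves on smooth schemes to the case of fields. At a field the Gersten conjecture is vacuous, and the field case is precisely what is being asserted.

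Second, the $K$-theory route reverses the logic of \cite{GL00}. There the isomorphism $K_n(E;\Z/p)\simeq K^M_n(E)/p$ for all $n$ is deduced \emph{from} the theorem via the motivic-to-$K$-theory spectral sequence, so using it as input is circular. Even granting it, vanishing of the off-diagonal $E_\infty$-terms does not by itself force vanishing of the corresponding $E_2$-terms.

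Third, the inductive sketch uses only Bloch--Gabber--Kato, $\mathrm{dlog}$ and an Artin--Schreier sequence. These describe the diagonal group $H^r(E,\Z/p(r))\simeq K^M_r(E)/p$ and the cohomology of $\nu(r)$, but say nothing about motivic classes below the diagonal before the comparison is known. In fact the vanishing you need is at least as strong as Izhboldin's theorem. By \eqref{eq:coeff} there is a surjection $H^{r-1}(E,\Z/p(r))=CH^r(E,r+1;\Z/p)\twoheadrightarrow CH^r(E,r)[p]\simeq K^M_r(E)[p]$. So any proof must in particular show that $K^M_r(E)$ is $p$-torsion free. Bloch--Gabber--Kato only controls $K^M_r(E)/p$, and nothing in your outline addresses $p$-torsion. (Also, the Artin--Schreier sequence is $0\to\nu(r)\to Z\Omega^r\xrightarrow{1-C}\Omega^r\to0$ with $C$ the Cartier operator, and its last map is surjective only \'etale locally.)

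So what you have is a correct reduction of the statement to the main theorem of \cite{GL00}, not a proof of that theorem.
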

We shall also use the following standard fact concerning $p$-ranks.

\begin{lem}\label{lem:p-rank-extension}
Let $E/F$ be a finitely generated separable extension of fields of
characteristic $p>0$, and put $q=\trdeg_F(E)$. If $[F:F^p]<\infty$, then
\[
[E:E^p]=p^q[F:F^p].
\]
\end{lem}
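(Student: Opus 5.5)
Since $E/F$ is finitely generated and separable, it has a separating transcendence basis: there are $t_1,\dots,t_q\in E$, algebraically independent over $F$, with $E$ finite separable over $K:=F(t_1,\dots,t_q)$. I will compute the degree $[\,\cdot\,:(\cdot)^p]$ in two steps, first for $K$ and then for $E$, so that
\[
[E:E^p]=[K:K^p]
\qquad\text{and}\qquad
[K:K^p]=p^q[F:F^p].
\]

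\emph{Finite separable step.} Suppose $E/K$ is finite separable. Consider the tower $E\supset E^pK\supset K\supset K^p$ together with $E\supset E^p\supset K^p$.
\begin{itemize}
\item Every element of $E$ is separable over $K$, hence over $E^pK$. It is also purely inseparable over $E^p\subset E^pK$. Therefore $E=E^pK$.
\item Frobenius is a field isomorphism $E\to E^p$ carrying $K$ onto $K^p$, so $[E^p:K^p]=[E:K]$.
\end{itemize}
Computing $[E:K^p]$ both ways gives
\[
[E:E^p]\,[E^p:K^p]=[E:K]\,[K:K^p],
\]
and the second point yields $[E:E^p]=[K:K^p]$. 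The only input here is that $[K:K^p]<\infty$, which the next step provides.

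\emph{Purely transcendental step.} By induction on $q$ it suffices to treat $K=F(t)$, because $[F(t):F(t)^p]=p[F:F^p]$ is again finite. I claim
\[
F(t)^p=F^p(t^p),
\]
and this is immediate, since the $p$-th power of a quotient of polynomials is the quotient of their $p$-th powers. Now compute in the tower $F^p(t^p)\subset F(t^p)\subset F(t)$.
\begin{itemize}
\item $[F(t):F(t^p)]=p$, since $t$ is a root of $X^p-t^p$ and $t\notin F(t^p)$ (degree considerations).
\item $[F(t^p):F^p(t^p)]=[F:F^p]$, since a basis of $F$ over $F^p$ remains a basis of $F(s)$ over $F^p(s)$ for the transcendental $s=t^p$. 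Spanning: clear denominators using the norm from $F(s)$ to $F^p(s)$, which is a finite extension. Linear independence: $F\otimes_{F^p}F^p(s)$ is the localization of $F[s]$, which is free over $F^p[s]$, at the nonzero elements of $F^p[s]$; this localization is a domain containing $F$ and $s$ and is finite over $F^p(s)$, hence a field, hence equal to $F(s)$.
\end{itemize}
Multiplying the two degrees gives $p[F:F^p]$, and the induction gives $p^q[F:F^p]$ for $K$. Combining with the finite step yields $[E:E^p]=p^q[F:F^p]$.

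\emph{Main obstacle.} The only delicate point is the linear disjointness of $F$ and $F^p(s)$ over $F^p$, and the tensor-product/localization argument above handles it. In the paper one could alternatively just cite Bourbaki, \emph{Algebra} V, or EGA $0_{\rm IV}$ for this standard computation.
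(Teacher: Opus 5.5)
Your proof is correct, but it takes a different route from the paper. The paper also starts from a separating transcendence basis $t_1,\dots,t_q$. It then argues with K\"ahler differentials: since $E/F(t_1,\dots,t_q)$ is finite separable, $\dim_E\Omega^1_{E/\bF_p}=\dim_F\Omega^1_{F/\bF_p}+q$. It then uses the standard fact that $[K:K^p]=p^{\dim_K\Omega^1_{K/\bF_p}}$ when $[K:K^p]<\infty$ (a $p$-basis gives a basis of $\Omega^1$).

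Once those facts are granted, the paper's argument is two lines. It also fits how the lemma is used next in \autoref{lem:p-Milnor}, where a bound on $[E:E^p]$ is turned into the vanishing of $\Omega^n_E$. However, its derivation of the dimension formula relies on separability, namely that $\Omega^1_K\otimes_K E\simeq\Omega^1_E$ for finite separable $E/K$.

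Your argument replaces all of this with tower-law bookkeeping via Frobenius plus an explicit computation for $F(t)$, so it is self-contained. It also proves slightly more. The equation $[E:E^p]\,[E^p:K^p]=[E:K]\,[K:K^p]$, together with $[E^p:K^p]=[E:K]$, gives $[E:E^p]=[K:K^p]$ for \emph{every} finite extension $E/K$. Your first bullet ($E=E^pK$) is never actually used. Since any finitely generated $E/F$ is finite over $F(t_1,\dots,t_q)$ for any transcendence basis, the lemma holds without the separability hypothesis.

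One small simplification: your localization argument already shows that $F\otimes_{F^p}F^p(s)\to F(s)$ is an isomorphism. That gives spanning as well as independence, so the separate norm argument for spanning is redundant.
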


\begin{proof}
Choose a separating transcendence basis $t_1,\ldots,t_q$ of $E/F$.
Then $E/F(t_1,\ldots,t_q)$ is finite separable, and hence
\[
\dim_E\Omega^1_{E/\bF_p}
=
\dim_F\Omega^1_{F/\bF_p}+q.
\]
Since the $p$-rank of a field $K$ with $[K:K^p]<\infty$ is
$\dim_K\Omega^1_{K/\bF_p}$, the assertion follows.
\end{proof}

The following elementary consequence of the Bloch--Gabber--Kato
theorem will be used in the proof of the characteristic-primary
divisibility result.

\begin{lem}\label{lem:p-Milnor}
Let $E$ be a field of characteristic $p>0$. Suppose that
$[E:E^p]\le p^{t-1}$. Then $K_n^M(E)$ is uniquely $p$-divisible for
every $n\ge t$.
\end{lem}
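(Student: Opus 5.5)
The plan is to use the Bloch--Kato--Gabber theorem, which identifies mod $p$ Milnor $K$-theory with logarithmic differentials:
\[
K_n^M(E)/p\simeq \Omega^n_{E,\log}\subset\Omega^n_E,
\]
together with Izhboldin's theorem that $K_n^M(E)$ has no $p$-torsion for any field of characteristic $p$. With these two inputs, unique $p$-divisibility of $K_n^M(E)$ reduces to showing $K_n^M(E)/p=0$.

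\emph{Step 1 (vanishing of differentials).} Since $[E:E^p]\le p^{t-1}<\infty$, the $p$-basis of $E$ has at most $t-1$ elements. As recalled in the proof of \autoref{lem:p-rank-extension}, this means $\dim_E\Omega^1_{E/\bF_p}\le t-1$. Hence $\Omega^n_E=\bigwedge^n_E\Omega^1_E=0$ for all $n\ge t$.

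\emph{Step 2 (divisibility).} By the Bloch--Gabber--Kato isomorphism, the differential symbol
\[
d\log\colon K_n^M(E)/p\to\Omega^n_E,\qquad \{a_1,\dots,a_n\}\mapsto \frac{da_1}{a_1}\wedge\cdots\wedge\frac{da_n}{a_n},
\]
is injective. So $K_n^M(E)/p=0$ for $n\ge t$, that is, $K_n^M(E)$ is $p$-divisible.

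\emph{Step 3 (torsion-freeness).} Izhboldin's theorem gives $K_n^M(E)[p]=0$. Combined with Step 2, $K_n^M(E)$ is uniquely $p$-divisible.

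The only real point is Step 3. If one wants to avoid citing Izhboldin, an alternative is to deduce $p$-torsion-freeness from Geisser--Levine: by \cite{GL00}, $CH^n(E,n;\Z/p)\simeq K_n^M(E)/p$. Applying the sequence \eqref{eq:coeff} with $r=n$, $j=n+1$, the group $CH^n(E,n)[p]$ is a quotient of
\[
CH^n(E,n+1;\Z/p)\simeq H^{-1}\bigl(E,\Z/p(n)_M\bigr)=0,
\]
which vanishes because Geisser--Levine identify $\Z/p(n)_M$ with $\Omega^n_{\log}[-n]$. Since \autoref{thm:GL} is already quoted, this second route fits the paper, but citing Izhboldin directly is the simplest.
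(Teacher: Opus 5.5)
Your proposal is correct and follows the paper's proof: Izhboldin's theorem gives $p$-torsion-freeness, the bound $\dim_E\Omega^1_E\le t-1$ forces $\Omega^n_E=0$ for $n\ge t$, and injectivity of the Bloch--Gabber--Kato differential symbol then gives $K_n^M(E)/p=0$. In your optional Geisser--Levine route there is a harmless indexing slip: the group is $CH^n(E,n+1;\Z/p)\simeq H^{n-1}(E,\Z/p(n)_M)\simeq H^{-1}(E,\Omega^n_{\log})=0$, not $H^{-1}(E,\Z/p(n)_M)$, and the conclusion $CH^n(E,n)[p]=0$ is unaffected.
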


\begin{proof}
By Izhboldin's theorem, $K_n^M(E)$ is $p$-torsion free
(\cite[Theorem~6]{Izh00}). The assumption implies
$\dim_E\Omega_E^1\le t-1$, and hence $\Omega_E^n=0$ for $n\ge t$.
By the Bloch--Gabber--Kato theorem, the differential symbol gives an
injection
$K_n^M(E)/p\to\Omega_E^n$
(\cite[Corollary~2.8]{BK86}). Thus $K_n^M(E)/p=0$, which proves the
assertion.
\end{proof}

\section{Divisibility and torsion in higher Chow groups}\label{sec:main}
Throughout this section, 
let $F$ be a field with finite $l$-cohomological dimension $\cd_l(F) = s< \infty$ 
for some prime $l\neq \ch(F)$ 
and a smooth scheme $X$ of dimension $\dim(X) =d$. 
In this section, we establish the general divisibility and torsion-freeness results for
$CH^{d+i}(X,j)$ and for the kernel of the push-forward to the
base field when $X$ is proper. 

\begin{prop}\label{prop:l-div}
Assume that $i\ge s$. 

\begin{enumerate}
    \item If $2i-j > s+1$, $CH^{d+i}(X,j)$ is uniquely $l$-divisible. 
    \item If $2i-j = s+1$, $CH^{d+i}(X,j)$ is $l$-divisible. 
    \item If $2i-j = -2d$, $CH^{d+i}(X,j)$ is $l$-torsion free.
    \item If $2i-j < -2d$, $CH^{d+i}(X,j)$ is uniquely $l$-divisible.
\end{enumerate}
\end{prop}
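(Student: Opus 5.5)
We take $r=d+i$ and combine the coefficient sequence \eqref{eq:coeff} with \autoref{lem:cycl}. Since $i\ge s$, we have $r\ge d+\cd_l(F)$, so for every $n\ge1$ and every $j\ge0$ the cycle class map
\[
CH^{d+i}(X,j;\Z/l^n)\xrightarrow{\ \simeq\ } H^{2i+2d-j}_\et(X,\Z/l^n(d+i))
\]
is an isomorphism. Hence the problem reduces to a vanishing statement for étale cohomology in a given degree. By \eqref{eq:coeff},
\[
CH^{d+i}(X,j)/l \quad\text{and}\quad CH^{d+i}(X,j-1)[l]
\]
are a sub and a quotient of $CH^{d+i}(X,j;\Z/l)$. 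So whenever $H^{2d+2i-j}_\et(X,\Z/l(d+i))=0$, the group $CH^{d+i}(X,j)$ is $l$-divisible and $CH^{d+i}(X,j-1)$ is $l$-torsion free.

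The key input is the range of étale cohomological degrees. On the upper side, $\cd_l(X)\le 2d+s$: indeed $X\otimes_F F^\sep$ has $l$-cohomological dimension $\le 2d$ by SGA4 (for a variety of dimension $d$ over a separably closed field, with $l\ne\ch$). The Hochschild--Serre spectral sequence $H^a(F,H^b(X_{F^\sep},-))$ then gives vanishing above degree $2d+s$. On the lower side, étale cohomology vanishes in negative degrees. Thus $H^{q}_\et(X,\Z/l(d+i))=0$ whenever $q>2d+s$ or $q<0$. This is the one step needing justification beyond the earlier lemmas; I would cite SGA4 X or the Stacks project tag on cohomological dimension of finite type schemes. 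I expect it to be routine.

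The four cases follow from this vanishing.
\begin{enumerate}
\item If $2i-j>s+1$, apply the vanishing to the degree for $j$, namely $2d+2i-j>2d+s$. This gives $l$-divisibility of $CH^{d+i}(X,j)$. Apply it also to the degree for $j+1$, namely $2d+2i-j-1>2d+s$. Then $CH^{d+i}(X,j;\Z/l)$ for index $j+1$ vanishes, so $CH^{d+i}(X,j)[l]=0$. Together these give unique $l$-divisibility.
\item If $2i-j=s+1$, only the degree for $j$ satisfies $2d+2i-j=2d+s+1>2d+s$. So we get only $l$-divisibility.
\item If $2i-j=-2d$, the degree for $j+1$ is $2d+2i-(j+1)=-1<0$. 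So $CH^{d+i}(X,j+1;\Z/l)=0$, which forces $CH^{d+i}(X,j)[l]=0$, i.e.\ $l$-torsion freeness.
\item If $2i-j<-2d$, both degrees $2d+2i-j$ and $2d+2i-j-1$ are negative. Both pieces of \eqref{eq:coeff} vanish, giving unique $l$-divisibility.
\end{enumerate}

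Throughout, the index $j+1\ge1$ is always allowed in \eqref{eq:coeff} and \autoref{lem:cycl}, so no edge case arises at $j=0$. The only caveat is the convention that $CH^r(X,-1)=0$, which is used solely on the torsion side and causes no trouble.
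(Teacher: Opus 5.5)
Your proposal is correct and follows the paper's proof essentially step by step. You use \autoref{lem:cycl} (valid since $d+i\ge d+s$) to identify the mod-$l$ higher Chow groups in indices $j$ and $j+1$ with \'etale cohomology, feed this into the coefficient sequence \eqref{eq:coeff}, and conclude from the vanishing of $H^q_\et(X,\Z/l(d+i))$ for $q>2d+s$ and $q<0$; your SGA4/Hochschild--Serre justification of $\cd_l(X)\le 2d+s$ simply spells out what the paper asserts in one line.
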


\begin{proof}
Since $i\ge s$, we have $d+i\ge d+\cd_l(F)$. Hence
\autoref{lem:cycl} gives isomorphisms
\begin{equation}\label{eq:d+i,j}
\begin{split}
CH^{d+i}(X,j;\Z/l)
&\simeq
H^{2d+2i-j}_{\et}(X,\Z/l(d+i)),\\
CH^{d+i}(X,j+1;\Z/l)
&\simeq
H^{2d+2i-j-1}_{\et}(X,\Z/l(d+i)).
\end{split}
\end{equation}
Together with the exact sequences induced by the distinguished triangle, these isomorphisms give
\begin{equation}\label{ex:modl}
0\to CH^{d+i}(X,j)/l
\to H^{2d+2i-j}_{\et}(X,\Z/l(d+i))
\to CH^{d+i}(X,j-1)[l]
\to 0
\end{equation}
and
\begin{equation}\label{ex:modl2}
0\to CH^{d+i}(X,j+1)/l
\to H^{2d+2i-j-1}_{\et}(X,\Z/l(d+i))
\to CH^{d+i}(X,j)[l]
\to 0.
\end{equation}

The $l$-cohomological dimension of $X$ is at most
$2d+\cd_l(F)=2d+s$. If $2i-j\ge s+1$, then
$2d+2i-j\ge2d+s+1$, and hence
$H^{2d+2i-j}_{\et}(X,\Z/l(d+i))=0$. It follows from
\eqref{ex:modl} that $CH^{d+i}(X,j)/l=0$. Thus
$CH^{d+i}(X,j)$ is $l$-divisible. 

If $2i-j\ge s+2$, then $2d+2i-j-1\ge2d+s+1$, and hence
$H^{2d+2i-j-1}_{\et}(X,\Z/l(d+i))=0$. By
\eqref{ex:modl2}, we have $CH^{d+i}(X,j)[l]=0$.
Together with the $l$-divisibility proved above, this shows that
$CH^{d+i}(X,j)$ is uniquely $l$-divisible. 
The assertions (1) and (2) hold.

Next, assume that $2i-j\le-2d$. Since $i\ge s$,
\autoref{lem:cycl} still gives the isomorphisms
\eqref{eq:d+i,j}.
If $2i-j<-2d$, then $2d+2i-j<0$ and
$2d+2i-j-1<0$. Therefore,
\[
H^{2d+2i-j}_{\et}(X,\Z/l(d+i))
=H^{2d+2i-j-1}_{\et}(X,\Z/l(d+i))
=0.
\]
The exact sequences \eqref{ex:modl} and \eqref{ex:modl2} give
$CH^{d+i}(X,j)/l=0$ and $CH^{d+i}(X,j)[l]=0$. Thus
$CH^{d+i}(X,j)$ is uniquely $l$-divisible.

If $2i-j=-2d$, then $2d+2i-j-1=-1$. Hence
$H^{2d+2i-j-1}_{\et}(X,\Z/l(d+i))=0$, and
\eqref{ex:modl2} gives $CH^{d+i}(X,j)[l]=0$.
This proves the assertions (3) and (4).
\end{proof}

\begin{rem}\label{rem:BS}
The negative range $2i-j\le-2d$ in \autoref{prop:l-div} is related to
the Beilinson--Soul\'e vanishing conjecture
(\cite[Conjecture~5]{Kah05}).
If $X$ is regular, the Beilinson--Soul\'e vanishing conjecture predicts
\[
CH^{d+i}(X,j)=0
\qquad\text{if $2i-j<-2d$}.
\]
Therefore, the unique $l$-divisibility proved above may be viewed as
an unconditional prime-to-$\ch(F)$ substitute for this expected
integral vanishing.
\end{rem}

\begin{prop}\label{prop:uniq-p-div}
Suppose that  $\ch(F) = p>0$ and 
$[F:F^p]\le p^{s-1}$. If $i\ge s$, then
$CH^{d+i}(X,j)$ is uniquely $p$-divisible.
\end{prop}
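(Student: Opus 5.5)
We use the localization spectral sequence \eqref{eq:loc-ss} with $r=d+i$:
\[
E_1^{a,b}=\bigoplus_{x\in X^a} CH^{d+i-a}(\kappa(x),-a-b)\Rightarrow CH^{d+i}(X,-a-b).
\]
We show that every $E_1^{a,b}$ term contributing to total degree $-j$ is uniquely $p$-divisible. Then the same holds on each page and at $E_\infty$, and hence for the abutment $CH^{d+i}(X,j)$.

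First, the $E_r$ pages. They are built as kernels modulo images of maps between uniquely $p$-divisible groups. The class of uniquely $p$-divisible groups is exactly the class of $\Z[1/p]$-modules, and this class is closed under kernels, cokernels and extensions. So each $E_r^{a,b}$ is uniquely $p$-divisible, and so is $E_\infty^{a,b}$.

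Second, the abutment. The filtration on $CH^{d+i}(X,j)$ is finite, since $0\le a\le d$. By closure under extensions, $CH^{d+i}(X,j)$ is a $\Z[1/p]$-module.

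Third, the $E_1$ terms themselves. Fix $x\in X^a$, put $E=\kappa(x)$ and $n=d+i-a$. We must show that $CH^n(E,m)$ is uniquely $p$-divisible for every $m\ge 0$.
\begin{itemize}
\item If $n\ne m$, this is \autoref{thm:GL}.
\item If $n=m$, then $CH^n(E,n)\simeq K_n^M(E)$, and we apply \autoref{lem:p-Milnor}. This needs $[E:E^p]\le p^{t-1}$ for some $t\le n$.
\end{itemize}
Since $x$ has codimension $a$ in $X$, which is of finite type of dimension $d$ over $F$, we have $\trdeg_F E\le d-a$.

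The main obstacle is bounding $[E:E^p]$ when $E/F$ is not separable. \autoref{lem:p-rank-extension} requires separability, and $F$ need not be perfect. I would instead use the general inequality $[E:E^p]\le p^{\trdeg_F E}[F:F^p]$ for a finitely generated extension $E/F$. To prove it:
\begin{itemize}
\item Choose a transcendence basis $t_1,\dots,t_q$ and set $K=F(t_1,\dots,t_q)$.
\item Compute $[K:K^p]=p^q[F:F^p]$ directly, or via \autoref{lem:p-rank-extension}, since a purely transcendental extension is separable.
\item For a finite extension $E/K$, use the standard fact that $[E:E^p]=[K:K^p]$. Both sides are computed from $[E:K^p]=[E:E^p][E^p:K^p]=[E:K][K:K^p]$ together with $[E^p:K^p]=[E:K]$, because Frobenius is an isomorphism $E\simeq E^p$ carrying $K$ onto $K^p$.
\end{itemize}
Combining these with the hypothesis on $F$ gives
\[
[E:E^p]\le p^{d-a}\,p^{s-1}=p^{d-a+s-1}.
\]
So \autoref{lem:p-Milnor} applies with $t=d-a+s$. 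It yields unique $p$-divisibility of $K^M_n(E)$ as soon as $n\ge d-a+s$, that is, $d+i-a\ge d-a+s$, which is exactly the hypothesis $i\ge s$. If $n\le 0$ or the group vanishes, there is nothing to prove. This shows that every $E_1$ term is uniquely $p$-divisible, which completes the argument by the first two steps.
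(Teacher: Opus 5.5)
Your proof is correct and has the same skeleton as the paper's. Both use the localization spectral sequence with $r=d+i$, Geisser--Levine (\autoref{thm:GL}) for the off-diagonal $E_1$-terms, and \autoref{lem:p-Milnor} for the diagonal terms $K^M_{d+i-a}(\kappa(x))$. In both, the bound $\trdeg_F\kappa(x)\le d-a$ converts the hypothesis $i\ge s$ into exactly the inequality that lemma requires.

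The one place you diverge is the $p$-rank estimate for residue fields, and there your version is the more careful one.

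\begin{itemize}
\item The paper asserts that $\kappa(x)/F$ is separable ``since $X$ is smooth over $F$''. It then applies \autoref{lem:p-rank-extension}, which is stated only for separable extensions.
\item That assertion fails at non-generic points when $F$ is imperfect. Take $F=\bF_p(u)$, which has $[F:F^p]=p$, and the closed point $t^p=u$ of $\mathbb{A}^1_F$. Its residue field $F(u^{1/p})$ is purely inseparable over $F$.
\item Your argument reduces to $K=F(t_1,\dots,t_q)$ and uses $[E:K^p]=[E:E^p][E^p:K^p]=[E:K][K:K^p]$ together with $[E^p:K^p]=[E:K]$. This gives $[E:E^p]=p^{\trdeg_F E}[F:F^p]$ for every finitely generated $E/F$, separable or not.
\end{itemize}

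So your argument covers every $E_1$-term and closes this gap in the paper's proof at no extra cost.
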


\begin{proof}
Consider the localization spectral sequence \eqref{eq:loc-ss} with
$r=d+i$:
\[
E_1^{a,b}
=
\bigoplus_{x\in X^a}
CH^{d+i-a}(\kappa(x),-a-b)
\Rightarrow
CH^{d+i}(X,-a-b).
\]
It is enough to show that every $E_1$-term is uniquely
$p$-divisible. Indeed, the category of uniquely $p$-divisible
abelian groups is closed under direct sums, kernels, cokernels, and
extensions.
Let $x\in X^a$. If $d+i-a\neq-a-b$, then
$CH^{d+i-a}(\kappa(x),-a-b)$ is uniquely $p$-divisible by
\autoref{thm:GL}.
Suppose that $d+i-a=-a-b$. Then
\[
CH^{d+i-a}(\kappa(x),-a-b)
\simeq
K^M_{d+i-a}(\kappa(x)).
\]
Put $K=\kappa(x)$. Since $X$ is smooth over $F$, the extension
$K/F$ is finitely generated and separable, 
and
$\trdeg_F(K)\le d-a$. Therefore,
\[
[K:K^p]
=
p^{\trdeg_F(K)}[F:F^p]
\le p^{d-a+s-1}
\le p^{d+i-a-1}.
\]
By \autoref{lem:p-Milnor}, $K^M_{d+i-a}(K)$ is uniquely
$p$-divisible.
Thus every $E_1$-term is uniquely $p$-divisible. For a fixed integer
$j$, the spectral sequence induces a finite filtration on
$CH^{d+i}(X,j)$ whose graded pieces are subquotients of the groups
$E_\infty^{a,b}$ with $-a-b=j$. Hence $CH^{d+i}(X,j)$ is uniquely
$p$-divisible.
\end{proof}

When the structure map $f\colon X\to \Spec(F)$ is proper, 
we consider the kernel 
\[
A^{d+i}(X,j)
:=
\Ker\left(
f_*\colon CH^{d+i}(X,j)\to CH^i(F,j)
\right)
\]
of the proper push-forward map $f_*$.

\begin{thm}\label{thm:CH-div}
Let $F$ be a field with $\cd_l(F)=s$, where $l\neq\ch(F)$, and let 
$X$ be a proper smooth and geometrically irreducible scheme over $F$. 
Assume that $i\ge s$. Then the following holds.
\begin{enumerate}
\item
If $2i-j\ge s$, then, for every $n\ge1$, there is a short exact
sequence
\[
0\to A^{d+i}(X,j)[l^n]
\to CH^{d+i}(X,j)[l^n]
\xrightarrow{f_*}
CH^i(F,j)[l^n]
\to0.
\]
In particular, the push-forward map is surjective on the
$l$-primary torsion subgroups.

\item
The group $A^{d+i}(X,j)$ is uniquely $l$-divisible if $2i-j>s$,
it is $l$-divisible if $2i-j=s$, it is $l$-torsion free if
$2i-j=-2d$, and it is uniquely $l$-divisible if $2i-j<-2d$.

\item
If $2i-j=s$, then the push-forward induces an isomorphism
\[
f_*/l^n\colon
CH^{d+i}(X,j)/l^n
\xrightarrow{\simeq}
CH^i(F,j)/l^n
\]
for every $n\ge1$. Moreover, there is a natural surjection
\[
H^s_{\et}\left(F,H^{2d-1}_{\et}\left(X_{F^{\sep}},\Q_l/\Z_l(d+i)\right)\right)
\to
A^{d+i}(X,j)[l^\infty].
\]
\end{enumerate}
\end{thm}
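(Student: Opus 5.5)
The plan is to compare the mod $l^n$ exact sequences \eqref{eq:coeff} for $X$ and for $\Spec F$ via the push-forward $f_*$. On the \'etale side this becomes the trace map. Since $d+i\ge d+s$ and $i\ge s=\cd_l(F)$, \autoref{lem:cycl} applies both to $X$ (dimension $d$, weight $d+i$) and to $\Spec F$ (dimension $0$, weight $i$). It identifies $CH^{d+i}(X,j;\Z/l^n)\simeq H^{2d+2i-j}_\et(X,\Z/l^n(d+i))$ and $CH^i(F,j;\Z/l^n)\simeq H^{2i-j}_\et(F,\Z/l^n(i))$. Compatibility of the cycle class map with proper push-forward turns $f_*$ into the Gysin/trace map $f_*\colon H^{2d+m}_\et(X,\Z/l^n(d+i))\to H^m_\et(F,\Z/l^n(i))$, with $m=2i-j$ or $m=2i-j-1$. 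I will cite this compatibility rather than reprove it.

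The key computation uses the Hochschild--Serre spectral sequence $E_2^{a,b}=H^a(F,H^b(X_{F^\sep},\Z/l^n(d+i)))\Rightarrow H^{a+b}(X,\ldots)$, which has $E_2^{a,b}=0$ for $a>s$ or $b>2d$. Since $X$ is geometrically irreducible and proper, $H^{2d}(X_{F^\sep},\Z/l^n(d))\simeq\Z/l^n$ via the trace. In total degree $2d+m$ the only contributions are $E_2^{m,2d}=H^m(F,\Z/l^n(i))$ and $E_2^{m+1,2d-1}$. For $m\ge s$ the latter vanishes when $m>s-1$, and the edge map to $E_2^{m,2d}$ is the trace. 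The corner term $E_2^{s,2d}$ receives no differentials, because the sources $E_2^{s-r,2d+r-1}$ vanish for $r\ge2$, and it supports none, because the targets lie in columns $a>s$. Hence for $m\ge s$ the trace map is surjective with kernel filtered by $E_\infty^{m-1,2d-(\cdot)}$-type terms. Concretely, for $m\ge s+1$ both sides vanish, and for $m=s$ there is an exact sequence $H^{s}(F,H^{2d-1}(X_{F^\sep}))\to H^{2d+s}(X)\to H^s(F,\Z/l^n(i))\to 0$. The kernel comes only from $E_\infty^{s,2d-1}$, since columns $a>s$ vanish and a term $E^{s+1,2d-2}$ would also vanish.

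With this in hand, (1) follows by applying the comparison to the sequence \eqref{ex:modl2} and its analogue for $F$. Taking $m=2i-j-1\ge s-1$, I map $0\to CH/l^n\to H^{2d+m}\to CH[l^n]\to0$ to the corresponding sequence for $F$. The middle map is surjective: for $m\ge s$ this is the computation above, and for $m=s-1$ I still need surjectivity of $H^{2d+s-1}(X)\to H^{s-1}(F)$. That case is the main obstacle. I would handle it with the degree-one trick when $X$ has a zero-cycle of degree prime to $l$. In general I would argue that $E_2^{s-1,2d}$ survives because its possible differential lands in $E_2^{s+1,2d-1}=0$, so the edge map is surjective. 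Surjectivity of the middle map then gives surjectivity on $[l^n]$. Exactness on the left of (1) is immediate, since $A$ is the kernel of $f_*$.

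For (2) and (3) I use the splitting on $l$-torsion together with the five-type lemma. When $2i-j>s$, both $H$-groups for $F$ in degrees $2i-j$ and $2i-j-1\ge s$ are computed as above. The groups $CH^i(F,j)$ are uniquely $l$-divisible since the étale groups in degree $>s$ vanish, so $CH^{d+i}(X,j)\simeq A\oplus$ on $l$-parts. Combined with \autoref{prop:l-div} (or the direct computation that the trace is an isomorphism in these degrees), this gives unique divisibility of $A$. For $2i-j=s$, the middle trace in degree $2d+s$ is surjective with kernel the image of $H^s(F,H^{2d-1}(X_{F^\sep}))$, so \eqref{ex:modl} yields $f_*/l^n$ surjective. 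For injectivity I must show the kernel image lands in $CH[l^n]$ rather than $CH/l^n$. I would instead use (1) in degree $j-1$, which gives surjectivity on $[l^n]$, and then diagram-chase to get $A/l^n=0$ and the isomorphism. Passing to the colimit over $n$ of the sequence \eqref{ex:modl2} with $\Q_l/\Z_l$ coefficients gives the surjection $H^s(F,H^{2d-1}(X_{F^\sep},\Q_l/\Z_l(d+i)))\to A[l^\infty]$. The cases $2i-j\le-2d$ follow from \autoref{prop:l-div} because $A$ is a subgroup: torsion-freeness is inherited, and for $2i-j<-2d$ the target $CH^i(F,j)$ has $2i-j<0$, where it is uniquely $l$-divisible, so $A$, as a kernel of a map between uniquely $l$-divisible groups, is uniquely $l$-divisible.
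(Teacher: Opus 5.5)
\textbf{Assessment.} Your proof of (1) is correct and follows the paper. For $m=2i-j-1\ge s-1$ the trace $H^{2d+m}_\et(X,\Z/l^n(d+i))\to H^m_\et(F,\Z/l^n(i))$ is surjective. In the critical case $m=s-1$ this holds because $E_2^{s-1,2d}=E_\infty^{s-1,2d}$. So $f_*$ is surjective on $l^n$-torsion. The cases $2i-j\ge s+2$, $2i-j=-2d$ and $2i-j<-2d$ of (2) are also fine. The gap is the case $2i-j=s+1$ of (2), and all of (3) depends on it.

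\textbf{The gap at $2i-j=s+1$.} You justify unique $l$-divisibility of $A^{d+i}(X,j)$ for all $2i-j>s$ by claiming that $CH^i(F,j)$ is uniquely $l$-divisible, since the \'etale cohomology of $F$ vanishes above degree $s$. When $2i-j=s+1$ this claim is false. There $CH^i(F,j)[l]$ is a quotient of $H^{s}_\et(F,\Z/l(i))$, which need not vanish. For example, take $F=\C(t)$, $s=1$, $i=j=2$. Milnor's sequence gives $K_2^M(\C(t))\simeq K_2(\C)\oplus\bigoplus_{a\in\C}\C^\times$, which has nonzero $l$-torsion. Similarly, \autoref{prop:l-div} only gives $l$-divisibility of $CH^{d+i}(X,j)$ here, and by (1) this group really does surject onto $CH^i(F,j)[l^n]$.

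The trace isomorphism in degree $2d+s$ does not rescue the argument. Applying the snake lemma to the comparison of the sequences $0\to CH(-,j+1)/l^n\to H\to CH(-,j)[l^n]\to0$ gives only
\[
A^{d+i}(X,j)[l^n]\simeq\Coker\bigl(f_*/l^n\colon CH^{d+i}(X,j+1)/l^n\to CH^i(F,j+1)/l^n\bigr),
\]
and you give no reason for this cokernel to vanish.

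\textbf{The missing idea: a norm argument.} Choose a finite separable $F'/F$ with $X(F')\neq\emptyset$. The resulting section makes $f'_*$ surjective, so $A^{d+i}(X_{F'},j)[l]=0$, hence $A^{d+i}(X_{F'},j)[l^\infty]=0$. The group $A^{d+i}(X,j)[l^\infty]$ is $l$-divisible, so $\pi_*\pi^*=[F':F]$ is surjective on it. This forces $A^{d+i}(X,j)[l^\infty]=0$. Your degree-one trick covers only the case where $X$ has a zero-cycle of degree prime to $l$.

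\textbf{Consequences for (3).} Your Hochschild--Serre bookkeeping is off by one. The term $E_2^{s,2d-1}$ lies in total degree $2d+s-1$, so the trace $H^{2d+s}_\et(X,\Z/l^n(d+i))\to H^s_\et(F,\Z/l^n(i))$ is an \emph{isomorphism}, not merely a surjection with kernel coming from $H^s(F,H^{2d-1})$.
\begin{enumerate}
\item For $2i-j=s$, injectivity of $f_*/l^n$ is therefore automatic. Combined with (1), it gives $A^{d+i}(X,j)/l^n=0$.
\item Surjectivity of $f_*/l^n$ is the real issue. It does not follow from surjectivity of the middle map. The snake lemma gives $\Coker(f_*/l^n)\simeq A^{d+i}(X,j-1)[l^n]$, which vanishes only by the missing case $2i-(j-1)=s+1$.
\item For the surjection in (3), the colimit of the degree-$(2d+s-1)$ sequences gives only an exact sequence
\[
\Ker(\Tr_f)\to A^{d+i}(X,j)[l^\infty]\to\Coker\bigl(f_*\otimes\Q_l/\Z_l\colon CH^{d+i}(X,j+1)\otimes\Q_l/\Z_l\to CH^i(F,j+1)\otimes\Q_l/\Z_l\bigr)\to0.
\]
To obtain the surjection from $H^s_\et(F,H^{2d-1}_\et(X_{F^\sep},\Q_l/\Z_l(d+i)))$, you must dispose of this cokernel. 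The paper does this again by working over $F'$, where the cokernel vanishes. It then descends using corestriction and the surjectivity of $\pi_*$ on $A^{d+i}(X,j)[l^\infty]$.
\end{enumerate}
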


\begin{proof}
Since $i\ge s$, \autoref{lem:cycl} identifies the finite-coefficient
higher Chow groups of $X$ and $F$ with the corresponding \'etale
cohomology groups. 
These identifications are compatible with proper
push-forward on higher Chow groups and the cohomological trace map;
see \cite[Section~3, in particular (3.8)]{GL01} and
\cite[Sections~8.2 and~8.5]{Fu11}.
The exact sequences induced by the distinguished triangle are compatible
with the proper push-forward on higher Chow groups and the \'etale
trace map. Thus, for every $n\ge1$, we have commutative diagrams with
exact rows
\begin{equation}\label{diag:modl}
\vcenter{
\xymatrix@C=2.5mm{
0 \ar[r] &
CH^{d+i}(X,j)/l^n
\ar[d]^{f_*/l^n}
\ar[r] &
H^{2d+2i-j}_{\et}(X,\Z/l^n(d+i))
\ar[d]^{\Tr_f^1}
\ar[r] &
CH^{d+i}(X,j-1)[l^n]
\ar[d]^{f_*[l^n]}
\ar[r] &
0\\
0 \ar[r] &
CH^i(F,j)/l^n
\ar[r] &
H^{2i-j}_{\et}(F,\Z/l^n(i))
\ar[r] &
CH^i(F,j-1)[l^n]
\ar[r] &
0,
}}
\end{equation}
and
\begin{equation}\label{diag:modl2}
\vcenter{
\xymatrix@C=2.5mm{
0 \ar[r] &
CH^{d+i}(X,j+1)/l^n
\ar[d]^{f_*/l^n}
\ar[r] &
H^{2d+2i-j-1}_{\et}(X,\Z/l^n(d+i))
\ar[d]^{\Tr_f^2}
\ar[r] &
CH^{d+i}(X,j)[l^n]
\ar[d]^{f_*[l^n]}
\ar[r] &
0\\
0 \ar[r] &
CH^i(F,j+1)/l^n
\ar[r] &
H^{2i-j-1}_{\et}(F,\Z/l^n(i))
\ar[r] &
CH^i(F,j)[l^n]
\ar[r] &
0.
}}
\end{equation}
We first study the trace map in \eqref{diag:modl2}. Consider the
Hochschild--Serre spectral sequence
\begin{equation}\label{eq:HSSS}
E_2^{a,b}
=
H^a_{\et}\left(F,H^b_{\et}\left(X_{F^{\sep}},\Z/l^n(d+i)
\right)\right) 
\Rightarrow H^{a+b}_{\et}\left(X,\Z/l^n(d+i)\right).
\end{equation}
Geometric Poincar\'e duality \cite[Corollary~8.5.3]{Fu11} gives
\[
H^{2d}_{\et}\left(X_{F^{\sep}},\Z/l^n(d+i)\right)
\simeq
\Z/l^n(i).
\]
Suppose that $2i-j\ge s$. If $2i-j\ge s+2$, then
$H^{2i-j-1}_{\et}(F,\Z/l^n(i))=0$, so $\Tr_f^2$ is
surjective.

Assume that $2i-j=s+1$. 
The relevant total degree in \eqref{eq:HSSS} is $2d+s$. 
The only possibly nonzero term is 
$E_2^{s,2d}\simeq H^s_\et(F,\Z/l^n(i))$. 
The Hochschild--Serre spectral sequence, together with
Poincar\'e duality, gives an isomorphism
\[
\Tr_f^2\colon
H^{2d+s}_{\et}(X,\Z/l^n(d+i))
\xrightarrow{\simeq}
H^s_{\et}(F,\Z/l^n(i)).
\]

Assume that $2i-j=s$. 
In total degree $2d+s-1$, the only possibly
nonzero terms are
$E_2^{s-1,2d}\simeq H^{s-1}_{\et}(F,\Z/l^n(i))$ and $E_2^{s,2d-1}$. 
Every incoming differential to $E_r^{s-1,2d}$ has source in geometric
degree greater than $2d$, and every outgoing differential has target
in Galois degree greater than $s$. Hence $E_\infty^{s-1,2d} = E_2^{s-1,2d}$. 
The filtration on
$H^{2d+s-1}_{\et}(X,\Z/l^n(d+i))$ therefore has the form 
\[
0\subset F^s\subset F^{s-1}
=H^{2d+s-1}_{\et}(X,\Z/l^n(d+i)),
\]
with
$F^s\simeq E_\infty^{s,2d-1}$ and $F^{s-1}/F^s
\simeq E_\infty^{s-1,2d}
\simeq H^{s-1}_{\et}(F,\Z/l^n(i))$.
The trace map 
\[
\Tr_f^2\colon
H^{2d+s-1}_{\et}(X,\Z/l^n(d+i))
\to
H^{s-1}_{\et}(F,\Z/l^n(i))
\]
is the quotient map and is surjective and $\Ker(\Tr_f^2) \simeq E_\infty^{s,2d-1}$.
It follows from \eqref{diag:modl2} that
\[
f_*\colon
CH^{d+i}(X,j)[l^n]
\to
CH^i(F,j)[l^n]
\]
is surjective whenever $2i-j\ge s$. Its kernel is
$A^{d+i}(X,j)[l^n]$, which proves assertion~(1).
Consider the exact sequence
\[
0\to A^{d+i}(X,j)
\to CH^{d+i}(X,j)
\xrightarrow{f_*}
\operatorname{Im}(f_*)
\to0.
\]
Applying the snake lemma to multiplication by $l^n$, assertion~(1)
gives
\begin{equation}\label{ex:A/l}
\begin{split}
0&\to A^{d+i}(X,j)[l^n]
\to CH^{d+i}(X,j)[l^n]
\to CH^i(F,j)[l^n]
\to0,\\
0&\to A^{d+i}(X,j)/l^n
\to CH^{d+i}(X,j)/l^n
\to \operatorname{Im}(f_*)/l^n
\to0.
\end{split}
\end{equation}

We prove assertion~(2). Suppose first that $2i-j\ge s+2$.
By \autoref{prop:l-div}, $CH^{d+i}(X,j)$ is uniquely
$l$-divisible. The second exact sequence in \eqref{ex:A/l} shows that
$A^{d+i}(X,j)$ is $l$-divisible, while its inclusion into
$CH^{d+i}(X,j)$ shows that it is $l$-torsion free. Thus
$A^{d+i}(X,j)$ is uniquely $l$-divisible.

Next, suppose that $2i-j=s+1$. By \autoref{prop:l-div},
$CH^{d+i}(X,j)$ is $l$-divisible. Hence the second exact sequence in
\eqref{ex:A/l} gives $A^{d+i}(X,j)/l=0$. Thus
$A^{d+i}(X,j)$ is $l$-divisible.

Choose a finite separable extension $F'/F$ such that
$X(F')\neq\emptyset$, and put $X_{F'}=X\otimes_FF'$.
Let $\pi\colon X_{F'}\to X$ be the projection. The flat pull-back and
proper push-forward induce maps
\[
\pi^*\colon
A^{d+i}(X,j)[l^\infty]
\to
A^{d+i}(X_{F'},j)[l^\infty]
\]
and
\[
\pi_*\colon
A^{d+i}(X_{F'},j)[l^\infty]
\to
A^{d+i}(X,j)[l^\infty].
\]
Their composite is multiplication by $[F':F]$.

The same argument as above shows that
$A^{d+i}(X_{F'},j)$ is $l$-divisible. Since $X(F')\neq\emptyset$,
the structure morphism of $X_{F'}$ admits a section. Therefore, the
left vertical map in the analogue of \eqref{diag:modl2} over $F'$ is
surjective. The middle vertical map is an isomorphism because
$2i-j=s+1$. A diagram chase shows that
$A^{d+i}(X_{F'},j)[l]=0$. Since this group is $l$-divisible, it follows
that $A^{d+i}(X_{F'},j)[l^\infty]=0$.

The group $A^{d+i}(X,j)[l^\infty]$ is $l$-divisible, so multiplication
by $[F':F]$ is surjective on it. Therefore,
\[
\pi_*\colon
A^{d+i}(X_{F'},j)[l^\infty]
\to
A^{d+i}(X,j)[l^\infty]
\]
is surjective. Hence $A^{d+i}(X,j)[l^\infty]=0$, and
$A^{d+i}(X,j)$ is uniquely $l$-divisible.

Now suppose that $2i-j=s$. In \eqref{diag:modl}, the trace map
\[
\Tr_f^1\colon H^{2d+s}_{\et}(X,\Z/l^n(d+i))
\to H^s_{\et}(F,\Z/l^n(i))
\]
is an isomorphism.
Moreover, $2i-(j-1)=s+1$, so
$A^{d+i}(X,j-1)$ is uniquely $l$-divisible by the case already
proved. Hence $A^{d+i}(X,j-1)[l^n]=0$.
Assertion~(1), applied to $j-1$, shows that the right vertical map in
\eqref{diag:modl} is surjective. It is therefore an isomorphism. A
diagram chase gives an isomorphism
\[
f_*/l^n\colon CH^{d+i}(X,j)/l^n \xrightarrow{\simeq} CH^i(F,j)/l^n.
\]
In particular, $A^{d+i}(X,j)/l^n=0$, so
$A^{d+i}(X,j)$ is $l$-divisible.

Suppose that $2i-j<-2d$. By \autoref{prop:l-div}, both
$CH^{d+i}(X,j)$ and $CH^i(F,j)$ are uniquely $l$-divisible.
Hence their kernel $A^{d+i}(X,j)$ is uniquely $l$-divisible.

Finally, if $2i-j=-2d$, then $CH^{d+i}(X,j)$ is
$l$-torsion free by \autoref{prop:l-div}. Therefore, its subgroup
$A^{d+i}(X,j)$ is also $l$-torsion free. This proves
assertion~(2).

It remains to prove the last assertion of (3). Assume that
$2i-j=s$. Taking the direct limit of \eqref{diag:modl2} over $n$, we
obtain a commutative diagram with exact rows
\begin{equation}\label{diag:modl2-lim}
\xymatrix@C=3mm{
0\ar[r] &
CH^{d+i}(X,j+1)\otimes\Q_l/\Z_l
\ar[r]
\ar[d]^{f_*\otimes\Q_l/\Z_l} &
H^{2d+s-1}_{\et}(X,\Q_l/\Z_l(d+i))
\ar[r]
\ar@{->>}[d]^{\Tr_f} &
CH^{d+i}(X,j)[l^\infty]
\ar[d]^{f_*}
\ar[r] &
0\\
0\ar[r] &
CH^i(F,j+1)\otimes\Q_l/\Z_l
\ar[r] &
H^{s-1}_{\et}(F,\Q_l/\Z_l(i))
\ar[r] &
CH^i(F,j)[l^\infty]
\ar[r] &
0.
}
\end{equation}
By the snake lemma, there is an exact sequence
\begin{equation}\label{ex:modl2-lim-full}
0\to
\Ker(f_*\otimes\Q_l/\Z_l)
\to \Ker(\Tr_f)
\to A^{d+i}(X,j)[l^\infty]
\to \Coker(f_*\otimes\Q_l/\Z_l)
\to 0.
\end{equation}
The Hochschild--Serre filtration in total degree $2d+s-1$ gives
$\Ker(\Tr_f)\simeq E_\infty^{s,2d-1}$. Moreover, there is a natural
surjection
\[
H^s_{\et}\left(F,H^{2d-1}_{\et}\left(X_{F^{\sep}},\Q_l/\Z_l(d+i)\right)\right)
\to E_\infty^{s,2d-1}.
\]
Choose a finite separable extension $F'/F$ such that
$X(F')\neq\emptyset$. Over $F'$, the push-forward
\[
CH^{d+i}(X_{F'},j+1)\otimes\Q_l/\Z_l
\to
CH^i(F',j+1)\otimes\Q_l/\Z_l
\]
is surjective because the structure morphism admits a section.
Therefore, \eqref{ex:modl2-lim-full} over $F'$ gives a surjection
\[
H^s_{\et}\left(F',H^{2d-1}_{\et}\left(X_{F^{\sep}},\Q_l/\Z_l(d+i)\right)\right)
\to
A^{d+i}(X_{F'},j)[l^\infty].
\]
The corestriction map on Galois cohomology and the proper
push-forward on higher Chow groups give a commutative diagram
\[
\xymatrix{
H^s_{\et}\left(F',H^{2d-1}_{\et}\left(X_{F^{\sep}},\Q_l/\Z_l(d+i)\right)\right)
\ar@{->>}[r]
\ar[d]
&A^{d+i}(X_{F'},j)[l^\infty]
\ar@{->>}[d]\\
H^s_{\et}\left(F,H^{2d-1}_{\et}\left(X_{F^{\sep}},\Q_l/\Z_l(d+i)\right)\right)\ar[r]
&A^{d+i}(X,j)[l^\infty].
}
\]
The right vertical map is surjective by the norm argument above.
Hence
\[
H^s_{\et}\left(F,H^{2d-1}_{\et}\left(X_{F^{\sep}},\Q_l/\Z_l(d+i)\right)\right)
\to A^{d+i}(X,j)[l^\infty]
\]
is surjective. This proves assertion~(3).
\end{proof}

\section{Applications to arithmetic fields}\label{sec:application}
Let $X$ be a smooth proper and geometrically irreducible scheme over a
field $F$, and put $d=\dim(X)$.
Before treating arithmetic fields separately, we record an elementary
group-theoretic lemma that will be used repeatedly. 

\begin{lem}[{\cite[Lemma~3.4.4]{RS00}}]\label{lem:RS}
Let $G$ be an abelian group and let $\Sigma$ be a set of primes.
Assume that
\[
\varprojlim_{m\in\mathbb N_\Sigma}G/m
\]
is finite, where $\mathbb N_\Sigma$ denotes the set of positive
integers all of whose prime divisors belong to $\Sigma$. Then there
is a non-canonical decomposition
\[
G\simeq T\oplus D,
\]
where $T$ is finite and its order is divisible only by primes in
$\Sigma$, and $D$ is $l$-divisible for every $l\in\Sigma$.
\end{lem}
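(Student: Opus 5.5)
The plan is to first produce a finite $\Sigma$-primary subgroup that splits off, and only then verify divisibility of the complement. Put $\widehat G:=\varprojlim_{m\in\mathbb N_\Sigma}G/m$. The projective system $(G/m)_{m\in\mathbb N_\Sigma}$ has surjective transition maps. So the image of $\widehat G\to G/m$ is all of $G/m$ for every $m\in\mathbb N_\Sigma$, and each $G/m$ is a quotient of the finite group $\widehat G$. In particular $|G/m|\le|\widehat G|$ uniformly, and $G/l=0$ for all but finitely many $l\in\Sigma$ (any such $l$ with $G/l\ne0$ divides $|\widehat G|$). Moreover, since $\widehat G$ is finite, the kernels $K_m=\Ker(\widehat G\to G/m)$ form a decreasing family of subgroups of a finite group with trivial intersection. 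Hence there is $m_0\in\mathbb N_\Sigma$ such that $\widehat G\to G/m$ is an isomorphism for all $m$ divisible by $m_0$. Consequently the transition maps $G/mm_0\to G/m_0$ are isomorphisms, i.e. $m_0G=mm_0G$ for all $m\in\mathbb N_\Sigma$.

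Set $D:=m_0G$. The identity $m_0G=m\,m_0G$ says exactly that $D=mD$ for every $m\in\mathbb N_\Sigma$, so $D$ is $l$-divisible for every $l\in\Sigma$. The quotient $G/D=G/m_0$ is finite, and its order divides a power of $m_0$, hence involves only primes in $\Sigma$.

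It remains to split $0\to D\to G\to G/m_0\to0$. Divisible groups are injective, but $D$ is only $\Sigma$-divisible, so I argue directly. Write $G/m_0=\bigoplus_k \Z/e_k$ with generators $\bar g_k$, where each $e_k$ lies in $\mathbb N_\Sigma$. Lift $\bar g_k$ to $g_k\in G$. Then $e_kg_k\in D=e_kD$, so $e_kg_k=e_kd_k$ for some $d_k\in D$. Replacing $g_k$ by $g_k-d_k$ gives a lift of order dividing $e_k$. The subgroup $T$ generated by these lifts maps isomorphically onto $G/m_0$. Therefore $G=T\oplus D$ with $T\simeq G/m_0$ finite of $\Sigma$-primary order and $D$ $l$-divisible for $l\in\Sigma$.

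The main point to check carefully is the stabilization step producing $m_0$. Everything hinges on finiteness of $\widehat G$ forcing the maps $\widehat G\to G/m$ to be eventually injective along the divisibility-directed set $\mathbb N_\Sigma$. I would verify this by picking, for each nonzero $x\in\widehat G$, an index $m_x$ with $x\notin K_{m_x}$, and taking $m_0$ to be the product of the finitely many $m_x$. Since $K_m$ decreases along divisibility, $K_{m_0}=0$. Surjectivity of $\widehat G\to G/m_0$ (from surjective transition maps, so that compatible lifts exist) then completes the identification.
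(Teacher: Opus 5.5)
Your proof is correct, with one step that needs a sentence of justification. The paper gives no proof of this lemma; it quotes it from \cite[Lemma~3.4.4]{RS00}. So your argument is a self-contained replacement for that citation, not a variant of an argument in the text.

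The key steps check out:
\begin{enumerate}
\item $K_m=\Ker(\widehat G\to G/m)$ decreases along divisibility and $\bigcap_m K_m=0$. Finiteness of $\widehat G$ therefore gives $m_0\in\mathbb N_\Sigma$ with $K_{m_0}=0$.
\item Consequently $G/m_0\simeq\widehat G$ and the maps $G/mm_0\to G/m_0$ are isomorphisms, i.e.\ $m_0G=mm_0G$.
\item Correcting the lifts $g_k$ by elements $d_k\in D=e_kD$ to get lifts of order dividing $e_k$ is the right substitute for injectivity of divisible groups. That tool is unavailable here, since $D$ is only $\Sigma$-divisible.
\end{enumerate}

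The step to justify is the surjectivity of $\widehat G\to G/m$. For an arbitrary directed index set, surjective transition maps do not force the limit to surject onto each term. Here it holds because $\mathbb N_\Sigma$ has the cofinal chain $m_k=\prod_{l\in\Sigma,\,l\le k}l^k$. Thus $\widehat G$ is a sequential limit of surjections, so it maps onto every $G/m_k$ and hence onto every $G/m$.

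Compared with the bare citation, your route gives a bit more. The divisible part can be taken canonically: $D=m_0G=\bigcap_{m\in\mathbb N_\Sigma}mG=\Ker(G\to\widehat G)$, with $G/D\simeq\widehat G\simeq T$. The only non-canonical choice is the complement $T$.
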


\subsection*{Finite fields}

We first consider the case where $F=\bF_q$ is a finite field.
Put $p=\ch(\bF_q)$. 

\begin{lem}\label{lem:fin}
For every $i\in\Z$ and every $j\ge0$, we have
\[
CH^i(\bF_q,j)\simeq
\begin{cases}
\Z, & \mbox{if $(i,j)=(0,0)$},\\
\Z/(q^i-1), & \mbox{if $i\ge1$ and $2i-j=1$},\\
0, & \mbox{otherwise}.
\end{cases}
\]
\end{lem}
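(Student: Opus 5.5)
We compute $CH^i(\bF_q,j)$ prime by prime, combining \autoref{lem:cycl} for $l\neq p$ with \autoref{thm:GL} and \autoref{lem:p-Milnor} for $l=p$, and using known integral information to remove the remaining uniquely divisible ambiguity. First we dispose of the trivial ranges. For $i<0$ the group vanishes by definition. For $i=0$, $z^0(\Spec\bF_q,\bullet)$ is the constant complex $\Z$ in degree $0$, so $CH^0(\bF_q,j)=\Z$ for $j=0$ and $0$ otherwise. Since $\Spec\bF_q$ has dimension $0$, Bloch's cycle complex gives $CH^i(\bF_q,j)=0$ whenever $j<i$, i.e.\ whenever $2i-j>i$. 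For $j=i\ge1$ we have $CH^i(\bF_q,i)\simeq K_i^M(\bF_q)$, which is $\bF_q^\times\simeq\Z/(q-1)$ for $i=1$ and $0$ for $i\ge2$ by Steinberg's theorem. This matches the claimed formula, because $2i-j=1$ forces $j=2i-1$, and $j=i$ gives $2i-j=1$ only when $i=1$.

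Next, fix $i\ge1$ and a prime $l\neq p$. Since $\cd_l(\bF_q)=1$ and $i\ge 1=d+\cd_l$, \autoref{lem:cycl} gives $CH^i(\bF_q,j;\Z/l^n)\simeq H^{2i-j}(\bF_q,\Z/l^n(i))$. This Galois cohomology vanishes unless $2i-j\in\{0,1\}$. In those cases it equals $\Z/l^n(i)^{G}$ in degree $0$ and $\Z/l^n(i)_{G}$ in degree $1$, each cyclic of order $\gcd(l^n,q^i-1)$. Feeding this into \eqref{eq:coeff}, we find that $CH^i(\bF_q,j)$ is uniquely $l$-divisible except possibly for $j=2i-1$ and $j=2i$. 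For $j=2i-1$ the group has $/l^n$ of order $\gcd(l^n,q^i-1)$, while $[l^n]$ injects into $H^0$-type data attached to $j-1=2i-2$, where the $H^2$ term is zero. Passing to the limit, $CH^i(\bF_q,2i-1)\{l\}\simeq H^0(\bF_q,\Q_l/\Z_l(i))\simeq\Z/(q^i-1)\{l\}$. Likewise $CH^i(\bF_q,2i)$ is $l$-torsion free, and then $l$-divisible, since its $/l^n$ injects into an $H^0$ which is already accounted for by the torsion of $CH^i(\bF_q,2i-1)$ via the surjection in \eqref{eq:coeff}. At $p$, \autoref{thm:GL} makes $CH^i(\bF_q,j)$ uniquely $p$-divisible when $i\neq j$. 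When $i=j$, \autoref{lem:p-Milnor} with $t=1$ applies, since $[\bF_q:\bF_q^p]=1$.

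We therefore get $CH^i(\bF_q,j)\simeq(\text{finite})\oplus(\text{uniquely divisible})$, with the finite part $\Z/(q^i-1)$ exactly when $2i-j=1$. The main obstacle is to kill the uniquely divisible part, i.e.\ to show $CH^i(\bF_q,j)\otimes\Q=0$ for $i\ge1$. Cohomology alone cannot see this. We will use Quillen's computation $K_n(\bF_q)\otimes\Q=0$ for $n\ge1$, together with Bloch's isomorphism $K_n(\bF_q)\otimes\Q\simeq\bigoplus_i CH^i(\bF_q,n)\otimes\Q$ via the Adams decomposition. For $n=0$ we have $CH^i(\bF_q,0)=0$ for $i\ge1$ directly. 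Since a torsion-free, uniquely divisible group $D$ with $D\otimes\Q=0$ vanishes, this yields the formula. In particular, for $j=2i-1$ the isomorphism $CH^i(\bF_q,2i-1)\simeq\Z/(q^i-1)$ then follows from the identification of its torsion computed above.
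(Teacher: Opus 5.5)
Your ingredients are the same as the paper's: \autoref{lem:cycl} for $l\neq p$, \autoref{thm:GL} and \autoref{lem:p-Milnor} at $p$, and the rational comparison with $K_j(\bF_q)\otimes\Q=0$. Your handling of the trivial ranges, of $2i-j\ge3$ and $2i-j<0$, of the $p$-part, and the final rational step is fine. The gap is in the $l$-adic bookkeeping for $j\in\{2i-2,2i-1,2i\}$, which you carry out before knowing the group is torsion.

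First, \eqref{eq:coeff} does not make $CH^i(\bF_q,2i-2)$ uniquely $l$-divisible, so your list of exceptions is wrong. The group is $l$-divisible because $H^2=0$. However,
\[
CH^i(\bF_q,2i-2)[l^n]\simeq\Coker\bigl(CH^i(\bF_q,2i-1)/l^n\to H^1_{\et}(\bF_q,\Z/l^n(i))\bigr),
\]
and $H^1_{\et}(\bF_q,\Z/l^n(i))\neq0$ when $l\mid q^i-1$. This cokernel vanishes only if $|CH^i(\bF_q,2i-1)/l^n|=\gcd(l^n,q^i-1)$. You assert that equality but never prove it; the injection only gives $\le$.

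Second, your treatment of $j=2i$ and $j=2i-1$ is circular. Identifying $CH^i(\bF_q,2i-1)\{l\}$ with $H^0_{\et}(\bF_q,\Q_l/\Z_l(i))$ requires $CH^i(\bF_q,2i)/l^n=0$. You justify that by saying $H^0$ is ``already accounted for'' by this same torsion. The sequence $0\to CH^i(\bF_q,2i)/l^n\to H^0_{\et}(\bF_q,\Z/l^n(i))\to CH^i(\bF_q,2i-1)[l^n]\to0$ does not by itself decide how $H^0$ splits between its two ends.

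The paper avoids this by proving torsion first: for $i\ge1$ and $j>0$, $CH^i(\bF_q,j)\otimes\Q\hookrightarrow K_j(\bF_q)\otimes\Q=0$. The argument then runs in order.
\begin{enumerate}
\item $CH^i(\bF_q,2i)$ is torsion, has no $l$-torsion (since $H^{-1}=0$), and has no $p$-torsion, hence is $0$.
\item Therefore $CH^i(\bF_q,2i-1)[l^n]\simeq H^0_{\et}(\bF_q,\Z/l^n(i))$. Being torsion with no $p$-part, $CH^i(\bF_q,2i-1)\simeq\Z/(q^i-1)$.
\item Then $|CH^i(\bF_q,2i-1)/l^n|=\gcd(l^n,q^i-1)=|H^1|$, which kills $CH^i(\bF_q,2i-2)[l^n]$. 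Torsion then gives $CH^i(\bF_q,2i-2)=0$.
\end{enumerate}
Since you already use the rational comparison, moving it to the front repairs your proof.

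If you prefer to keep your order, you can break the circle directly.
\begin{enumerate}
\item Put $G=CH^i(\bF_q,2i)$. It is $l$-torsion free with $|G/l^n|\le l^{v_l(q^i-1)}$. For a torsion-free group, $|G/l^n|=|G/l|^n$, so $G/l=0$.
\item Then $CH^i(\bF_q,2i-1)\{l\}\simeq\Z/l^c$ with $c=v_l(q^i-1)$.
\item For $n\ge c$, this subgroup maps injectively into $CH^i(\bF_q,2i-1)/l^n$. That gives the missing lower bound, and hence $CH^i(\bF_q,2i-2)[l^n]=0$.
\end{enumerate}
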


\begin{proof}
If $i<0$, then $z^i(\bF_q,\bullet)=0$. Moreover,
$CH^0(\bF_q,0)=\Z$ and $CH^0(\bF_q,j)=0$ for $j>0$. Thus it remains
to consider $i\ge1$.

If $j=0$, then $CH^i(\bF_q,0)=0$ for dimension
reasons. If $j>0$, \cite[Theorem~3.1]{Lev94} gives an injection
$CH^i(\bF_q,j)\otimes_\Z\Q\to K_j(\bF_q)\otimes_\Z\Q$.
Since $K_j(\bF_q)$ is finite for $j>0$
(\cite[Chapter~IV, Corollary~1.13]{Wei13}), the group
$CH^i(\bF_q,j)$ is torsion. Hence $CH^i(\bF_q,j)$ is torsion for every
$i\ge1$ and $j\ge0$.

Since $[\bF_q:\bF_q^p]=1$, \autoref{prop:uniq-p-div}, applied with
$s=1$, shows that $CH^i(\bF_q,j)$ is uniquely $p$-divisible. In
particular, it has no non-trivial $p$-primary torsion.

Let $l\neq p$ be a prime. Since
$\cd_l(\bF_q)=1$ (\cite[Chapter~I, Sections~2.2 and~3.3]{SerreCG}),
\autoref{prop:l-div}, applied with $d=0$ and $s=1$, shows that
$CH^i(\bF_q,j)$ is uniquely $l$-divisible if $2i-j\ge3$ or $2i-j<0$, and
is $l$-torsion free if $2i-j=0$. Its $l$-primary torsion therefore
vanishes in these ranges. Since this holds for every $l\neq p$, while
the $p$-primary torsion also vanishes, the torsion property proved above
gives $CH^i(\bF_q,j)=0$ if $2i-j\ge3$ or $2i-j\le0$.

It remains to treat $2i-j=1$ and $2i-j=2$. For $l\neq p$ and $n\ge1$, the
distinguished triangle
\[
\Z(i)_M\overset{l^n}{\to}\Z(i)_M\to\Z/l^n(i)_M\to\Z(i)_M[1]
\]
and \autoref{lem:cycl} give an exact sequence
\begin{equation}\label{seq:HFq}
0\to CH^i(\bF_q,j+1)/l^n
\to H^{2i-j-1}_{\et}(\bF_q,\Z/l^n(i))
\to CH^i(\bF_q,j)[l^n]\to0.
\end{equation}

Suppose first that $2i-j=1$, so that $j=2i-1$. By the case $2i-j=0$ already
proved, $CH^i(\bF_q,2i)=0$. Hence \eqref{seq:HFq} gives
\[
CH^i(\bF_q,2i-1)[l^n]
\simeq
H^0_{\et}(\bF_q,\Z/l^n(i))
\simeq
\Ker\left(q^i-1\colon\Z/l^n\to\Z/l^n\right).
\]
The last group is cyclic of order $\gcd(l^n,q^i-1)$. Letting $n$ vary,
we see that the $l$-primary part of $CH^i(\bF_q,2i-1)$ is cyclic of
order $l^{v_l(q^i-1)}$. Together with the absence of $p$-primary
torsion, this gives
\[
CH^i(\bF_q,2i-1)\simeq\Z/(q^i-1).
\]

Suppose next that $2i-j=2$, so that $j=2i-2$. The exact sequence
\eqref{seq:HFq} becomes
\[
0\to CH^i(\bF_q,2i-1)/l^n
\to H^1_{\et}(\bF_q,\Z/l^n(i))
\to CH^i(\bF_q,2i-2)[l^n]\to0.
\]
Since $G_{\bF_q}\simeq\widehat{\Z}$ is procyclic, the standard
calculation of its cohomology gives
\[
H^1_{\et}(\bF_q,\Z/l^n(i))
\simeq
\Coker\left(q^i-1\colon\Z/l^n\to\Z/l^n\right)
\]
(\cite[Chapter~I, Section~2.2]{SerreCG}). This group and
$CH^i(\bF_q,2i-1)/l^n$ have the same order. Hence the first map in the
above exact sequence is an isomorphism, and
$CH^i(\bF_q,2i-2)[l^n]=0$. This holds for every $l\neq p$ and every
$n\ge1$, while the $p$-primary torsion also vanishes by
\autoref{prop:uniq-p-div}. Since $CH^i(\bF_q,2i-2)$ is torsion, it is
zero.
\end{proof}

\begin{lem}\label{lem:fin-structure}
Suppose $i\ge1$. Then $CH^{d+i}(X,j)_{\mathrm{tor}}$ is finite of
order prime to $p$, and there is a canonical short exact sequence
\[
0\to CH^{d+i}(X,j)_{\mathrm{tor}}
\to CH^{d+i}(X,j)
\to D\to0,
\]
where
\[
D:=CH^{d+i}(X,j)/CH^{d+i}(X,j)_{\mathrm{tor}}
\]
is uniquely divisible. This sequence splits non-canonically. Moreover,
\[
CH^{d+i}(X,j)_{\mathrm{tor}}
=CH^{d+i}(X,j)\{p'\}.
\]
\end{lem}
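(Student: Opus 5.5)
First, $p$-primary part: since $[\bF_q:\bF_q^p]=1\le p^{0}$ and $i\ge1$, \autoref{prop:uniq-p-div} with $s=1$ shows $CH^{d+i}(X,j)$ is uniquely $p$-divisible. Hence it has no $p$-torsion, so $CH^{d+i}(X,j)_{\mathrm{tor}}=CH^{d+i}(X,j)\{p'\}$, and every quotient of it is $p$-divisible.

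Next, finiteness of the torsion. I would invoke the result of Gupta--Krishna--Rathore \cite[Theorem~1.6]{GKR26}, quoted in the introduction, which gives that $CH^{d+i}(X,j)\{p'\}$ is finite for $i\ge1$, $j\ge0$. If I wanted a self-contained argument, I would instead bound $CH^{d+i}(X,j)\{l\}$ using \eqref{ex:modl2} with $\Q_l/\Z_l$-coefficients: it is a quotient of $H^{2d+2i-j-1}_\et(X,\Q_l/\Z_l(d+i))$, which is finite for weight $d+i\neq$ the weight making Frobenius eigenvalues equal $1$ (Deligne's weights), and vanishes for almost all $l$. This is the main obstacle: controlling all $l$ uniformly. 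I would rely on \cite{GKR26} for it.

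Then, divisibility of $D$. Since $T:=CH^{d+i}(X,j)_{\mathrm{tor}}$ is finite, for each prime $l$ I need $D/l=0$. For $l=p$ this follows from $p$-divisibility of $CH^{d+i}(X,j)$. For $l\neq p$, I would show $CH^{d+i}(X,j)/l^n$ is bounded independently of $n$: by \eqref{ex:modl} it injects into $H^{2d+2i-j}_\et(X,\Z/l^n(d+i))$, whose order is bounded in $n$ since $H^{*}_\et(X,\Z_l(d+i))$ is finite for $i\ge1$ (Hochschild--Serre over $\widehat{\Z}$ plus weights: Frobenius acts on $H^b(X_{\bar{\bF}_q},\Q_l(d+i))$ with weights $b-2d-2i\ne0$ when $b\le 2d$, so no eigenvalue $1$). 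So $\varprojlim_n CH^{d+i}(X,j)/l^n$ is finite, and it vanishes for almost all $l$ since these cohomology groups are torsion-free for almost all $l$ (Gabber). Thus \autoref{lem:RS} with $\Sigma$ the primes $\neq p$ gives $CH^{d+i}(X,j)\simeq T'\oplus D'$ with $T'$ finite and $D'$ $p'$-divisible; combined with $p$-divisibility, $D'$ is divisible. Then the torsion of $D'$ is $T/T'$-ish; more cleanly, the image of $CH^{d+i}(X,j)$ in $D$ is divisible modulo finite, and $D$ is torsion-free by construction, so $D$ is divisible: for $x\in D$ and $m$, write $x$ as image of $t+y$, $y\in D'$, $y=mz$, so $x\equiv m\bar z$ modulo image of $T'\subset T$, which is zero in $D$. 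Hence $D$ is torsion-free and divisible, i.e. uniquely divisible.

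Finally, splitting: $D$ is divisible, hence injective as a $\Z$-module... rather, $T$ is finite and the sequence $0\to T\to CH\to D\to0$ with $D$ a $\Q$-vector space splits since $\mathrm{Ext}^1(D,T)=0$ ($T$ finite, $D$ uniquely divisible, so multiplication by $|T|$ is both zero and an isomorphism on $\mathrm{Ext}^1$).
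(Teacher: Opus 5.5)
Your argument is correct in outline but takes a different route from the paper. The paper uses one input, Kahn's theorem that $H^b_{\et}(X,(\Q/\Z)'(d+i))$ is finite for every $b$. This single input does three jobs:
\begin{enumerate}
\item it gives finiteness of $CH^{d+i}(X,j)\{p'\}$, via the surjection from $H^{2(d+i)-j-1}_{\et}(X,(\Q/\Z)'(d+i))$;
\item it gives a bound on $|CH^{d+i}(X,j)/m|$ that is uniform in all $m$ prime to $p$, so \autoref{lem:RS} applies at once with $\Sigma$ equal to all primes $\neq p$;
\item the divisible summand is then torsion free because its torsion is finite and divisible, and the splitting is read off the decomposition.
\end{enumerate}
You instead take the finiteness of $CH^{d+i}(X,j)\{p'\}$ directly from \cite{GKR26}, which is a heavier black box since it is essentially the torsion half of the lemma. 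You bound $|CH^{d+i}(X,j)/l^n|$ one prime at a time via Hochschild--Serre and Deligne's weights, which is correct because all weights are at most $-2i<0$. You then prove divisibility of $D$ directly and split the sequence via $\mathrm{Ext}^1(D,T)=0$; both steps are fine. The uniformity over $l$ that you flagged as the main obstacle in your self-contained variant is exactly what Kahn's theorem supplies in the paper.

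One step is not justified as written. Gabber's theorem is about torsion in the geometric groups $H^b_{\et}(X_{\overline{\bF}_q},\Z_l)$. It does not by itself give vanishing of the arithmetic group $H^{2d+2i-j}_{\et}(X,\Z_l(d+i))$ for almost all $l$. Even when the geometric cohomology is torsion free, this group has order $|\det(\varphi-1)|_l^{-1}$, with the determinant taken on $H^{2d+2i-j-1}_{\et}(X_{\overline{\bF}_q},\Q_l(d+i))$. You would additionally need this determinant to be an $l$-adic unit for almost all $l$, which requires integrality and $l$-independence of the Frobenius characteristic polynomials.

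You do not need this step at all. Since you already know $T$ is finite, apply \autoref{lem:RS} with $\Sigma=\{l\}$ separately for each $l\neq p$. Your own argument then gives $D/l=0$ for each such $l$, and $p$-divisibility comes from \autoref{prop:uniq-p-div}. Even more simply, since $D$ is torsion free, multiplication by $l^k$ gives $l^kD/l^{k+1}D\simeq D/l$, so $|D/l^n|=|D/l|^n$. Because $D/l^n$ is a quotient of $CH^{d+i}(X,j)/l^n$, boundedness in $n$ forces $D/l=0$, and \autoref{lem:RS} is not needed for the divisibility.
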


\begin{proof}
Write
\[
(\Q/\Z)'=\bigoplus_{l\neq p}\Q_l/\Z_l.
\]
Kahn's finiteness theorem \cite[Theorem~1]{Kah03} (see also \cite[Theorem~7.4]{GKR22}) gives the finiteness of
$H^b_{\et}\left(X,(\Q/\Z)'(d+i)\right)$
for every integer $b$.

Let $m$ be an integer prime to $p$. Since $i\ge 1$,
\autoref{lem:cycl} and \eqref{eq:coeff} give an injection
\[
CH^{d+i}(X,j)/m
\to
H^{2(d+i)-j}_{\et}\left(X,\Z/m(d+i)\right).
\]
The exact triangle
\[
\Z/m(d+i)\to(\Q/\Z)'(d+i)
\overset{m}{\to}(\Q/\Z)'(d+i)
\]
shows that the orders of $H^{2(d+i)-j}_{\et}(X,\Z/m(d+i))$ are bounded
independently of $m$. Hence the orders of $CH^{d+i}(X,j)/m$ are also
uniformly bounded.

Applying \eqref{eq:coeff} to $j+1$ and passing to the direct limit over
integers $m$ prime to $p$, we obtain a surjection
\[
H^{2(d+i)-j-1}_{\et}\left(X,(\Q/\Z)'(d+i)\right)
\to CH^{d+i}(X,j)\{p'\}.
\]
Thus $CH^{d+i}(X,j)\{p'\}$ is finite. 

The uniform boundedness above implies that
\[
\varprojlim_{m\in\mathbb N_\Sigma} CH^{d+i}(X,j)/m
\]
is finite for $\Sigma=\{l\mid l\neq p\}$. Applying
\autoref{lem:RS} with this set $\Sigma$, we obtain a non-canonical
decomposition
\[
CH^{d+i}(X,j)\simeq G\oplus D,
\]
where $G$ is a finite group of order prime to $p$ and $D$ is
$p'$-divisible.
Since $[\bF_q:\bF_q^p]=1$, \autoref{prop:uniq-p-div} shows that
$CH^{d+i}(X,j)$ is uniquely $p$-divisible. 
Since $G$ is finite of order prime to $p$, 
$D$ is divisible. 

Since $CH^{d+i}(X,j)$ is $p$-torsion free, we have 
\[
CH^{d+i}(X,j)_{\mathrm{tor}} = CH^{d+i}(X,j)\{p'\}.
\]
The group on the right is finite. 
Since $D$ is a direct summand of $CH^{d+i}(X,j)$, 
its torsion subgroup $D_{\mathrm{tor}}$ is also finite.
On the other hand, 
$D$ is divisible, and the torsion subgroup of a divisible group is divisible. 
Hence $D_{\mathrm{tor}}$ is both finite and divisible, and therefore
$D_{\mathrm{tor}}=0$.
Thus $D$ is torsion free and hence uniquely divisible.

Thus $G=CH^{d+i}(X,j)_{\mathrm{tor}}$. The decomposition above
identifies the canonical quotient by the torsion subgroup with a
uniquely divisible group, and proves the statement.
\end{proof}

We write $A^{d+i}(X,j)$ for the kernel of $f_*$.
\[
A^{d+i}(X,j)
=
\Ker\left(f_*\colon CH^{d+i}(X,j)\to CH^i(\bF_q,j)\right).
\]
\begin{cor}\label{cor:fin2}
Suppose $i\ge1$.
\begin{enumerate}
\item If $2i-j=1$, there is a canonical short exact sequence
\[
0\to A^{d+i}(X,j)
\to CH^{d+i}(X,j)
\xrightarrow{f_*}\Z/(q^i-1)\to0,
\]
where $A^{d+i}(X,j)$ is uniquely divisible. This sequence splits
non-canonically.
\item If $-2d<2i-j<1$, the torsion subgroup
$CH^{d+i}(X,j)_{\mathrm{tor}}$ is finite of order prime to $p$, and
there is a canonical short exact sequence
\[
0\to CH^{d+i}(X,j)_{\mathrm{tor}}
\to CH^{d+i}(X,j)
\to D\to0,
\]
where
\[
D:=CH^{d+i}(X,j)/CH^{d+i}(X,j)_{\mathrm{tor}}
\]
is uniquely divisible. This sequence splits non-canonically.
\item In all other cases, $CH^{d+i}(X,j)$ is uniquely divisible.
\end{enumerate}
\end{cor}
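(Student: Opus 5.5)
We combine \autoref{thm:CH-div} and \autoref{prop:l-div} for $s=\cd_l(\bF_q)=1$ and every $l\neq p$ with \autoref{prop:uniq-p-div} (applicable since $[\bF_q:\bF_q^p]=1$), which makes $CH^{d+i}(X,j)$ and its subgroup $A^{d+i}(X,j)$ uniquely $p$-divisible whenever $i\ge1$. The hypothesis $i\ge s=1$ is exactly the assumption $i\ge1$, and we split according to the value of $2i-j$.

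For case (1), $2i-j=1=s$. By \autoref{lem:fin}, $CH^i(\bF_q,j)\simeq\Z/(q^i-1)$, which is finite of order prime to $p$. For surjectivity of $f_*$, note that for each $l\neq p$, \autoref{thm:CH-div}(1) makes $f_*$ surjective on $l$-primary torsion, and $CH^i(\bF_q,j)\{l\}$ is the $l$-part of $\Z/(q^i-1)$. Summing over $l$ shows that $f_*$ is onto, which gives the canonical exact sequence. For unique divisibility of $A^{d+i}(X,j)$, \autoref{thm:CH-div}(2) with $2i-j=s$ only gives $l$-divisibility, so we also need $A^{d+i}(X,j)$ to be $l$-torsion free. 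By \autoref{lem:fin-structure}, $CH^{d+i}(X,j)_{\mathrm{tor}}$ is finite and the quotient $D$ is uniquely divisible. Hence $A^{d+i}(X,j)_{\mathrm{tor}}$ is finite. Since $A^{d+i}(X,j)$ is $l$-divisible for every $l\neq p$ and also $p$-divisible, it is divisible, and therefore its torsion subgroup is divisible. A finite divisible group is zero, so $A^{d+i}(X,j)$ is torsion free and thus uniquely divisible. The splitting follows because $A^{d+i}(X,j)$ is divisible, hence injective as a $\Z$-module, so the sequence splits (non-canonically).

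For case (2), the range $-2d<2i-j<1$ is exactly \autoref{lem:fin-structure}, which applies for every $i\ge1$. So nothing new is needed here.

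For case (3), the remaining ranges are $2i-j\ge2$ and $2i-j\le-2d$. In each, \autoref{lem:fin-structure} gives $CH^{d+i}(X,j)\simeq T\oplus D$ with $T=CH^{d+i}(X,j)\{p'\}$ finite and $D$ uniquely divisible, so it suffices to show $CH^{d+i}(X,j)\{l\}=0$ for every $l\neq p$.
\begin{itemize}
\item If $2i-j\ge3=s+2$ or $2i-j<-2d$, \autoref{prop:l-div} gives unique $l$-divisibility directly.
\item If $2i-j=-2d$, \autoref{prop:l-div}(3) gives $l$-torsion freeness.
\item If $2i-j=2=s+1$, \autoref{thm:CH-div}(2) gives $A^{d+i}(X,j)$ uniquely $l$-divisible. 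By \autoref{thm:CH-div}(1), $CH^{d+i}(X,j)[l^n]$ is an extension of $CH^i(\bF_q,j)[l^n]$ by $A^{d+i}(X,j)[l^n]=0$. Since $CH^i(\bF_q,j)=0$ by \autoref{lem:fin}, because $2i-j=2\ne1$, the $l$-torsion vanishes.
\end{itemize}

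The main obstacle is the unique divisibility of $A^{d+i}(X,j)$ in case (1). \autoref{thm:CH-div} does not supply torsion freeness there, and it has to come from the finiteness of $CH^{d+i}(X,j)_{\mathrm{tor}}$ in \autoref{lem:fin-structure} (ultimately Kahn's finiteness theorem) together with divisibility.
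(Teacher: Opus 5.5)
Your argument is correct, but in part (1) you obtain the torsion-freeness of $A^{d+i}(X,j)$ by a different route from the paper.

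\textbf{Part (1), torsion-freeness.} The paper works prime by prime. It uses the surjection $H^1_{\et}\bigl(\bF_q,H^{2d-1}_{\et}(X_{\overline{\bF}_q},\Q_l/\Z_l(d+i))\bigr)\twoheadrightarrow A^{d+i}(X,j)[l^\infty]$ from \autoref{thm:CH-div}\,(3). It then kills the source with Deligne's weight theorem: the rational $l$-adic group $H^{2d-1}$ with this twist has weights $\le -2i-1$. So Frobenius minus $1$ is invertible on it, hence surjective on the $\Q_l/\Z_l$-version, whose Frobenius coinvariants therefore vanish.

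You instead note that $A^{d+i}(X,j)_{\mathrm{tor}}\subset CH^{d+i}(X,j)_{\mathrm{tor}}$ is finite by \autoref{lem:fin-structure}. It is also divisible because $A^{d+i}(X,j)$ is, so it is zero. This is shorter and not circular, since \autoref{lem:fin-structure} rests only on Kahn's finiteness theorem, \autoref{lem:cycl}, \autoref{lem:RS} and \autoref{prop:uniq-p-div}. The cost is that the arithmetic input is delegated to Kahn's theorem, itself a consequence of Deligne's weights. The paper's argument is self-contained and isolates the cohomology group responsible for the torsion.

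\textbf{Part (3).} Your detour through \autoref{lem:fin-structure} is harmless but unnecessary. For $2i-j=2$ one has $CH^{d+i}(X,j)=A^{d+i}(X,j)$, since $CH^i(\bF_q,j)=0$. In the remaining ranges, \autoref{thm:CH-div}\,(2), \autoref{prop:l-div} and \autoref{prop:uniq-p-div} give unique divisibility directly.

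\textbf{A justification to fix.} You assert at the outset that $A^{d+i}(X,j)$ is uniquely $p$-divisible because it is a subgroup of the uniquely $p$-divisible group $CH^{d+i}(X,j)$. A subgroup inherits only $p$-torsion-freeness, not $p$-divisibility (compare $\Z\subset\Q$). You do need $p$-divisibility of $A^{d+i}(X,j)$ in part (1), both for ``divisible'' and for the final ``uniquely divisible''. Supply the kernel argument the paper uses. If $a=pb$ with $b\in CH^{d+i}(X,j)$, then $pf_*(b)=0$. By \autoref{lem:fin}, $CH^i(\bF_q,j)$ is $\Z/(q^i-1)$ or $0$, so it has no $p$-torsion. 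Hence $f_*(b)=0$ and $b\in A^{d+i}(X,j)$.
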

\begin{proof}

    \noindent 
(1)\ Assume that $2i-j=1$. Fix a prime $l\neq p$.
By \autoref{thm:CH-div}\,(3), applied with $s=1$, 
there is a surjection
\[
H^1_{\et}\left(
\bF_q,
H^{2d-1}_{\et}
\left(
X_{\overline{\bF}_q},
\Q_l/\Z_l(d+i)
\right)
\right)
\twoheadrightarrow
A^{d+i}(X,j)[l^\infty].
\]

Put $\overline X=X_{\overline{\bF}_q}$ and
\begin{align*}
T_l &=  H^{2d-1}_{\et}\left(\overline X, \Z_l(d+i)\right),    \\
V_l &=H^{2d-1}_{\et}\left(\overline X,\Q_l(d+i)\right),\\
M_l &=
H^{2d-1}_{\et}
\left(\overline X,\Q_l/\Z_l(d+i)\right).
\end{align*}
Here and below, $H^r_{\et}(-,\Z_l(t))$ denotes continuous $l$-adic
cohomology. 
By the finiteness theorem for \'etale cohomology,
$T_l$ is a finitely generated
$\Z_l$-module (\cite[Theorem~19.2]{MilneEC}).  
The exact sequence
$0\to\Z_l(d+i)\to\Q_l(d+i)\to\Q_l/\Z_l(d+i)\to0$
therefore gives an exact sequence
\[
T_l \to V_l \to M_l \to H^{2d}_{\et}\left(\overline X,\Z_l(d+i)\right) 
\to H^{2d}_{\et}\left(\overline X,\Q_l(d+i)\right).
\]
Let $L_l$ be the image of
$T_l$ in $V_l$. 
Since the kernel of the natural map
\[
T_l\to T_l\otimes_{\Z_l}\Q_l\simeq V_l
\]
is the torsion subgroup $(T_l)_{\mathrm{tor}}$, we have
$L_l\simeq T_l/(T_l)_{\mathrm{tor}}$. 
In particular, $L_l$ is a finite free $\Z_l$-module and
$L_l\otimes_{\Z_l}\Q_l\simeq V_l$.
Thus $L_l$ is a $\Z_l$-lattice in $V_l$. Moreover, it is
Frobenius-stable because the map $T_l\to V_l$ is Galois equivariant.
Then $L_l$ is a Frobenius-stable $\Z_l$-lattice and $M_l\simeq V_l/L_l$.
Since $X$ is geometrically connected, 
the trace isomorphism gives
$H^{2d}_{\et}(\overline X,\Z_l(d+i))\simeq\Z_l(i)$ 
and
$H^{2d}_{\et}
\left(
\overline X,\Q_l(d+i)
\right)
\simeq \Q_l(i)$ 
(\cite[Chapter~VI, Theorem~11.1]{Mil80}).
Under these identifications, the map between the two groups is the
natural injection 
$\Z_l(i)\hookrightarrow\Q_l(i)$.
It follows from the above long exact sequence that the map
$V_l\to M_l$ is surjective and has kernel $L_l$. Hence
$M_l\simeq V_l/L_l$.

Deligne's theorem on weights
\cite[Th\'eor\`eme~3.3.1]{Del80}, applied to
$f\colon X\to\Spec(\bF_q)$ and the pure sheaf $\Q_l(d+i)$ of weight
$-2(d+i)$, shows that $V_l$ is mixed of weights at most
$-2(d+i)+(2d-1)=-2i-1$. In particular, $1$ is not an eigenvalue of
Frobenius on $V_l$.
Let $\varphi$ denote the geometric Frobenius, which is a
topological generator of $G_{\bF_q}$. 
The endomorphism $\varphi-1$ is an automorphism of $V_l$ and induces a
surjection on $M_l\simeq V_l/L_l$. Since $G_{\bF_q}$ is procyclic,
\[
H^1_{\et}(\bF_q,M_l)
\simeq M_l/(\varphi-1)M_l
=0
\]
(\cite[Chapter~I, Section~2.2]{SerreCG}). Hence
$A^{d+i}(X,j)[l^\infty]=0$. Since $A^{d+i}(X,j)$ is $l$-divisible by
\autoref{thm:CH-div}\,(2), it is uniquely $l$-divisible.

By \autoref{prop:uniq-p-div}, $CH^{d+i}(X,j)$ is uniquely
$p$-divisible. By \autoref{lem:fin}, its target is
$CH^i(\bF_q,j)\simeq\Z/(q^i-1)$, whose order is prime to $p$.
It follows that $A^{d+i}(X,j)$ is $p$-torsion free. It is also
$p$-divisible. Indeed, if $a\in A^{d+i}(X,j)$ and $a=pb$ for some
$b\in CH^{d+i}(X,j)$, then $pf_*(b)=0$. Since multiplication by $p$
is an automorphism of $\Z/(q^i-1)$, we have $f_*(b)=0$. Thus
$b\in A^{d+i}(X,j)$. Consequently, $A^{d+i}(X,j)$ is uniquely
$p$-divisible, and hence uniquely divisible.

For every prime $l$ dividing $q^i-1$, \autoref{thm:CH-div}\,(1) shows
that $f_*$ is surjective on the $l$-primary torsion subgroups.
Therefore $f_*$ is surjective, and there is an exact sequence
\[
0\to A^{d+i}(X,j)
\to CH^{d+i}(X,j)
\overset{f_*}{\to}\Z/(q^i-1)
\to0.
\]
A divisible abelian group is injective
(\cite[Section~2.3]{Wei94}), so this sequence splits non-canonically.
Thus
\[
CH^{d+i}(X,j)
\simeq
\Z/(q^i-1)\oplus A^{d+i}(X,j),
\]
where $A^{d+i}(X,j)$ is uniquely divisible.

    \smallskip 
    \noindent 
(2)\ The intermediate case  $-2d < 2i-j<1$ follows from \autoref{lem:fin-structure}.

    \smallskip 
    \noindent 
(3)\ Suppose first that $2i-j>1$. By \autoref{lem:fin},
$CH^i(\bF_q,j)=0$, and hence
$CH^{d+i}(X,j)=A^{d+i}(X,j)$. For every prime $l\neq p$,
\autoref{thm:CH-div}\,(2), applied with $s=1$, shows that
$A^{d+i}(X,j)$ is uniquely $l$-divisible. Moreover,
\autoref{prop:uniq-p-div}, applied with $s=1$, shows that
$CH^{d+i}(X,j)$ is uniquely $p$-divisible. Therefore,
$CH^{d+i}(X,j)$ is uniquely divisible.

    \smallskip 
    \noindent 
Finally, suppose that  $2i-j\le-2d$. 
If $2i-j<-2d$, then \autoref{prop:l-div} and
\autoref{prop:uniq-p-div} show that $CH^{d+i}(X,j)$ is uniquely
$l$-divisible for every prime $l$. Therefore it is uniquely divisible.
If $2i-j=-2d$, then
\autoref{prop:l-div} and \autoref{prop:uniq-p-div} show that the group
is torsion free, while \autoref{lem:fin-structure} shows that it is
a direct sum of a finite group and a uniquely divisible group. Since the whole group is torsion free, the finite summand is zero,
so it is uniquely divisible.
\end{proof}



We shall use the following form of Parshin's conjecture.

\begin{conj}[{Parshin's conjecture, cf.~\cite[Conjecture~16]{Gei19}}]
\label{conj:P}
Let $Y$ be a smooth projective scheme over a finite field. Then
$CH^r(Y,j)\otimes_\Z\Q=0$ for every $r\in\Z$ and every $j>0$.
\end{conj}

\begin{cor}\label{cor:fin}
Assume that $X$ is projective and that Parshin's conjecture holds for $X$.
Suppose $i\ge 1$. Then 
\[
CH^{d+i}(X,j)\simeq
\begin{cases}
\Z/(q^i-1), & \mbox{if $2i-j=1$},\\
\text{finite of order prime to $p$}, & \mbox{if $-2d<2i-j<1$},\\
0, & \mbox{otherwise}.
\end{cases}
\]
\end{cor}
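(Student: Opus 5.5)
Under Parshin's conjecture for $X$, I will show that every uniquely divisible group in \autoref{cor:fin2} vanishes. The finite parts are unaffected, so the three cases of \autoref{cor:fin} follow directly.

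The key observation is that a uniquely divisible abelian group $D$ is a $\Q$-vector space, so $D\simeq D\otimes_\Z\Q$. Moreover, if $G\simeq T\oplus D$ with $T$ finite, then $G\otimes_\Z\Q\simeq D\otimes_\Z\Q\simeq D$. So it suffices to show $CH^{d+i}(X,j)\otimes_\Z\Q=0$ in each case. Apply \autoref{conj:P} to $Y=X$, which is smooth projective over $\bF_q$, with $r=d+i$. This gives $CH^{d+i}(X,j)\otimes_\Z\Q=0$ whenever $j>0$.

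It remains to check that $j>0$ in every case, or to handle $j=0$ separately.
\begin{enumerate}
\item If $2i-j=1$, then $j=2i-1\ge1$. By \autoref{cor:fin2}(1), $CH^{d+i}(X,j)\simeq\Z/(q^i-1)\oplus A^{d+i}(X,j)$, and $A^{d+i}(X,j)$ is uniquely divisible and killed by $\otimes\Q$. Hence $CH^{d+i}(X,j)\simeq\Z/(q^i-1)$. Concretely, $f_*$ is an isomorphism because its kernel $A^{d+i}(X,j)$ vanishes.
\item If $-2d<2i-j<1$, then $j>2i-1\ge1$. By \autoref{cor:fin2}(2), $D=0$, so $CH^{d+i}(X,j)=CH^{d+i}(X,j)_{\mathrm{tor}}$, which is finite of order prime to $p$.
\item In the remaining cases the group is uniquely divisible by \autoref{cor:fin2}(3). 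If $j>0$, Parshin's conjecture kills it. If $j=0$, then $CH^{d+i}(X,0)=CH^{d+i}(X)=0$ for dimension reasons: it is generated by cycles of codimension $d+i>d$ on $X$, since $i\ge1$. The range $2i-j\le-2d$ forces $j\ge 2i+2d>0$, so there Parshin's conjecture applies directly.
\end{enumerate}

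I see no serious obstacle. The only points needing care are the identification $D\simeq D\otimes\Q$ for uniquely divisible $D$, and making sure the degenerate case $j=0$ is covered by the trivial vanishing of codimension $>d$ cycles rather than by the conjecture.
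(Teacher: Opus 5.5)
Your proof is correct and follows essentially the same route as the paper: in each range you take the decomposition from \autoref{cor:fin2}, check that $j>0$, and apply Parshin's conjecture to kill the uniquely divisible summand. Like the paper, you handle the only degenerate case, $j=0$ with $2i-j>1$, by the vanishing of cycles of codimension $d+i>d$.
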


\begin{proof}
Suppose first that $2i-j>1$. If $j=0$, then
$CH^{d+i}(X,0)=0$ for dimension reasons. If $j>0$, then
\autoref{cor:fin2} shows that $CH^{d+i}(X,j)$ is uniquely divisible,
while Parshin's conjecture shows that
$CH^{d+i}(X,j)\otimes_\Z\Q=0$. Therefore $CH^{d+i}(X,j)=0$.

Suppose next that $2i-j=1$. Then $j=2i-1>0$. By
\autoref{cor:fin2}, there is a decomposition
$CH^{d+i}(X,j)\simeq\Z/(q^i-1)\oplus D$, where $D$ is uniquely
divisible. Since $D$ is a $\Q$-vector space, Parshin's conjecture gives
$D=0$. Thus $CH^{d+i}(X,j)\simeq\Z/(q^i-1)$.

Suppose that $-2d<2i-j<1$. By \autoref{cor:fin2}, there is an exact
sequence
\[
0\to G\to CH^{d+i}(X,j)\to D\to0,
\]
where $G$ is finite of order prime to $p$ and $D$ is uniquely
divisible. Since $j>0$, Parshin's conjecture gives
$CH^{d+i}(X,j)\otimes_\Z\Q=0$, and hence $D=0$.

If $2i-j\le-2d$, then \autoref{cor:fin2} shows that
$CH^{d+i}(X,j)$ is uniquely divisible. Again $j>0$, so Parshin's
conjecture forces this group to vanish.
\end{proof}

Parshin's conjecture is known for smooth proper curves over finite
fields. Indeed, Harder's theorem shows that $K_n(C)$ is finite for
every $n>0$; see \cite[Chapter~VI, Theorem~6.1]{Wei13}. 

\begin{prop}\label{prop:fin-curve}
    Let $C$ be a smooth proper and geometrically irreducible
    curve over $\bF_q$. Let $i\ge 1$ and $j\ge  0$. Then
    \[
    CH^{i+1}(C,j)
    \simeq
    \begin{cases}
    \Z/(q^i-1), & 2i-j=1,\\
    K_{2i}(C), & 2i-j=0,\\
    \Z/(q^{i+1}-1), & 2i-j=-1,\\
    0, & \text{otherwise}.
    \end{cases}
    \]
\end{prop}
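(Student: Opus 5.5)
The plan is to use that Parshin's conjecture holds for $C$ (Harder's theorem, as recalled above) and apply \autoref{cor:fin} with $d=1$. This immediately gives $CH^{i+1}(C,j)\simeq\Z/(q^i-1)$ when $2i-j=1$, and $CH^{i+1}(C,j)=0$ when $2i-j\ge 2$ or $2i-j\le -2$. In the two remaining cases $2i-j\in\{0,-1\}$, \autoref{cor:fin} only says that the group is finite of order prime to $p$. So it suffices to compute its $l$-primary part for each prime $l\neq p$. Here I would use \autoref{lem:cycl} in the range $r\le j$, where the cycle class map is an isomorphism, together with \eqref{eq:coeff}.

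For $2i-j=-1$, i.e.\ $j=2i+1\ge r=i+1$, \autoref{lem:cycl} and \eqref{eq:coeff} applied to $j+1=2i+2$ give
\[
0\to CH^{i+1}(C,2i+2)/l^n\to H^0_{\et}(C,\Z/l^n(i+1))\to CH^{i+1}(C,2i+1)[l^n]\to0 .
\]
Since $2i-(2i+2)=-2=-2d$, the group $CH^{i+1}(C,2i+2)$ vanishes by \autoref{cor:fin}. Geometric irreducibility of $C$ gives
\[
H^0_{\et}(C,\Z/l^n(i+1))=H^0_{\et}(\bF_q,\Z/l^n(i+1)),
\]
which is cyclic of order $\gcd(l^n,q^{i+1}-1)$. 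Letting $n$ and $l$ vary, exactly as in the proof of \autoref{lem:fin}, yields $CH^{i+1}(C,2i+1)\simeq\Z/(q^{i+1}-1)$.

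For $2i-j=0$, i.e.\ $j=2i$, I would first relate $CH^{i+1}(C,2i)$ to $K_{2i}(C)$ through the motivic (Bloch--Lichtenbaum/Friedlander--Suslin/Levine) spectral sequence
\[
E_2^{a,b}=H^{a-b}(C,\Z(-b))\Rightarrow K_{-a-b}(C).
\]
Its line $-a-b=n$ has graded pieces $CH^r(C,n)$. For $n=2i$, the terms are as follows.
\begin{enumerate}
\item The term $r=0$ vanishes.
\item The term $r=1$ equals $H^{2-2i}(C,\Z(1))$, which vanishes for $i\ge1$ since $\Z(1)=\Gm[-1]$.
\item For $r\ge2$, write $r=i'+1$; then $2i'-2i=2r-2-2i$ is even. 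By the cases already settled, $CH^r(C,2i)$ is nonzero only if $2r-2-2i=0$, i.e.\ $r=i+1$.
\end{enumerate}
So exactly one term survives on this line. The main obstacle is that differentials into and out of this term (from the lines $n=2i\pm1$, which do contain nonzero finite groups) need not vanish a priori.

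To bypass the spectral sequence, I would compare both sides with étale cohomology prime by prime. Since $CH^{i+1}(C,2i+1)$ is finite, passing to the limit in \eqref{eq:coeff} with $j+1=2i+1$ gives
\[
CH^{i+1}(C,2i)\{l\}\simeq H^1_{\et}(C,\Q_l/\Z_l(i+1)).
\]
On the $K$-theory side, I would invoke Soulé's étale Chern class isomorphism for curves over finite fields (using finiteness of $K_n(C)$ by Harder):
\[
K_{2i}(C)\{l\}\simeq H^1_{\et}(C,\Q_l/\Z_l(i+1)).
\]
Finally, $K_{2i}(C)$ has no $p$-torsion by Geisser--Levine, since in characteristic $p$ the $p$-part of $K$-theory of a smooth variety comes from logarithmic de Rham--Witt cohomology, which vanishes in this degree. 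Both groups are then finite with isomorphic $l$-parts for all primes $l$ and no $p$-part, so $CH^{i+1}(C,2i)\simeq K_{2i}(C)$. Alternatively, one can show that the edge map of the spectral sequence realizes this isomorphism, via the compatibility of the cycle class map with the Chern classes.
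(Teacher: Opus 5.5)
Your proof is correct, but it departs from the paper's argument in both nontrivial cases.

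For $2i-j=-1$ you argue directly. Since $2i-j=-2=-2d$ there, \autoref{cor:fin} gives $CH^{i+1}(C,2i+2)=0$, so \autoref{lem:cycl} and \eqref{eq:coeff} give $CH^{i+1}(C,2i+1)[l^n]\simeq H^0_{\et}(\bF_q,\Z/l^n(i+1))$, exactly as in \autoref{lem:fin}. This is shorter than the paper's route. The paper splits off $\Z/(q^{i+1}-1)$ using $M(C)\simeq\Z\oplus M^1(C)\oplus\Z(1)[2]$. It then kills the complement $U_i$ by comparing the order of $K_{2i+1}(C)$, read off from the motivic spectral sequence, with the known computation $K_{2i+1}(C)\simeq(\Z/(q^{i+1}-1))^{\oplus 2}$. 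Your version needs no $K$-theory at all.

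For $2i-j=0$, the obstacle you describe does not exist. In $E_2^{a,b}=CH^{-b}(C,-a-b)$, the differential $d_r$ lowers $n=-a-b$ by one and raises the weight $-b$ by $r-1\ge 1$.
\begin{itemize}
\item Out of $CH^{i+1}(C,2i)$, it lands in $CH^{i+r}(C,2i-1)$, where $2(i+r-1)-(2i-1)=2r-1\ge 3$.
\item Into $CH^{i+1}(C,2i)$, it comes from $CH^{i+2-r}(C,2i+1)$, where either $2(i+1-r)-(2i+1)\le -3$ or the weight is at most $1$.
\end{itemize}
All of these groups vanish by the cases you have already settled. The nonzero groups on the lines $2i-1$ and $2i+1$ sit in weights $\{i,i+1\}$ and $\{i+1,i+2\}$, out of reach of these differentials. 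This is the paper's argument, and it gives a canonical edge isomorphism $CH^{i+1}(C,2i)\xrightarrow{\simeq}K_{2i}(C)$ with no further input.

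Your \'etale detour is still valid. The identification $CH^{i+1}(C,2i)\{l\}\simeq H^1_{\et}(C,\Q_l/\Z_l(i+1))$ is right, and finite groups with isomorphic primary parts are isomorphic. But the detour has costs:
\begin{itemize}
\item It imports the comparison $K_{2i}(C)\{l\}\simeq H^1_{\et}(C,\Q_l/\Z_l(i+1))$. This is true, but it is a Quillen--Lichtenbaum-type statement that is normally proved through this same spectral sequence together with Beilinson--Lichtenbaum.
\item It needs Geisser--Levine separately for the $p$-part.
\item It yields only a non-canonical isomorphism.
\end{itemize}
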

Here, the Quillen $K$-group $K_{2i}(C)$ is finite of order prime to $p = \ch(\bF_q)$. 
\begin{proof}
    We consider a spectral sequence 
    \begin{equation}\label{eq:motivic-ss}
    E_2^{a,b}=CH^{-b}(C,-a-b)
    \Rightarrow K_{-a-b}(C) 
    \end{equation}
    (\cite[Theorem~13.6]{FS02}). 

    By \autoref{cor:fin} and Parshin's conjecture for curves (\cite[Chapter~VI, Theorem~6.1]{Wei13}), 
    we have 
    $CH^{i+1}(C,j) = 0$ if 
    $2i-j\ge 2$ or $2i-j\le -2$. 
    It is also proved that 
    $CH^{i+1}(C,j) \simeq \Z/(q^i-1)$ if $2i-j=1$. 
    
    Furthermore, by $\Z(1) \simeq \Gm[-1]$, we have 
    $CH^{1}(C,j) = 0$ if $j\ge 2$. 

    \smallskip
    \noindent
    (\textbf{Case $2i-j=0$})\ 
    Let $i\ge1$ and $2i-j=0$. 
    In total degree $2i$ in \eqref{eq:motivic-ss}, 
    the only possibly nonzero $E_2$-term is
    $E_2^{1-i,-i-1}=CH^{i+1}(C,2i)$. The vanishing established above
    also shows that every possible source and target of a differential
    meeting $E_2^{1-i,-i-1}$ is zero. Hence
    \[
    E_\infty^{1-i,-i-1}
    = E_2^{1-i,-i-1}
    = CH^{i+1}(C,2i),
    \]
    and every other associated graded piece of $K_{2i}(C)$ is zero.
    We obtain a canonical isomorphism
    \[
    CH^{i+1}(C,2i)
    \overset{\simeq}{\to}
    K_{2i}(C).
    \]

    \smallskip
    \noindent
    (\textbf{Case $2i-j=-1$})\ 
    Consider the case $2i-j= -1$. 
    Choose a zero-cycle of degree one on $C$. This choice gives a
    non-canonical decomposition in the category of geometric motives
    $DM_{gm}(\bF_q)$:
    \begin{equation}\label{eq:geometric-motive-decomposition}
    M(C)\simeq \Z\oplus M^1(C)\oplus \Z(1)[2].
    \end{equation}
    Using \eqref{eq:geometric-motive-decomposition}, we obtain
    \begin{align}
    &CH^{i+1}(C,2i+1)
    =H^1_M(C,\Z(i+1)) \notag\\
    &\quad \simeq
    H^1_M(\bF_q,\Z(i+1))
    \oplus
    \Hom_{DM_{gm}(\bF_q)}\bigl(M^1(C),\Z(i+1)[1]\bigr) \oplus H^{-1}_M(\bF_q,\Z(i)).
    \label{eq:minus-one-motive}
    \end{align}
    Since $H^{-1}_M(\bF_q,\Z(i)) \simeq CH^i(\bF_q,2i+1) = 0$ 
    and $H^1_M(\bF_q,\Z(i+1)) \simeq CH^{i+1}(\bF_q,2i+1) \simeq \Z/(q^{i+1}-1)$  (\autoref{lem:fin}), 
    we have 
    \begin{equation}\label{eq:minus-one-U}
    CH^{i+1}(C,2i+1)
    \simeq
    \Z/(q^{i+1}-1)\oplus U_i,
    \end{equation}
    where
    $U_i:=
    \Hom_{DM_{gm}(\bF_q)}\bigl(M^1(C),\Z(i+1)[1]\bigr)$.
    Note that $CH^{i+1}(C,2i+1)$ and hence $U_i$ are finite of order prime to $p = \ch(\bF_q)$ (\autoref{cor:fin}). 
    It remains to prove that $U_i=0$. Consider the spectral sequence \eqref{eq:motivic-ss} above. 
    In total degree $2i+1$, 
    the only possibly nonzero terms are 
    \[
    E_2^{-i,-i-1} = CH^{i+1}(C,2i+1),\ \mbox{and}\ E_2^{-i+1,-i-2} = CH^{i+2}(C,2i+1) \simeq \Z/(q^{i+1}-1).
    \]
    Here, the last isomorphism follows from the case of
    $2(i+1)-(2i+1)=1$. The vanishing established above shows that no
    nonzero differential enters or leaves either of these two terms.
    Hence they survive to $E_\infty$, and $K_{2i+1}(C)$ has a finite
    filtration whose two nonzero associated graded pieces are
    $CH^{i+1}(C,2i+1)$ 
    and
    $\Z/(q^{i+1}-1)$.
    Hence
    \begin{equation}\label{eq:odd-order-product}
    |K_{2i+1}(C)| =|CH^{i+1}(C,2i+1)|(q^{i+1}-1).    
    \end{equation}
    
    On the other hand, the computation of the odd $K$-groups (\cite[Chapter VI, Theorem~6.7]{Wei13}) 
    and Quillen's computation (\cite[Chapter~IV, Corollary~1.13]{Wei13}) give
    \[
    K_{2i+1}(C)
    \simeq
    K_{2i+1}(\bF_q)\oplus K_{2i+1}(\bF_q) \simeq  \Z/(q^{i+1}-1)^{\oplus 2}.
    \]
    Combining this equality with 
    \eqref{eq:odd-order-product}, 
    we have $|CH^{i+1}(C,2i+1)| = q^{i+1}-1$ and 
    \eqref{eq:minus-one-U} says $|U_i| =1$ and hence $U_i=0$. 
    We obtain 
    \[
    CH^{i+1}(C,2i+1) \simeq \Z/(q^{i+1}-1).
    \]
\end{proof}



\subsection*{Local fields}

Let $F$ be a local field with residue field $k$, and $p = \ch(k)$.
Moore's theorem \cite[Chapter~IX, Theorem 4.3]{FV02} and Merkur'ev's theorem \cite[Chapter~IX, Theorem 4.7]{FV02} give the following decomposition of $CH^2(F,2)\simeq K_2^M(F)$:
\[
CH^2(F,2)\simeq K_2^M(F) \simeq \mu(F)\oplus D
\]
where $\mu(F)$ is the group of roots of unity in $F$, and $D$ is uniquely divisible.
For $i\ge 3$, it is known that $K_i^M(F)$ is uniquely divisible by Sivitskii's theorem \cite[Chapter~IX, Theorem 4.11]{FV02}. 
We will see that an analogous property holds for the higher Chow groups of varieties over $F$ (\autoref{lem:CH-loc2}, \autoref{cor:CH-loc}).

We first prepare a lemma on \'etale cohomology of $F$.
\begin{lem}\label{lem:coh-loc}
    Let $F$ be a local field with residue field $k=\bF_q$ and $p = \ch(k)$.
    Assume $i\neq 0,1.$
    Then, for every prime $l\neq p$, we have
    $$
    H^2_\et(F,\Q_l/\Z_l(i))=0,\  
    H^1_\et(F,\Q_l/\Z_l(i))\simeq \Z/l^{c_{l,i-1}},\ 
    H^0_\et(F,\Q_l/\Z_l(i))\simeq \Z/l^{c_{l,i}} 
    $$
    where $c_{l,i}=v_l(q^i-1)$.
    If $\ch(F)=0$, then we have
    $$
    H^2_\et(F,\Q_p/\Z_p(i))=0,\  
    H^1_\et(F,\Q_p/\Z_p(i))= C \oplus (\Q_p/\Z_p)^{[F:\Q_p]},\ 
    H^0_\et(F,\Q_p/\Z_p(i))\simeq \Z/p^{c_{p,i}} 
    $$
    for some $c_{p,i}\ge 0$ and a finite group $C$. 
\end{lem}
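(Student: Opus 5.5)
We compute each group from local Tate duality and the Euler--Poincaré characteristic formula, treating $l\neq p$ and $l=p$ (with $\ch(F)=0$) separately. Throughout, $\Q_l/\Z_l(i)=\varinjlim_n \Z/l^n(i)$, and we pass to the limit over $n$, using that Galois cohomology commutes with filtered colimits.

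\emph{The groups $H^0$.} Write $\chi_l\colon G_F\to\Z_l^\times$ for the $l$-adic cyclotomic character. Then $H^0_\et(F,\Q_l/\Z_l(i))$ is the subgroup of $\Q_l/\Z_l$ on which $\chi_l^i$ acts trivially. For $l\neq p$, the character $\chi_l$ factors through $\mathrm{Gal}(F^{\mathrm{ur}}/F)$, and Frobenius acts by $q$. So the invariants are the kernel of $q^i-1$ on $\Q_l/\Z_l$, which is $\Z/l^{v_l(q^i-1)}$. For $l=p$ and $\ch(F)=0$, the image of $\chi_p^i$ is an open subgroup of $\Z_p^\times$. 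Since $i\neq0$, this image is nontrivial and infinite, so the invariants are finite cyclic, $\Z/p^{c_{p,i}}$ for some $c_{p,i}\ge0$.

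\emph{The groups $H^2$.} By local Tate duality, $H^2(F,\Z/l^n(i))$ is dual to $H^0(F,\Z/l^n(1-i))$. Passing to the limit, $H^2_\et(F,\Q_l/\Z_l(i))$ is dual to $\varprojlim_n H^0(F,\Z/l^n(1-i))$, where the transition maps are multiplication by $l$. For $l\neq p$, each $H^0(F,\Z/l^n(1-i))$ is the kernel of $q^{1-i}-1$ on $\Z/l^n$. Because $i\neq1$, we have $q^{1-i}\neq1$, so these groups have order bounded independently of $n$. A bounded inverse system whose transitions are multiplication by $l$ has vanishing limit, so $H^2=0$. For $l=p$ and $\ch(F)=0$, the same argument works because $\chi_p^{1-i}$ is nontrivial for $i\neq1$; this is the only place where $i\neq1$ is used.

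\emph{The groups $H^1$.} Use the Euler--Poincaré formula: $|H^0|\,|H^2|/|H^1|$ equals $1$ for $l\neq p$ and $|l^n|_F$ for $l=p$. Applying it at finite level to $\Z/l^n(i)$, together with the $H^0$ and $H^2$ computations, bounds $|H^1(F,\Z/l^n(i))|$. Now take the long exact sequence of $0\to\Z/l^n(i)\to\Q_l/\Z_l(i)\xrightarrow{l^n}\Q_l/\Z_l(i)\to0$. It gives $H^1(\Q_l/\Z_l(i))[l^n]$ as an extension of $H^1(\Z/l^n(i))$ by $H^0(\Q_l/\Z_l(i))/l^n$. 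It also gives $H^1(\Q_l/\Z_l(i))/l^n\hookrightarrow H^2(\Z/l^n(i))$, which is dual to $H^0(\Z/l^n(1-i))$.

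For $l\neq p$, $H^0(\Q_l/\Z_l(i))$ is finite, so $H^0/l^n$ is eventually $\Z/l^{c_{l,i}}$. Hence $|H^1(\Q_l/\Z_l(i))[l^n]|$ stabilizes at $l^{c_{l,i}}\cdot l^{c_{l,1-i}}/l^{c_{l,i}}=l^{c_{l,1-i}}$. So $H^1$ is finite of this order. Note $v_l(q^{1-i}-1)=v_l(q^{i-1}-1)$, so the order is $l^{c_{l,i-1}}$.

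To identify $H^1$ as cyclic, use inflation--restriction along $F^{\mathrm{ur}}/F$. The tame inertia has pro-$l$ quotient $\Z_l(1)$. This gives $H^1$ as an extension of $H^0(\hat\Z,\Q_l/\Z_l(i-1))$, which is cyclic of order $l^{c_{l,i-1}}$, by the cokernel of $q^i-1$ on $\Q_l/\Z_l$, which is zero since $\Q_l/\Z_l$ is divisible. Hence $H^1\simeq\Z/l^{c_{l,i-1}}$. I would actually use this inflation--restriction argument directly, as it is cleaner than the counting.

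For $l=p$, the counting gives $|H^1(\Q_p/\Z_p(i))[p^n]|=p^{n[F:\Q_p]}\cdot(\text{bounded})$. Hence the corank is $[F:\Q_p]$. A $p$-primary torsion group of finite corank has the form $(\Q_p/\Z_p)^r\oplus C$ with $C$ finite, by the structure theory of cofinitely generated $\Z_p$-modules. Cofinite generation follows because $H^1[p]$ is finite.

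The main obstacle I expect is bookkeeping the limits in the $l=p$ case. The finite-level Euler characteristic must be combined correctly with the bounded $H^0$ and $H^2$ terms to extract exactly corank $[F:\Q_p]$. The cleanest route is to compute $|H^1(\Q_p/\Z_p(i))[p^n]|$ as above for large $n$, where the finite contributions stabilize.
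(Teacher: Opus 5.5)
Your proof is correct. For $H^0$, for $H^2$ (local duality against $H^0(F,\Z_l(1-i))=0$), and for $H^1$ with $l\neq p$ (inflation--restriction along $F^{\mathrm{ur}}/F$, where $H^1(k,\Q_l/\Z_l(i))=0$ by divisibility and the tame quotient $\Z_l(1)$ contributes $H^0(k,\Q_l/\Z_l(i-1))$), you follow the paper's argument essentially verbatim.

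The one genuinely different step is $H^1(F,\Q_p/\Z_p(i))$ when $\ch(F)=0$:
\begin{itemize}
\item The paper dualizes this group to $H^1(F,\Z_p(1-i))$. It then applies the Euler characteristic formula to the $\Q_p$-representation $\Q_p(1-i)$, whose $H^0$ and $H^2$ vanish, and reads off $\Z_p$-rank $[F:\Q_p]$.
\item You apply Tate's formula at each finite level $\Z/p^n(i)$ and count $|H^1(F,\Q_p/\Z_p(i))[p^n]|=p^{n[F:\Q_p]+c_{p,1-i}}$ for $n\gg0$. You then invoke the structure of $p$-primary groups with finite $p$-socle.
\end{itemize}
The paper's route is shorter once continuous cohomology and the rational Euler characteristic are available. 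Yours stays entirely with finite coefficients and also pins down $|C|=p^{c_{p,1-i}}$, which is consistent with $C$ being dual to the torsion $H^0(F,\Q_p/\Z_p(1-i))$ of $H^1(F,\Z_p(1-i))$.

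Two small corrections. First, the coefficient sequence gives
\[
0\to H^0(F,\Q_l/\Z_l(i))/l^n\to H^1(F,\Z/l^n(i))\to H^1(F,\Q_l/\Z_l(i))[l^n]\to0,
\]
so it is $H^1(F,\Z/l^n(i))$ that is the extension, not $H^1(F,\Q_l/\Z_l(i))[l^n]$. Your order count already divides in the correct direction, so nothing downstream changes. Second, $i\neq1$ is not used only in the $H^2$ step. It is also needed for finiteness of $H^0(k,\Q_l/\Z_l(i-1))$ in the $H^1$ computation. For $l=p$ it is needed for the boundedness of $|H^2(F,\Z/p^n(i))|$ that your corank count relies on.
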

\begin{proof}
    Take $l\neq p$.
    Then we have
    $$
    H^0_\et(F, \Z/l^n(i))\simeq H^0_\et (k,\Z/l^n(i))\simeq \Ker(q^i -1\colon \Z/l^n \to \Z/l^n).
    $$
    Here the last isomorphism follows from the proof of \autoref{lem:fin}.
    Put $c_{l,i}=v_l(q^i-1)$.
    This gives
    $$
    H^0_\et(F,\Q_l/\Z_l(i))\simeq \varinjlim_n\Ker(q^i -1\colon \Z/l^n \to \Z/l^n)\simeq \Z/l^{c_{l,i}}.
    $$

    By local Tate duality, 
    \[ 
     H_{\et}^{2}(F,\Q_l/\Z_l(i))^\vee \simeq H^0_\et(F,\Z_l(1-i))\simeq \varprojlim_n \Ker(q^{i-1} -1\colon \Z/l^n \to \Z/l^n)=0. 
    \]

    Since $l\neq p$, there is a short exact sequence
    $$
    0\to H^1_\et(k,\Q_l/\Z_l(i)) \to H^1_\et(F,\Q_l/\Z_l(i)) \to H^0_\et(k,\Q_l/\Z_l(i-1))\to 0.
    $$
    The left term is isomorphic to the cokernel of $\operatorname{Frob}_q-1: \Q_l/\Z_l(i)\to \Q_l/\Z_l(i)$ which is zero.
    Thus we have
    \begin{equation}
        H^1_\et(F,\Q_l/\Z_l(i))\simeq H^0_\et(k,\Q_l/\Z_l(i-1))\simeq \Z/l^{c_{l,i-1}}.
    \end{equation}
    This completes the proof in the case $l\neq p$.

    Now we assume that $\ch(F)=0$ and $l=p$.
    The absolute Galois group $G_F$ acts on $\Z/p^n(1) = \mu_{p^n}$ 
    by the $p$-adic cyclotomic character $\chi_p\colon G_F \to \Z_p^\times$. 
    The $p$-adic cyclotomic character has open image: $\chi_p(G_F) \subset \Z_p^\times$ 
    is an open subgroup. 
    Choose $e\ge1$ such that $1+p^e\Z_p\subset\chi_p(G_F)$. 
    Choose $\sigma_0\in G_F$ such that $\chi_p(\sigma_0) = 1+ p^e$. 
    Then 
    $c_{p,i}' := v_p(\chi_p(\sigma_0)^{i} -1) = v_p((1+p^e)^{i}-1) < \infty$, and the order of $H^0_\et(F,\Z/p^n(i))$ is bounded by $p^{c_{p,i}'}$.
    As in the case $l\neq p$, we have
    $$
    H^0_\et(F,\Q_p/\Z_p(i))\simeq \Z/p^{c_{p,i}}
    ,\ H_\et^{2}(F,\Q_p/\Z_p(i))=0 
    $$
    for some $c_{p,i}\leq c'_{p,i}$.
    By local Tate duality, we have
    $$
    H^1_\et(F,\Q_p/\Z_p(i))\simeq H^1_\et(F,\Z_p(1-i))^\vee.
    $$
    From the Euler characteristic formula for $\Q_p(1-i)$, we have
    $$
    \dim_{\Q_p}H^0(F,\Q_p(1-i))-\dim_{\Q_p}H^1_\et(F,\Q_p(1-i))+\dim_{\Q_p}H^2_\et(F,\Q_p(1-i))=-[F:\Q_p].
    $$
    This gives $\dim_{\Q_p}H^1_\et(F,\Q_p(1-i))=[F:\Q_p]$, and hence we have
    $$
    H^1_\et(F,\Q_p/\Z_p(i))\simeq H^1_\et(F,\Z_p(1-i))^\vee\simeq (\text{finite})\oplus \left(\Q_p/\Z_p\right)^{[F:\Q_p]}.
    $$
\end{proof}

We determine the structure of the higher Chow group of a local field $F$.

\begin{lem}\label{lem:CH-loc2}
Let $F$ be a local field with residue field $k$, and put $p=\ch(k)$.
Assume that $i\ge2$. Then the following hold.
\begin{enumerate}
\item If $2i-j=2$ and $\ch(F)=0$, then there is a canonical short
exact sequence
\[
0\to CH^i(F,j)_{\mathrm{tor}}
\to CH^i(F,j)
\to U\to0,
\]
where
\[
U:=CH^i(F,j)/CH^i(F,j)_{\mathrm{tor}}
\]
is uniquely divisible, and there is a non-canonical isomorphism
\[
CH^i(F,j)_{\mathrm{tor}}
\simeq G\oplus(\Q_p/\Z_p)^r
\]
for some finite group $G$ and some integer
$0\le r\le[F:\Q_p]$. The short exact sequence splits
non-canonically.
\item If $2i-j=1$ and $\ch(F)=0$, then
$CH^i(F,j)_{\mathrm{tor}}$ is finite. Moreover,
$CH^i(F,j)\{p'\}$ is finite, and there is a canonical short exact
sequence
\[
0\to CH^i(F,j)\{p'\}
\to CH^i(F,j)
\to D\to0,
\]
where
\[
D:=CH^i(F,j)/CH^i(F,j)\{p'\}
\]
is uniquely $p'$-divisible. This sequence splits non-canonically.
\item If $2i-j\in\{1,2\}$ and $\ch(F)>0$, then
$CH^i(F,j)_{\mathrm{tor}}$ is finite of order prime to $p$, and there
is a canonical short exact sequence
\[
0\to CH^i(F,j)_{\mathrm{tor}}
\to CH^i(F,j)
\to D\to0,
\]
where
\[
D:=CH^i(F,j)/CH^i(F,j)_{\mathrm{tor}}
\]
is uniquely divisible. This sequence splits non-canonically.
\item In all other cases, $CH^i(F,j)$ is uniquely divisible.
\end{enumerate}
\end{lem}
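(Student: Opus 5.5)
The plan is to specialize the general results of \autoref{sec:main} to $d=0$ and $X=\Spec(F)$, with $s=2$. Recall that $\cd_l(F)=2$ for every prime $l\neq\ch(F)$; this includes $l=p$ when $\ch(F)=0$. When $\ch(F)=p>0$ we have $[F:F^p]=p$, so \autoref{prop:uniq-p-div} with $s=2$ shows that $CH^i(F,j)$ is uniquely $p$-divisible for $i\ge2$. This disposes of the $p$-part in characteristic $p$ once and for all. Now fix a prime $l\neq\ch(F)$. Since $i\ge2=s$, \autoref{lem:cycl} together with \eqref{eq:coeff} gives exact sequences
\[
0\to CH^i(F,j+1)/l^n\to H^{2i-j-1}_{\et}(F,\Z/l^n(i))\to CH^i(F,j)[l^n]\to0 .
\]
Passing to the direct limit over $n$ yields a surjection $H^{2i-j-1}_{\et}(F,\Q_l/\Z_l(i))\twoheadrightarrow CH^i(F,j)\{l\}$. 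In the same way we obtain injections $CH^i(F,j)/l^n\hookrightarrow H^{2i-j}_{\et}(F,\Z/l^n(i))$. The proof then consists of feeding \autoref{lem:coh-loc} into these two maps case by case.

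\emph{Case $2i-j\ge3$ or $2i-j<0$.} By \autoref{prop:l-div}, $CH^i(F,j)$ is uniquely $l$-divisible when $2i-j\ge4$ or $2i-j<0$, and it is $l$-divisible when $2i-j=3$. When $2i-j=3$, the $l$-primary torsion is a quotient of $H^2_{\et}(F,\Q_l/\Z_l(i))$, which vanishes by \autoref{lem:coh-loc} (for $l=p$ this uses $\ch(F)=0$). So $CH^i(F,j)$ is again uniquely $l$-divisible.

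\emph{Case $2i-j=0$.} Here \autoref{prop:l-div} gives that $CH^i(F,2i)$ is $l$-torsion free. In the limit sequence for $(j,j-1)=(2i,2i-1)$, the group $CH^i(F,2i)\otimes\Q_l/\Z_l$ injects into $H^0_{\et}(F,\Q_l/\Z_l(i))$. That target is finite by \autoref{lem:coh-loc}, while the source is divisible, so $CH^i(F,2i)\otimes\Q_l/\Z_l=0$. Because the group is $l$-torsion free, tensoring $0\to\Z/l\to\Q_l/\Z_l\xrightarrow{l}\Q_l/\Z_l\to0$ with it shows that $CH^i(F,2i)/l$ injects into $CH^i(F,2i)\otimes\Q_l/\Z_l=0$. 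Hence the group is $l$-divisible, and so uniquely $l$-divisible. Running over all $l$, and using \autoref{prop:uniq-p-div} in characteristic $p$, proves (4).

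\emph{Cases $2i-j\in\{1,2\}$.} For these I would imitate \autoref{lem:fin-structure} via \autoref{lem:RS}.

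When $2i-j=2$:
\begin{enumerate}
\item The injection $CH^i(F,j)/m\hookrightarrow H^2_{\et}(F,\Z/m(i))$ holds for $m$ prime to $\ch(F)$. By local Tate duality, the target is dual to $H^0_{\et}(F,\Z/m(1-i))$. Its order is bounded independently of $m$, and its $l$-part vanishes for all $l$ outside the finite set of primes dividing $p(q^{i-1}-1)$ (the $p$-part being controlled by the argument in \autoref{lem:coh-loc}).
\item Hence the inverse limit of the $CH^i(F,j)/m$ is finite, and \autoref{lem:RS} gives $CH^i(F,j)\simeq G'\oplus D$ with $G'$ finite and $D$ divisible. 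In characteristic $p$ the $p$-divisibility comes from \autoref{prop:uniq-p-div}.
\item The torsion of $CH^i(F,j)$ is a quotient of $H^1_{\et}(F,\Q/\Z(i))$. By \autoref{lem:coh-loc} this is a finite group, with trivial $l$-part for almost all $l$, direct sum (in characteristic $0$) with $(\Q_p/\Z_p)^{[F:\Q_p]}$.
\item Therefore $D_{\mathrm{tor}}$ is finite and divisible away from $p$, hence zero away from $p$. Its $p$-part is a divisible quotient of $(\Q_p/\Z_p)^{[F:\Q_p]}$ modulo a finite group, hence $\simeq(\Q_p/\Z_p)^r$ with $r\le[F:\Q_p]$; in characteristic $p$ it is $0$.
\item Since divisible groups are injective, $D_{\mathrm{tor}}$ splits off $D$ with a uniquely divisible complement $U$. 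This gives the torsion subgroup $G'\oplus(\Q_p/\Z_p)^r$ and the split sequence in (1) and (3).
\end{enumerate}

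When $2i-j=1$, the same scheme applies with $H^1_{\et}(F,\Z/m(i))$ bounding $CH^i(F,j)/m$ for $m$ prime to $p$, using $c_{l,i}=c_{l,i-1}=0$ for almost all $l$. The torsion is now a quotient of $H^0_{\et}(F,\Q/\Z(i))$, which is finite including its $p$-part. Applying \autoref{lem:RS} with $\Sigma=\{l\neq p\}$ gives a finite prime-to-$p$ summand and a $p'$-divisible $D$ whose $p'$-torsion is finite and divisible, hence zero. This proves (2), and (3) follows after adding unique $p$-divisibility in characteristic $p$.

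The step I expect to be most delicate is the uniform boundedness in the case $2i-j=2$. One must check carefully that $H^2_{\et}(F,\Z/m(i))$ has order bounded independently of $m$, including its $p$-primary part when $\ch(F)=0$ (via the bound $c'_{p,1-i}$ coming from an open subgroup of the cyclotomic character). One must also pin down the divisible $p$-torsion as exactly $(\Q_p/\Z_p)^r$ rather than some other quotient.
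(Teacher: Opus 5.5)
Your proposal is correct and is essentially the paper's proof. It uses the same finite-coefficient exact sequences via \autoref{lem:cycl}, the same Galois cohomology input from \autoref{lem:coh-loc}, the same application of \autoref{lem:RS}, and the same ``finite divisible torsion is zero'' step. The differences are cosmetic: you bound $|H^{2i-j}(F,\Z/m(i))|$ uniformly in $m$ rather than passing to $\varprojlim_n$ and $\Z_l$-coefficients via Tate duality, you handle $2i-j=0$ by a direct-limit argument using $l$-torsion-freeness, and you rightly note that the case $i=j\ge3$ already falls under $2i-j\ge3$, so the Milnor $K$-theory theorems are not needed.
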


\begin{proof}
\smallskip
\noindent
(1) Assume that $2i-j=2$ and $\ch(F)=0$. For every prime $l$, as in
\eqref{ex:modl}, there is an injection
\[
CH^i(F,j)/l^n\hookrightarrow H^2_{\et}(F,\Z/l^n(i)).
\]
Passing to the inverse limit gives an injection
\[
\varprojlim_n CH^i(F,j)/l^n\hookrightarrow H^2_{\et}(F,\Z_l(i)).
\]
Here and below, $H^r_{\et}(-,\Z_l(t))$ denotes continuous $l$-adic
cohomology. Since the cohomology groups with finite coefficients are
finite, the relevant inverse systems satisfy the Mittag--Leffler
condition. By local Tate duality (\cite[Chapter VII, Theorem~7.2.9]{NSW08}),
\[
H^2_{\et}(F,\Z_l(i))
\simeq H^0_{\et}(F,\Q_l/\Z_l(1-i))^\vee,
\]
which is finite and trivial for all but finitely many primes by
\autoref{lem:coh-loc}. 
Here $\vee$ denotes the Pontryagin dual.
Therefore
\[
\varprojlim_{m\ge1} CH^i(F,j)/m
\simeq
\prod_l\varprojlim_n CH^i(F,j)/l^n
\]
is finite. Applying \autoref{lem:RS} with $\Sigma$ equal to the set of
all primes, we obtain a non-canonical decomposition
\[
CH^i(F,j)\simeq G\oplus D,
\]
where $G$ is finite and $D$ is divisible.

By the coefficient sequence with $\Q_l/\Z_l$-coefficients, there is a
surjection
\[
H^1_{\et}(F,\Q_l/\Z_l(i))
\to CH^i(F,j)[l^\infty].
\]
The structure theorem for divisible groups gives
\[
D\simeq U\oplus\bigoplus_l(\Q_l/\Z_l)^{I_l}
\]
for some $\Q$-vector space $U$. By \autoref{lem:coh-loc}, one has
$I_l=0$ for $l\neq p$ and $I_p\le[F:\Q_p]$. Consequently,
\[
CH^i(F,j)_{\mathrm{tor}}
\simeq G\oplus(\Q_p/\Z_p)^r
\]
for some $0\le r\le[F:\Q_p]$, and the quotient by the torsion
subgroup is uniquely divisible. This proves (1).

\smallskip
\noindent
(2) Assume that $2i-j=1$ and $\ch(F)=0$. For every prime
$l\neq p$, the coefficient exact sequence and the cycle class map give
an injection
\[
CH^i(F,j)/l^n
\hookrightarrow
H^1_{\et}(F,\Z/l^n(i)).
\]
Passing to the inverse limit over $n$, we obtain an injection
\[
\varprojlim_n CH^i(F,j)/l^n
\hookrightarrow
H^1_{\et}(F,\Z_l(i)).
\]
By local Tate duality,
\[
H^1_{\et}(F,\Z_l(i))
\simeq
H^1_{\et}(F,\Q_l/\Z_l(1-i))^\vee.
\]
By \autoref{lem:coh-loc}, this group is finite and is zero for all
but finitely many primes $l\neq p$. Therefore, for
$\Sigma=\{l\mid l\neq p\}$,
\[
\varprojlim_{m\in\mathbb N_\Sigma} CH^i(F,j)/m
\simeq
\prod_{l\neq p}\varprojlim_n CH^i(F,j)/l^n
\]
is finite. Applying \autoref{lem:RS} with this set $\Sigma$, we obtain
a non-canonical decomposition
\[
CH^i(F,j)\simeq G\oplus D,
\]
where $G$ is finite of order prime to $p$ and $D$ is
$p'$-divisible.

On the other hand, applying the coefficient exact sequence to
$CH^i(F,j+1)$ and passing to the direct limit gives, for every prime
$l$, a surjection
\[
H^0_{\et}(F,\Q_l/\Z_l(i))
\twoheadrightarrow
CH^i(F,j)[l^\infty].
\]
By \autoref{lem:coh-loc}, the group on the left is finite for every
$l$ and is zero for all but finitely many primes $l$. Hence
$CH^i(F,j)_{\mathrm{tor}}$ is finite.

For every prime $l\neq p$, the subgroup $D[l^\infty]$ is divisible,
because $D$ is $l$-divisible. Moreover,
\[
D[l^\infty]\subset CH^i(F,j)_{\mathrm{tor}},
\]
so $D[l^\infty]$ is finite. A finite divisible group is zero, and
therefore
\[
D[l^\infty]=0
\qquad(l\neq p).
\]
Thus $D$ is uniquely $p'$-divisible, and the above decomposition
shows that
\[
G=CH^i(F,j)\{p'\}.
\]
Consequently, the canonical short exact sequence
\[
0\to CH^i(F,j)\{p'\}
\to CH^i(F,j)
\to CH^i(F,j)/CH^i(F,j)\{p'\}\to0
\]
has uniquely $p'$-divisible quotient and splits non-canonically. This
proves (2).

\smallskip
\noindent
(3) Assume that $\ch(F)=p>0$ and $2i-j\in\{1,2\}$. The arguments in
(1) and (2), applied to primes $l\neq p$, give a non-canonical
decomposition
\[
CH^i(F,j)\simeq G\oplus D,
\]
where $G$ is finite of order prime to $p$ and $D$ is uniquely
$p'$-divisible. Since $[F:F^p]=p$ and $i\ge2$,
\autoref{prop:uniq-p-div}, applied with $s=2$ to $\Spec(F)$, shows that
$CH^i(F,j)$ is uniquely $p$-divisible. Hence $D$ is uniquely divisible and
$G=CH^i(F,j)_{\mathrm{tor}}$. This proves (3).

\smallskip
\noindent
(4) Suppose first that $i=j\ge3$. Then
$CH^i(F,j)\simeq K_i^M(F)$, and the assertion follows from Moore's
theorem \cite[Chapter~IX, Theorem~4.3]{FV02}, Merkur'ev's theorem
\cite[Chapter~IX, Theorem~4.7]{FV02}, and Sivitskii's theorem
\cite[Chapter~IX, Theorem~4.11]{FV02}.

Now let $l\neq\ch(F)$. Since $\cd_l(F)=2$,
\autoref{prop:l-div} shows that $CH^i(F,j)$ is uniquely
$l$-divisible if $2i-j\notin\{0,1,2,3\}$. If $2i-j=3$, then the
group is $l$-divisible and there is a surjection
\[
H^2(F,\Q_l/\Z_l(i))\twoheadrightarrow CH^i(F,j)[l^\infty].
\]
By \autoref{lem:coh-loc}, the source vanishes, so the group is
uniquely $l$-divisible. If $2i-j=0$, then
\[
\varprojlim_n CH^i(F,j)/l^n
\hookrightarrow H^0_{\et}(F,\Z_l(i))
\simeq H^2_{\et}(F,\Q_l/\Z_l(1-i))^\vee=0.
\]
This implies
$CH^i(F,j)/l^n=0$ for every $n$, and in particular
$CH^i(F,j)/l=0$.
Together with \autoref{prop:l-div}, this implies that
$CH^i(F,j)$ is uniquely $l$-divisible. In equal characteristic, since $[F:F^p]=p$ and $i\ge2$,
\autoref{prop:uniq-p-div}, applied with $s=2$ to $\Spec(F)$,
shows that $CH^i(F,j)$ is uniquely $p$-divisible.
Thus $CH^i(F,j)$ is uniquely divisible in all remaining cases.
\end{proof}

The following consequence of the finiteness theorem of
Gupta--Krishna--Rathore supplies the structure in the intermediate
range.

\begin{lem}\label{lem:local-structure}
Let $F$ be a local field with residue field $k$, and put
$p=\ch(k)$. Suppose that $i\ge2$. Then
$CH^{d+i}(X,j)\{p'\}$ is finite, and there is a canonical short exact
sequence
\[
0\to CH^{d+i}(X,j)\{p'\}
\to CH^{d+i}(X,j)
\to D\to0,
\]
where
\[
D:=CH^{d+i}(X,j)/CH^{d+i}(X,j)\{p'\}
\]
is uniquely $p'$-divisible. This sequence splits non-canonically. If
$\ch(F)>0$, then $D$ is uniquely divisible.
\end{lem}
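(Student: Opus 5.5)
The plan is to mimic the proof of \autoref{lem:fin-structure}, replacing Kahn's finiteness theorem by the finiteness theorem of Gupta--Krishna--Rathore. Let $\Sigma=\{l\mid l\neq p\}$ and write $(\Q/\Z)'=\bigoplus_{l\neq p}\Q_l/\Z_l$. Note that $\cd_l(F)=2$ for every $l\neq\ch(F)$, so the hypothesis $i\ge2$ puts us in the range $d+i\ge d+\cd_l(F)$. Hence, for every $m\in\mathbb N_\Sigma$ (so $m$ is prime to $\ch(F)$), \autoref{lem:cycl} identifies $CH^{d+i}(X,j;\Z/m)$ with $H^{2(d+i)-j}_{\et}(X,\Z/m(d+i))$. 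The sequence \eqref{eq:coeff} then gives an injection of $CH^{d+i}(X,j)/m$ into this group.

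The key input is finiteness of the groups $H^b_{\et}(X,(\Q/\Z)'(d+i))$ for all $b$, which I will take from the finiteness theorem of Gupta--Krishna--Rathore over local fields (\cite{GKR22}, \cite{GKR26}). As a fallback one can argue directly. By the Hochschild--Serre spectral sequence, these groups are built from $H^a(F,H^b_{\et}(X_{F^{\sep}},\Q_l/\Z_l(d+i)))$ with $a\le2$. The groups with $a=0,2$ are finite by local Tate duality and weight considerations, since the twist $d+i$ exceeds $b/2$ when $i\ge1$. Vanishing for almost all $l$ needs a uniformity argument, and this is the step I expect to be the main obstacle; I would simply invoke the cited theorem.

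Granting this finiteness, the Bockstein triangle $\Z/m(d+i)\to(\Q/\Z)'(d+i)\xrightarrow{m}(\Q/\Z)'(d+i)$ bounds $|H^{2(d+i)-j}_{\et}(X,\Z/m(d+i))|$ independently of $m$. Two consequences follow. First, $\varprojlim_{m\in\mathbb N_\Sigma}CH^{d+i}(X,j)/m$ is finite. Second, applying \eqref{eq:coeff} to $j+1$ and passing to the limit yields a surjection
\[
H^{2(d+i)-j-1}_{\et}\left(X,(\Q/\Z)'(d+i)\right)\twoheadrightarrow CH^{d+i}(X,j)\{p'\},
\]
so $CH^{d+i}(X,j)\{p'\}$ is finite. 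Now \autoref{lem:RS} gives $CH^{d+i}(X,j)\simeq G\oplus D$, with $G$ finite of order prime to $p$ and $D$ being $p'$-divisible. For $l\neq p$, the group $D[l^\infty]$ is divisible and sits inside the finite group $CH^{d+i}(X,j)\{p'\}$, so it vanishes. Hence $D$ is uniquely $p'$-divisible, and therefore $G=CH^{d+i}(X,j)\{p'\}$. This identifies $D$ with the canonical quotient, which gives both the canonical exact sequence and its non-canonical splitting.

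Finally, suppose $\ch(F)=p>0$. Then $[F:F^p]=p$, so \autoref{prop:uniq-p-div} applies with $s=2$ since $i\ge2$, and shows that $CH^{d+i}(X,j)$ is uniquely $p$-divisible. The direct summand $D$ is then uniquely $p$-divisible as well. Combined with unique $p'$-divisibility, $D$ is uniquely divisible.
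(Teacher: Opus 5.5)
Your proposal is correct and follows essentially the same route as the paper. Both arguments use the Gupta--Krishna--Rathore finiteness of $H^b_{\et}(X,(\Q/\Z)'(d+i))$, the injection $CH^{d+i}(X,j)/m\hookrightarrow H^{2(d+i)-j}_{\et}(X,\Z/m(d+i))$, the surjection onto $CH^{d+i}(X,j)\{p'\}$, \autoref{lem:RS}, the ``finite and divisible implies zero'' step, and \autoref{prop:uniq-p-div} in positive characteristic. The only minor difference is that the paper takes the uniform bound on $|H^b_{\et}(X,\Z/m(d+i))|$ directly from \cite[Corollary~7.11]{GKR22}, whereas you derive it from the finiteness of the $(\Q/\Z)'$-coefficient groups via the Bockstein sequence, as in the proof of \autoref{lem:fin-structure}; both work.
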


\begin{proof}
Since $i\ge2$, we have $d+i\ge d+2$. Write
\[
(\Q/\Z)'=\bigoplus_{l\neq p}\Q_l/\Z_l.
\]
Since $X$ is proper, cohomology with compact support agrees with
ordinary \'etale cohomology. By \cite[Theorem~7.10]{GKR22}, the groups
$H^b_{\et}\left(X,(\Q/\Z)'(d+i)\right)$ 
are finite for every integer $b$. Moreover,
\cite[Corollary~7.11]{GKR22} gives a constant $M(i,b)$ such that
\[
\left|
H^b_{\et}\left(X,\Z/m(d+i)\right)
\right|
\le M(i,b)
\]
for every integer $m$ prime to $p$.
By \autoref{lem:cycl} and \eqref{eq:coeff}, there is an injection
\[
CH^{d+i}(X,j)/m
\hookrightarrow
H^{2(d+i)-j}_{\et}\left(X,\Z/m(d+i)\right).
\]
Thus the orders of $CH^{d+i}(X,j)/m$ are uniformly bounded as $m$
ranges over the integers prime to $p$.

Applying \eqref{eq:coeff} to $j+1$ and passing to the direct limit over
integers $m$ prime to $p$, we obtain a surjection
\[
H^{2(d+i)-j-1}_{\et}
\left(X,(\Q/\Z)'(d+i)\right)
\twoheadrightarrow
CH^{d+i}(X,j)\{p'\}.
\]
Hence $CH^{d+i}(X,j)\{p'\}$ is finite.

The uniform boundedness above implies that, for
$\Sigma=\{l\mid l\neq p\}$,
\[
\varprojlim_{m\in\mathbb N_\Sigma} CH^{d+i}(X,j)/m
\]
is finite. Applying \autoref{lem:RS} with this set $\Sigma$, we obtain
a non-canonical decomposition
\[
CH^{d+i}(X,j)\simeq G\oplus D,
\]
where $G$ is a finite group of order prime to $p$ and $D$ is
$p'$-divisible.

We show that $D$ is uniquely $p'$-divisible. Let $l\neq p$ be a
prime. Since $D$ is $l$-divisible, its $l$-primary torsion subgroup
$D[l^\infty]$ is divisible. On the other hand, since $D$ is a direct
summand of $CH^{d+i}(X,j)$, we have
$D[l^\infty]
\subset
CH^{d+i}(X,j)\{p'\}$.
The group on the right is finite. Hence $D[l^\infty]$ is both finite
and divisible, and therefore
$D[l^\infty]=0$.
Thus multiplication by $l$ on $D$ is both surjective and injective.
Since this holds for every prime $l\neq p$, the group $D$ is uniquely
$p'$-divisible.
Since $D$ has no prime-to-$p$ torsion, the above decomposition gives
$G=CH^{d+i}(X,j)\{p'\}$.
If $\ch(F)=p>0$, then $[F:F^p]=p$, and
\autoref{prop:uniq-p-div} shows that $CH^{d+i}(X,j)$ is uniquely
$p$-divisible. Since $D$ is a direct summand of $CH^{d+i}(X,j)$, it
is also uniquely $p$-divisible. Since $D$ is already uniquely
$p'$-divisible, it is uniquely divisible.
\end{proof}

We now apply \autoref{lem:CH-loc2}, \autoref{lem:local-structure} and the results of the previous section to the higher Chow group of smooth proper schemes over a local field $F$.

\begin{cor}\label{cor:CH-loc}
Let $F$ be a local field with residue field $k$, and put $p=\ch(k)$.
Let $X$ be a smooth proper and geometrically irreducible scheme over
$F$ of dimension $d$. Assume that $i\ge2$. Then the following holds.
\begin{enumerate}
\item If $2i-j\ge3$, then $CH^{d+i}(X,j)$ is uniquely divisible.
\item If $2i-j=2$, there is a canonical short exact sequence
\[
0\to A^{d+i}(X,j)
\to CH^{d+i}(X,j)
\xrightarrow{f_*}CH^i(F,j)\to0.
\]
This sequence splits non-canonically.
\item If $-2d<2i-j\le1$, then $CH^{d+i}(X,j)\{p'\}$ is finite and
there is a canonical short exact sequence
\[
0\to CH^{d+i}(X,j)\{p'\}
\to CH^{d+i}(X,j)
\to D\to0,
\]
where
\[
D:=CH^{d+i}(X,j)/CH^{d+i}(X,j)\{p'\}
\]
is uniquely $p'$-divisible. If $\ch(F)>0$, then $D$ is uniquely
divisible. This sequence splits non-canonically.
\item If $2i-j\le-2d$, then $CH^{d+i}(X,j)$ is uniquely divisible.
\end{enumerate}
\end{cor}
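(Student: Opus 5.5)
The plan is to combine \autoref{thm:CH-div} (with $s=\cd_l(F)=2$ for every $l\neq\ch(F)$), \autoref{prop:uniq-p-div} (with $s=2$, since $[F:F^p]=p$ when $\ch(F)=p$), \autoref{lem:CH-loc2} for the base, and \autoref{lem:local-structure}, treating the four ranges of $2i-j$ separately.

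Case (1), $2i-j\ge3$. For $l\neq\ch(F)$, \autoref{thm:CH-div}\,(2) gives that $A^{d+i}(X,j)$ is uniquely $l$-divisible, since $2i-j>s=2$. By \autoref{lem:CH-loc2}\,(4), $CH^i(F,j)$ is uniquely divisible in this range, so $f_*$ has uniquely divisible target. In the range $2i-j\ge4$ we could also use \autoref{prop:l-div} directly. For the extension $0\to A\to CH^{d+i}(X,j)\to\operatorname{Im}(f_*)\to0$, we have $\operatorname{Im}(f_*)\subset CH^i(F,j)$, which is torsion free, and surjectivity on $l$-torsion holds by \autoref{thm:CH-div}\,(1). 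We also need $\operatorname{Im}(f_*)$ to be $l$-divisible. When $2i-j\ge4$ this follows from the divisibility of $CH^{d+i}(X,j)$ given by \autoref{prop:l-div}. When $2i-j=3=s+1$, \autoref{prop:l-div}\,(2) still gives $l$-divisibility of $CH^{d+i}(X,j)$. So the image is uniquely $l$-divisible, and $CH^{d+i}(X,j)$ is uniquely $l$-divisible as an extension.

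For $l=p$ with $\ch(F)=p$, \autoref{prop:uniq-p-div} finishes. For $l=p$ with $\ch(F)=0$, we have $\cd_p(F)=2$ as well, so the same argument applies with $l=p$. This is because \autoref{lem:cycl} and \autoref{thm:CH-div} only require $l\neq\ch(F)$.

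Case (2), $2i-j=2=s$. \autoref{thm:CH-div}\,(1) gives surjectivity of $f_*$ on all $l$-primary torsion, and \autoref{thm:CH-div}\,(3) gives $CH^{d+i}(X,j)/l^n\simeq CH^i(F,j)/l^n$ for all primes $l$ (again $l=p$ is allowed when $\ch(F)=0$; when $\ch(F)=p$ both sides are uniquely $p$-divisible). Surjectivity of $f_*$ is the main point. The base has the form $CH^i(F,j)\simeq T\oplus U$ with $T$ torsion and $U$ uniquely divisible by \autoref{lem:CH-loc2}. Then $\operatorname{Im}(f_*)$ contains $T$ (torsion surjectivity) and is dense modulo every $m$. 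Since $U$ is a $\Q$-vector space, mod-$m$ information does not see it, and this is the step I expect to be the obstacle.

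To get around it, I will first try to prove that $A^{d+i}(X,j)$ is uniquely divisible. \autoref{thm:CH-div}\,(2) gives divisibility. For torsion, \autoref{thm:CH-div}\,(3) gives a surjection from $H^2(F,H^{2d-1}(X_{F^{\sep}},\Q_l/\Z_l(d+i)))$, which is Tate-dual to $H^0(F,T_l(\cdots)(1-d-i))$ and vanishes by a weight argument (Frobenius weights on the dual Tate module are nonzero, as in \autoref{cor:fin2}). Then $A$ is injective and the sequence splits once $f_*$ is surjective. For surjectivity onto $U$ I would use a zero-cycle of degree $e$ and a norm argument: $f_*\circ(\text{section over }F')$ composed with the norm $\pi_*$ gives $e\cdot\mathrm{id}$ on $CH^i(F,j)$. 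Since $U$ is divisible, $U\subset\operatorname{Im}(f_*)$, and together with $T$ this gives surjectivity.

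Case (3), $-2d<2i-j\le1$, is exactly \autoref{lem:local-structure}.

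Case (4), $2i-j\le-2d$. For $2i-j<-2d$, \autoref{prop:l-div}\,(4) applies for all $l\neq\ch(F)$ (including $p$ in characteristic $0$), and \autoref{prop:uniq-p-div} applies in characteristic $p$. For $2i-j=-2d$ the group is torsion free by \autoref{prop:l-div}\,(3) and \autoref{prop:uniq-p-div}. In characteristic $0$, $l$-divisibility for every $l$ follows from the decomposition of \autoref{lem:local-structure} for $l\neq p$, where the finite summand is zero since the group is torsion free. For $l=p$ in characteristic $0$, I would repeat the argument of \autoref{lem:local-structure} with $p$-adic coefficients, bounding $H^{2d+2i-j}(X,\Z/p^n)=H^0$ via its finite $H^0$. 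This last point is a secondary technical check.
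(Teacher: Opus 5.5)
Your proposal is correct in substance. For (1), (3) and the core of (2) it follows the paper's argument. In (1) you combine $l$-divisibility from \autoref{prop:l-div} with the torsion control of \autoref{thm:CH-div}\,(1). In (2) the torsion of $CH^i(F,j)$ is hit by \autoref{thm:CH-div}\,(1), the uniquely divisible part is hit using $F'/F$ with $X(F')\neq\emptyset$ and $\Cor_{F'/F}\circ\res_{F'/F}=[F':F]$, and the sequence splits because $A^{d+i}(X,j)$ is divisible.

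Two comments on (2). First, your detour through torsion-freeness of $A^{d+i}(X,j)$ is unnecessary, since divisibility alone makes $A^{d+i}(X,j)$ injective. The justification you sketch also would not suffice as stated. Over a local field the $G_F$-action on $H^{2d-1}_{\et}(X_{F^{\sep}},\Q_l/\Z_l(d+i))$ is in general ramified, and for $l=p$ with $\ch(F)=0$ there is no Frobenius-weight argument at all. The paper proves this vanishing only in \autoref{lem:E_2-term}, for \autoref{cor:A-loc}. It uses the Albanese, semistable reduction and rigid uniformization, the Weil conjectures for $l\neq p$, and Hodge--Tate weights for $l=p$. Simply drop that step.

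Second, \autoref{thm:CH-div}\,(2) gives $l$-divisibility of $A^{d+i}(X,j)$ only for $l\neq\ch(F)$. When $\ch(F)=p$ you also need the kernel argument, which you do not state. Write $a=pb$; then $pf_*(b)=0$, and $CH^i(F,j)$ is $p$-torsion free by \autoref{prop:uniq-p-div}, so $b\in A^{d+i}(X,j)$.

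In (4) with $2i-j=-2d$ your route differs from the paper's. The paper treats every $l\neq\ch(F)$ uniformly. It injects $\varprojlim_n CH^{d+i}(X,j)/l^n$ into $H^0(F,\Z_l(d+i))\simeq H^2(F,\Q_l/\Z_l(1-d-i))^\vee$, which is $0$ by \autoref{lem:coh-loc}. Then \autoref{lem:RS} plus torsion-freeness finishes. You instead invoke \autoref{lem:local-structure}, and hence the Gupta--Krishna--Rathore finiteness, for $l\neq p$, and a bounded-$H^0$ argument for $l=p$. Both routes are valid. However, your $l=p$ argument works verbatim for every $l\neq\ch(F)$, because $H^0(F,\Z/l^n(d+i))\subset H^0(F,\Q_l/\Z_l(d+i))$ is finite by \autoref{lem:coh-loc}. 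So the heavy finiteness theorem is not needed there, and the uniform version of your argument is essentially the paper's.
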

 
\begin{proof}
\smallskip
\noindent
(1) If $2i-j\ge4$, the assertion follows from
\autoref{prop:l-div} and, in equal characteristic,
\autoref{prop:uniq-p-div}, because $\cd_l(F)=2$ for every prime
$l\neq\ch(F)$ and $[F:F^p]=p$ if $\ch(F)>0$.

Assume that $2i-j=3$. By \autoref{prop:l-div},
$CH^{d+i}(X,j)$ is $l$-divisible for every prime
$l\neq\ch(F)$. Fix such a prime $l$. Since $\cd_l(F)=2$,
\autoref{thm:CH-div}\,(2) shows that $A^{d+i}(X,j)$ is uniquely
$l$-divisible. Moreover, \autoref{lem:CH-loc2} shows that
$CH^i(F,j)$ is uniquely $l$-divisible. Since
$2i-j=3\ge\cd_l(F)$, \autoref{thm:CH-div}\,(1) gives a short exact
sequence
\[
0\to A^{d+i}(X,j)[l]
\to CH^{d+i}(X,j)[l]
\xrightarrow{f_*}CH^i(F,j)[l]
\to0.
\]
Hence $CH^{d+i}(X,j)[l]=0$. Together with the $l$-divisibility
proved above, this shows that $CH^{d+i}(X,j)$ is uniquely
$l$-divisible. If $\ch(F)=p>0$, then
\autoref{prop:uniq-p-div} shows that it is uniquely $p$-divisible.
Thus $CH^{d+i}(X,j)$ is uniquely divisible.

\smallskip
\noindent
(2) Assume that $2i-j=2$. We first show that
$A^{d+i}(X,j)$ is divisible. Let $l\neq\ch(F)$ be a prime. Since
$\cd_l(F)=2$, \autoref{thm:CH-div}\,(2) shows that
$A^{d+i}(X,j)$ is $l$-divisible. If $\ch(F)=0$, this holds for every
prime $l$. Suppose that $\ch(F)=p>0$. Since $[F:F^p]=p$ and
$i\ge2$, \autoref{prop:uniq-p-div}, applied to both $X$ and
$\Spec(F)$, shows that $CH^{d+i}(X,j)$ and $CH^i(F,j)$ are uniquely
$p$-divisible. Let $a\in A^{d+i}(X,j)$ and write $a=pb$ with
$b\in CH^{d+i}(X,j)$. Since $pf_*(b)=f_*(a)=0$ and $CH^i(F,j)$ is
$p$-torsion free, we have $f_*(b)=0$. Hence
$b\in A^{d+i}(X,j)$, so $A^{d+i}(X,j)$ is $p$-divisible. Therefore
$A^{d+i}(X,j)$ is divisible in both cases.

We claim that $f_*$ is surjective. Let $CH^i(F,j)_{\mathrm{div}}$
denote the maximal divisible subgroup of $CH^i(F,j)$. By
\autoref{lem:CH-loc2}, there is a non-canonical decomposition
\[
CH^i(F,j)\simeq G\oplus CH^i(F,j)_{\mathrm{div}},
\]
where $G$ is finite. By \autoref{thm:CH-div}\,(1), the map $f_*$ is
surjective on the $l$-primary torsion subgroup for every prime
$l\neq\ch(F)$. If $\ch(F)=p>0$, then
\autoref{prop:uniq-p-div}, applied with $s=2$ to $\Spec(F)$,
shows that $CH^i(F,j)$ is uniquely $p$-divisible. In particular,
it has no nonzero $p$-primary torsion. Hence $G$ is contained in
the image of $f_*$. Choose a finite separable extension
$F'/F$ such that $X(F')\neq\emptyset$, put
$X_{F'}=X\otimes_FF'$, and let $\pi\colon X_{F'}\to X$ be the
projection. The structure morphism
$f'\colon X_{F'}\to\Spec(F')$ admits a section, so
$f'_*\colon CH^{d+i}(X_{F'},j)\to CH^i(F',j)$ is surjective. Let
$a\in CH^i(F,j)_{\mathrm{div}}$. Choose
$b\in CH^i(F,j)_{\mathrm{div}}$ such that $[F':F]b=a$, and choose
$z\in CH^{d+i}(X_{F'},j)$ such that
$f'_*(z)=\res_{F'/F}(b)$. From the commutative diagram
\[
\xymatrix{
CH^{d+i}(X_{F'},j)\ar@{->>}[r]^-{f'_*}\ar[d]_{\pi_*}
&CH^i(F',j)\ar[d]^{\Cor_{F'/F}}\\
CH^{d+i}(X,j)\ar[r]^-{f_*}&CH^i(F,j),
}
\]
we obtain
\[
f_*(\pi_*(z))
=\Cor_{F'/F}(f'_*(z))
=\Cor_{F'/F}\bigl(\res_{F'/F}(b)\bigr)
=a.
\]
Thus $f_*$ is surjective, and there is a short exact sequence
\[
0\to A^{d+i}(X,j)
\to CH^{d+i}(X,j)
\xrightarrow{f_*}CH^i(F,j)
\to0.
\]
Since $A^{d+i}(X,j)$ is divisible, this sequence splits
non-canonically.

\smallskip
\noindent
(3) This follows directly from \autoref{lem:local-structure}.

\smallskip
\noindent
(4) If $2i-j<-2d$, the assertion follows from
\autoref{prop:l-div} and, in equal characteristic,
\autoref{prop:uniq-p-div}.

Assume that $2i-j=-2d$. For every prime
$l\neq\ch(F)$, there is an injection
\[
CH^{d+i}(X,j)/l^n
\hookrightarrow H^0_{\et}(X,\Z/l^n(d+i))
\simeq H^0(F,\Z/l^n(d+i)).
\]
Passing to the inverse limit gives an injection
\[
\varprojlim_n CH^{d+i}(X,j)/l^n
\hookrightarrow H^0(F,\Z_l(d+i))
\simeq H^2(F,\Q_l/\Z_l(1-d-i))^\vee.
\]
The group on the right is zero by \autoref{lem:coh-loc}. Suppose
first that $\ch(F)=0$. Then
\[
\varprojlim_{m\ge1} CH^{d+i}(X,j)/m=0,
\]
so \autoref{lem:RS}, applied with $\Sigma$ equal to the set of all
primes, gives a non-canonical decomposition
\[
CH^{d+i}(X,j)\simeq G\oplus D,
\]
where $G$ is finite and $D$ is divisible. By
\autoref{prop:l-div}, applied to every prime $l$, the group
$CH^{d+i}(X,j)$ is torsion free. Hence $G=0$, and $D$ is uniquely
divisible.

Suppose next that $\ch(F)=p>0$, and put
$\Sigma=\{l\mid l\neq p\}$. Then
\[
\varprojlim_{m\in\mathbb N_\Sigma} CH^{d+i}(X,j)/m=0.
\]
By \autoref{lem:RS}, there is a non-canonical decomposition
\[
CH^{d+i}(X,j)\simeq G\oplus D,
\]
where $G$ is finite of order prime to $p$ and $D$ is
$p'$-divisible. Moreover, \autoref{prop:uniq-p-div}, applied with
$s=2$, shows that $CH^{d+i}(X,j)$ is uniquely $p$-divisible. Thus
$D$ is uniquely $p$-divisible. Since \autoref{prop:l-div} gives
prime-to-$p$ torsion-freeness, the whole group is torsion free.
Therefore $G=0$, and $D$ is uniquely divisible.
\end{proof}

\begin{cor}\label{cor:A-loc}
Under the assumptions of \autoref{cor:CH-loc}, the structure of
$A^{d+i}(X,j)$ is as follows.
\begin{enumerate}
\item If $2i-j\ge2$, then $A^{d+i}(X,j)$ is uniquely divisible.
\item Suppose that $2i-j=1$. If $\ch(F)=0$, let
$A^{d+i}(X,j)_{\mathrm{div}}$ denote the maximal divisible subgroup
of $A^{d+i}(X,j)$. Then there is a canonical short exact sequence
\[
0\to A^{d+i}(X,j)_{\mathrm{div}}
\to A^{d+i}(X,j)
\to A^{d+i}(X,j)/A^{d+i}(X,j)_{\mathrm{div}}
\to0,
\]
where the quotient is finite. This sequence splits non-canonically.
If $\ch(F)>0$, then $A^{d+i}(X,j)\{p'\}$ is finite and there is a
canonical short exact sequence
\[
0\to A^{d+i}(X,j)\{p'\}
\to A^{d+i}(X,j)
\to D_A\to0,
\]
where
\[
D_A:=A^{d+i}(X,j)/A^{d+i}(X,j)\{p'\}
\]
is uniquely divisible. This sequence splits non-canonically.
\item If $-2d<2i-j\le0$, then $A^{d+i}(X,j)\{p'\}$ is finite and
there is a canonical short exact sequence
\[
0\to A^{d+i}(X,j)\{p'\}
\to A^{d+i}(X,j)
\to D_A\to0,
\]
where
\[
D_A:=A^{d+i}(X,j)/A^{d+i}(X,j)\{p'\}
\]
is uniquely $p'$-divisible. If $\ch(F)>0$, then $D_A$ is uniquely
divisible. This sequence splits non-canonically.
\item If $2i-j\le-2d$, then $A^{d+i}(X,j)$ is uniquely divisible.
\end{enumerate}
\end{cor}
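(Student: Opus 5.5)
We deduce everything from \autoref{cor:CH-loc}, \autoref{lem:CH-loc2}, \autoref{thm:CH-div} and \autoref{prop:uniq-p-div} by restricting to the subgroup $A^{d+i}(X,j)\subset CH^{d+i}(X,j)$, treating the four ranges of $2i-j$ separately.

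\emph{Extreme ranges.} For $2i-j\ge3$ and $2i-j\le-2d$, \autoref{cor:CH-loc}\,(1),(4) give that $CH^{d+i}(X,j)$ is uniquely divisible. Then $A^{d+i}(X,j)$ is torsion free as a subgroup. For divisibility at $l\neq\ch(F)$ we use \autoref{thm:CH-div}\,(2). At $l=p=\ch(F)>0$ we argue as in the proof of \autoref{cor:CH-loc}\,(2): if $a=pb$, then $pf_*(b)=0$, and $CH^i(F,j)$ is $p$-torsion free by \autoref{prop:uniq-p-div}, so $b\in A^{d+i}(X,j)$. The case $2i-j=2$ has already been handled in the proof of \autoref{cor:CH-loc}\,(2), where $A^{d+i}(X,j)$ was shown to be divisible. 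It is torsion free because, by \autoref{thm:CH-div}\,(2) with $s=2$, it is uniquely $l$-divisible for $l\neq\ch(F)$, and it is $p$-torsion free in equal characteristic by \autoref{prop:uniq-p-div}.

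\emph{Range $-2d<2i-j\le0$.} Here $A=A^{d+i}(X,j)$ is a subgroup of $CH=CH^{d+i}(X,j)$, so $A\{p'\}\subset CH\{p'\}$ is finite by \autoref{cor:CH-loc}\,(3). To get the uniform bound on $A/m$ for $m$ prime to $p$, note that $CH^i(F,j)$ is uniquely $l$-divisible for $l\neq\ch(F)$ when $2i-j\le0$: \autoref{lem:CH-loc2}\,(4) handles $2i-j\le0$, except that for $i=j$ one needs $2i-j=i\ge2$, which lies outside this range. Hence $\operatorname{Im}(f_*)$ is a subgroup of a uniquely $l$-divisible group. Since $CH^i(F,j)$ is torsion free at such $l$, the snake lemma for multiplication by $m$ on $0\to A\to CH\to\operatorname{Im}f_*\to0$ gives a surjection $CH[m]\to(\operatorname{Im}f_*)[m]=0$. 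This yields the exact sequence $0\to A/m\to CH/m\to\cdots$, so $|A/m|\le|CH/m|$, and these orders are uniformly bounded as in \autoref{lem:local-structure}. Now apply \autoref{lem:RS} with $\Sigma=\{l\neq p\}$ to write $A\simeq G\oplus D_A$. The argument of \autoref{lem:local-structure} (a finite divisible $l$-primary group vanishes) shows $D_A$ is uniquely $p'$-divisible and $G=A\{p'\}$. In equal characteristic, $D_A$ is uniquely $p$-divisible by the $p$-divisibility argument above together with torsion freeness inherited from $CH$.

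\emph{Range $2i-j=1$.} This is the main step. For $l\neq\ch(F)$ we first use \autoref{thm:CH-div}\,(3) with $s=2$ (note $2i-(j-1)=2=s$), which gives the surjection
\[
H^2\bigl(F,H^{2d-1}(X_{F^{\sep}},\Q_l/\Z_l(d+i))\bigr)\to A^{d+i}(X,j-1)[l^\infty].
\]
This controls the wrong group, so we instead proceed directly. The torsion $A\{l\}\subset CH\{l\}$ is finite by \autoref{cor:CH-loc}\,(3), and it vanishes for almost all $l$ because $CH\{p'\}$ is finite. The bound on $A/l^n$ comes from the same snake-lemma argument, now using that $\operatorname{Im}(f_*)\subset CH^i(F,j)$ has finite torsion by \autoref{lem:CH-loc2}\,(2),(3). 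The cokernel term $(\operatorname{Im}f_*)[m]$ is therefore uniformly bounded, and $|A/m|\le|CH/m|\cdot|CH^i(F,j)_{\mathrm{tor}}|$.

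When $\ch(F)>0$, this bound together with the $p$-argument gives the stated decomposition exactly as in the previous paragraph. When $\ch(F)=0$ we must also control $p$-primary behaviour, and this is the hard point. The plan is to bound $\varprojlim_n A/p^n$ via the injection into $H^{2d+1}_{\et}(X,\Z_p(d+i))$, which is finitely generated over $\Z_p$; to kill its free part we would use the Hochschild--Serre spectral sequence and weight arguments on the Tate module of the Albanese variety. Granting finiteness, \autoref{lem:RS} with $\Sigma$ the set of all primes gives $A\simeq G\oplus D$ with $G$ finite and $D$ divisible. Then $D=A_{\mathrm{div}}$, and the quotient $A/A_{\mathrm{div}}\simeq G$ is finite, with the splitting provided by the injectivity of divisible groups.
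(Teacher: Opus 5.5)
Your arguments for $2i-j\ge3$, for $-2d<2i-j\le0$, and for the equal-characteristic half of $2i-j=1$ are sound. In the last case, your bound $|A^{d+i}(X,j)/m|\le|CH^{d+i}(X,j)/m|\cdot|CH^i(F,j)_{\mathrm{tor}}|$ is a legitimate, more elementary substitute for the paper's injection of $\varprojlim_n A^{d+i}(X,j)/l^n$ into $\Ker(\Tr_f)$.

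There is, however, a genuine gap in part (1) when $2i-j=2$. You assert that \autoref{thm:CH-div}\,(2) with $s=2$ makes $A^{d+i}(X,j)$ uniquely $l$-divisible. It does not: for $2i-j=s$ that theorem only gives $l$-divisibility, and torsion-freeness is exactly the non-formal content here. The paper gets it from \autoref{thm:CH-div}\,(3), i.e.\ the surjection $H^2_{\et}(F,H^{2d-1}_{\et}(X_{F^{\sep}},\Q_l/\Z_l(d+i)))\to A^{d+i}(X,j)[l^\infty]$. This is the very map you wrote down in your $2i-j=1$ paragraph and set aside as controlling ``the wrong group''. The paper combines it with the vanishing of its source, \autoref{lem:E_2-term}\,(1), which is real arithmetic input. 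Let $\Pi_l$ be the pro-$l$ abelianized fundamental group of $X_{F^{\sep}}$. By Poincar\'e and local Tate duality the source is $(\Pi_l\otimes\Q_l/\Z_l(i-1))_{G_F}$. One reduces, via the Albanese, semistable reduction and rigid uniformization, to an abelian variety $B$ with good reduction. For $l\neq p$ one concludes because Frobenius eigenvalues on $V_l(B_k)(i-1)$ have absolute value $q^{i-1/2}\neq1$. For $l=p$ with $\ch(F)=0$ one concludes because $V_p(B^\vee)(-i)$ has Hodge--Tate weights $i,i-1\neq0$. Nothing in your proposal substitutes for this.

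The second gap is in part (2) for $\ch(F)=0$. You simply grant finiteness of $\widehat A_p:=\varprojlim_n A^{d+i}(X,j)/p^n$, yet this is the whole difficulty at $p$, since the uniform bounds of \autoref{lem:local-structure} are prime-to-$p$ only. Three steps are missing.
\begin{enumerate}
\item $CH^i(F,j)[p^\infty]$ has bounded exponent, being a quotient of the finite group $H^0_{\et}(F,\Q_p/\Z_p(i))$. This gives $\widehat A_p\hookrightarrow\varprojlim_n CH^{d+i}(X,j)/p^n$.
\item Compatibility of the cycle class with the trace then places $\widehat A_p$ inside $\Ker(\Tr_f\colon H^{2d+1}_{\et}(X,\Z_p(d+i))\to H^1_{\et}(F,\Z_p(i)))$. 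You cannot ``kill the free part'' of $H^{2d+1}_{\et}(X,\Z_p(d+i))$ itself, since $\Tr_f$ maps it onto $H^1_{\et}(F,\Z_p(i))$, which has rank $[F:\Q_p]$.
\item Hochschild--Serre exhibits $\Ker(\Tr_f)$ as a quotient of $H^2_{\et}(F,H^{2d-1}_{\et}(X_{F^{\sep}},\Z_p(d+i)))\simeq(\Pi_p\otimes\Z_p(i-1))_{G_F}$. Its finiteness is again the Hodge--Tate weight argument, \autoref{lem:E_2-term}\,(2).
\end{enumerate}

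A smaller slip: for $2i-j=-2d$, \autoref{thm:CH-div}\,(2) gives only $l$-torsion-freeness, not $l$-divisibility. You should run your kernel argument for every $n$, using that $CH^i(F,j)$ is uniquely divisible by \autoref{lem:CH-loc2}\,(4), as the paper does.
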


\begin{proof}
\smallskip
\noindent
(1) If $2i-j\ge3$, then \autoref{thm:CH-div}\,(2) shows that
$A^{d+i}(X,j)$ is uniquely $l$-divisible for every prime
$l\neq\ch(F)$. If $\ch(F)=p>0$, the same kernel argument as in the
proof of \autoref{cor:CH-loc}, together with
\autoref{prop:uniq-p-div} and \autoref{lem:CH-loc2}, shows that it is
uniquely $p$-divisible. Thus $A^{d+i}(X,j)$ is uniquely divisible.
If $2i-j=2$, the proof of \autoref{cor:CH-loc}\,(2) shows that
$A^{d+i}(X,j)$ is divisible.
It remains to prove that $A^{d+i}(X,j)$ is torsion
free. For every prime $l\neq\ch(F)$, \autoref{thm:CH-div}\,(3) gives
a surjection
\[
H^2_{\et}\left(
F,H^{2d-1}_{\et}\left(X_{F^{\sep}},\Q_l/\Z_l(d+i)\right)
\right)
\to A^{d+i}(X,j)[l^\infty].
\]
The source vanishes by \autoref{lem:E_2-term} below. If $\ch(F)=p>0$,
then $A^{d+i}(X,j)$ is also $p$-torsion free because it is a subgroup
of the uniquely $p$-divisible group $CH^{d+i}(X,j)$. Hence
$A^{d+i}(X,j)$ is uniquely divisible.

\smallskip
\noindent
(2) Assume that $2i-j=1$. Fix a prime $l\neq\ch(F)$, and put
\[
\widehat{A}_l:=\varprojlim_n A^{d+i}(X,j)/l^n.
\]
We claim that $\widehat{A}_l$ is finite and zero for all but finitely
many primes $l$. Applying \eqref{eq:coeff} with $r=i$, $m=l^n$, and
$j$ replaced by $j+1$, and passing to the direct limit over $n$, we
obtain a surjection
\[
H^0_{\et}(F,\Q_l/\Z_l(i))
\to CH^i(F,j)[l^\infty].
\]
By \autoref{lem:coh-loc}, the group on the left is finite. Hence
$CH^i(F,j)[l^\infty]$ has bounded exponent.

We use the following elementary observation. For an exact sequence
\[
0\to A\to C\to B,
\]
if $B[l^\infty]$ has bounded exponent, then the natural map
\[
\varprojlim_n A/l^n\to\varprojlim_n C/l^n
\]
is injective. Indeed, suppose that $l^NB[l^\infty]=0$. If
$a\in A$ maps to zero in $C/l^n$, write $a=l^nc$ with $c\in C$.
The image of $c$ in $B$ belongs to $B[l^n]$, so $l^Nc\in A$ and
$a=l^{n-N}(l^Nc)$ for $n\ge N$.

Applying this observation to
\[
0\to A^{d+i}(X,j)
\to CH^{d+i}(X,j)
\xrightarrow{f_*}\operatorname{Im}(f_*)
\to0,
\]
we obtain an injection
\[
\widehat{A}_l
\hookrightarrow\varprojlim_n CH^{d+i}(X,j)/l^n.
\]
Taking the inverse limit of the finite-coefficient cycle class maps
gives a commutative diagram
\[
\xymatrix{
\widehat{A}_l\ar@{^{(}->}[r]\ar[d]
&\displaystyle\varprojlim_n CH^{d+i}(X,j)/l^n
\ar@{^{(}->}[r]\ar[d]
&H^{2d+1}_{\et}(X,\Z_l(d+i))\ar[d]^{\Tr_f}\\
0\ar[r]
&\displaystyle\varprojlim_n CH^i(F,j)/l^n
\ar@{^{(}->}[r]
&H^1_{\et}(F,\Z_l(i)).
}
\]
Consequently, $\widehat{A}_l\hookrightarrow\Ker(\Tr_f)$. The
Hochschild--Serre spectral sequence in total degree $2d+1$ gives
\[
\Ker(\Tr_f)\simeq E_\infty^{2,2d-1}\simeq E_3^{2,2d-1},
\]
and there is a natural surjection
\[
H^2_{\et}\left(
F,H^{2d-1}_{\et}\left(X_{F^{\sep}},\Z_l(d+i)\right)
\right)
\to E_3^{2,2d-1}.
\]
By \autoref{lem:E_2-term} below, $\widehat{A}_l$ is finite and zero for all
but finitely many primes $l$.

Suppose first that $\ch(F)=0$. Since the preceding argument applies
to every prime $l$, we have
\[
\varprojlim_{m\ge1} A^{d+i}(X,j)/m
\simeq
\prod_l \widehat{A}_l,
\]
and this group is finite. Applying \autoref{lem:RS} with $\Sigma$
equal to the set of all primes, we obtain a non-canonical
decomposition
\[
A^{d+i}(X,j)\simeq G_A\oplus D_A,
\]
where $G_A$ is finite and $D_A$ is divisible. Therefore
$A^{d+i}(X,j)/A^{d+i}(X,j)_{\mathrm{div}}$ is finite, and the
canonical short exact sequence in the statement splits
non-canonically.

Suppose next that $\ch(F)=p>0$, and put
$\Sigma=\{l\mid l\neq p\}$. The preceding argument gives
\[
\varprojlim_{m\in\mathbb N_\Sigma} A^{d+i}(X,j)/m
\simeq
\prod_{l\neq p}\widehat{A}_l,
\]
and this group is finite. Applying \autoref{lem:RS} with this set
$\Sigma$, we obtain a non-canonical decomposition
\[
A^{d+i}(X,j)\simeq G_A\oplus D_A,
\]
where $G_A$ is finite of order prime to $p$ and $D_A$ is
$p'$-divisible. Since
\[
A^{d+i}(X,j)\{p'\}\subset CH^{d+i}(X,j)\{p'\},
\]
and the group on the right is finite by
\autoref{lem:local-structure}, the prime-to-$p$ torsion subgroup of
$D_A$ is zero. Hence $D_A$ is uniquely $p'$-divisible. Moreover, applying \autoref{prop:uniq-p-div} with $s=2$ to
both $X$ and $\Spec(F)$, we see that $CH^{d+i}(X,j)$ and
$CH^i(F,j)$ are uniquely $p$-divisible. The usual kernel argument
shows that $A^{d+i}(X,j)$, and hence $D_A$, is uniquely
$p$-divisible. Thus
$D_A$ is uniquely divisible and
$G_A=A^{d+i}(X,j)\{p'\}$. This gives the canonical short exact
sequence in the statement.

\smallskip
\noindent
(3) Assume that $-2d<2i-j\le0$. By \autoref{lem:CH-loc2}, the group
$CH^i(F,j)$ is uniquely divisible. As in the case $2i-j=2$, the map
$f_*$ is surjective, and there is a short exact sequence
\[
0\to A^{d+i}(X,j)
\to CH^{d+i}(X,j)
\xrightarrow{f_*}CH^i(F,j)
\to0.
\]
For every integer $m\ge1$, the natural maps
\[
A^{d+i}(X,j)/m\xrightarrow{\simeq}CH^{d+i}(X,j)/m,
\qquad
A^{d+i}(X,j)[m]\xrightarrow{\simeq}CH^{d+i}(X,j)[m]
\]
are isomorphisms. Hence $A^{d+i}(X,j)\{p'\}$ is finite. Moreover,
the proof of \autoref{lem:local-structure} shows that the orders of
$CH^{d+i}(X,j)/m$, and therefore those of $A^{d+i}(X,j)/m$, are
uniformly bounded as $m$ ranges over the integers prime to $p$.
Thus, for $\Sigma=\{l\mid l\neq p\}$,
\[
\varprojlim_{m\in\mathbb N_\Sigma} A^{d+i}(X,j)/m
\]
is finite. Applying \autoref{lem:RS} with this set $\Sigma$, we obtain
a non-canonical decomposition
\[
A^{d+i}(X,j)\simeq G_A\oplus D_A,
\]
where $G_A$ is finite of order prime to $p$ and $D_A$ is
$p'$-divisible. The prime-to-$p$ torsion subgroup of $D_A$ is zero,
so $D_A$ is uniquely $p'$-divisible and
$G_A=A^{d+i}(X,j)\{p'\}$. If $\ch(F)=p>0$, then
\autoref{prop:uniq-p-div}, applied with $s=2$ to both $X$ and
$\Spec(F)$, shows that $CH^{d+i}(X,j)$ and $CH^i(F,j)$ are
uniquely $p$-divisible. The usual kernel argument then shows that
$A^{d+i}(X,j)$ is uniquely $p$-divisible, so $D_A$ is uniquely
divisible. This gives the canonical short exact sequence in the
statement.

\smallskip
\noindent
(4) If $2i-j<-2d$, the assertion follows from
\autoref{prop:l-div} and, in equal characteristic,
\autoref{prop:uniq-p-div}. Assume that $2i-j=-2d$. By
\autoref{cor:CH-loc}\,(4), $CH^{d+i}(X,j)$ is uniquely divisible.
Let $a\in A^{d+i}(X,j)$ and $n\ge1$. There is a unique
$b\in CH^{d+i}(X,j)$ such that $a=nb$. Since
$nf_*(b)=f_*(a)=0$ and $CH^i(F,j)$ is uniquely divisible, we have
$f_*(b)=0$. Hence $b\in A^{d+i}(X,j)$. Thus multiplication by $n$
on $A^{d+i}(X,j)$ is bijective, and the group is uniquely divisible.
\end{proof}

\begin{lem}\label{lem:E_2-term}
    Under the assumptions of \autoref{cor:CH-loc},
    the following statements hold.
    \begin{enumerate}
        \item 
    $
    H_\et^{2}(F,H_\et^{2d-1}(X_{F^\sep},\Q_l/\Z_l(d+i)))=0
    $
    for every prime $l\neq \ch(F)$.
    \item
    $
    H_\et^{2}(F,H_\et^{2d-1}(X_{F^\sep},\Z_l(d+i)))
    $
    is finite for every prime $l\neq\ch(F)$ and it is zero for all but finitely many primes $l$. 
    \end{enumerate}
\end{lem}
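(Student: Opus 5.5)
The plan is to reduce both statements to local Tate duality for the finite Galois modules $H^{2d-1}_{\et}(X_{F^\sep},\Z/l^n(d+i))$, and then to control $H^0$ of the dual twist using weights. Put $T=H^{2d-1}_{\et}(X_{F^\sep},\Z_l(d+i))$ and $M=H^{2d-1}_{\et}(X_{F^\sep},\Q_l/\Z_l(d+i))$.

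For (1), local Tate duality (\cite[Theorem~7.2.6]{NSW08}) applied to each finite layer and passed to the limit gives
\[
H^2_{\et}(F,M)^\vee\simeq H^0\bigl(F,\varprojlim_n \Hom(M[l^n],\mu_{l^n})\bigr).
\]
By geometric Poincaré duality, $\Hom(M[l^n],\mu_{l^n})$ is, up to finite error terms coming from torsion in $H^{2d-1}$ and $H^{2}$, the group $H^1_{\et}(X_{F^\sep},\Z/l^n(1-i))$. Passing to the limit, the relevant object is a $\Z_l$-module $T^*$ that is finitely generated, with $T^*\otimes\Q_l\simeq H^1_{\et}(X_{F^\sep},\Q_l(1-i))$. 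The Tate-module part of $\varprojlim M[l^n]$ is torsion free, so its dual sits inside $H^1(X_{F^\sep},\Z_l(1-i))$ modulo torsion. It therefore suffices to show that $H^0(F,V)=0$ for
\[
V=H^1_{\et}(X_{F^\sep},\Q_l(1-i)),
\]
since a torsion-free $\Z_l$-lattice in $V$ with no invariants in $V$ itself has no invariants.

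To prove $V^{G_F}=0$, I would use the inertia-invariant part together with Frobenius weights. $H^1$ is (up to Tate twist) $V_l(\mathrm{Alb})^\vee$, equivalently $V_l(\mathrm{Pic}^0)(-1)$. Inertia acts quasi-unipotently, and $V^{I}$ carries Frobenius weights in $\{0,1\}$, shifted by the twist to weights in $\{2i-2,\,2i-1\}$. Because $i\ge2$, these weights are nonzero, so Frobenius has no eigenvalue $1$ on $V^{I}$ and $V^{G_F}=0$. The inputs here are Grothendieck's monodromy theorem and Deligne's Weil II applied to the weight filtration on inertia invariants; alternatively one can use Néron models of the Picard variety, whose reduction has an abelian part of weight $1$ and a toric part of weight $2$, the latter giving weight $2i\neq0$ after the twist. \textbf{This weight step is the main obstacle.} I would handle it via the Albanese or Picard variety and Raynaud's semistable reduction after a finite extension $F'/F$. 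The passage to $F'$ is harmless, because invariants only grow under restriction and we only need their vanishing.

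For (2), we have $H^2(F,T)\simeq H^0(F,\Hom(T,\Q_l/\Z_l(1)))^\vee$ by continuous Tate duality, and $\Hom(T,\Q_l/\Z_l(1))$ is a cofinitely generated module whose divisible part is $V^*\otimes\Q_l/\Z_l$ with $V^*\simeq V$ as above. Since $V^{G_F}=0$, the $G_F$-invariants of the divisible part are finite: the kernel of $\varphi-1$ on $(\Q_l/\Z_l)^r$, where $\varphi-1$ is an isogeny. Hence $H^2(F,T)$ is finite. For almost all $l$, $T$ is torsion free (by Gabber's torsion-freeness theorem for almost all $l$), and the invariants vanish as soon as $l$ does not divide $\det(\varphi-1\mid V^{I})$ together with the inertia contribution. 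This determinant is a nonzero rational integer independent of $l$, thanks to the $l$-independence of the characteristic polynomials for abelian varieties via Néron models. That gives vanishing for all but finitely many $l$.
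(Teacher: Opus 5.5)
There is a genuine gap: the proposal does not handle $l=p$ when $\ch(F)=0$.

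Your duality reduction is sound. Because $H^{2d}_{\et}(X_{F^{\sep}},\Z_l(d))\simeq\Z_l$ is torsion free, $M$ is divisible. Local Tate duality together with Poincar\'e duality then reduces (1), and the finiteness in (2), to the vanishing of $V^{G_F}$ for $V=H^1_{\et}(X_{F^{\sep}},\Q_l(1-i))$; once $V^{G_F}=0$, the invariants of $V/L$ have trivial Tate module and so are finite. For $l\neq p$, your weight argument on $V^{I}$ is a legitimate alternative to the paper's route through $\pi_1^{\mathrm{ab}}$, rigid uniformization and the Weil conjectures for the good-reduction quotient.

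The gap is the prime $l=p$ when $\ch(F)=0$. The lemma asserts (1) and (2) for every $l\neq\ch(F)$, which in mixed characteristic includes the residue characteristic. This case is actually used in \autoref{cor:A-loc}: for $p$-torsion-freeness of $A^{d+i}(X,j)$ when $2i-j=2$, and for finiteness of $\prod_l\widehat{A}_l$ when $2i-j=1$. Every tool you invoke is available only for $l\neq p$: Grothendieck's quasi-unipotence of inertia, Frobenius weights on $V^{I}$ via Weil II or N\'eron models, and $l$-independence of characteristic polynomials. For $l=p$, inertia acts on $V$ through an infinite $p$-adic Lie group, not quasi-unipotently; this already happens on the toric part via the cyclotomic character. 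So your argument says nothing about $V^{G_F}$ in this case.

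The missing input is $p$-adic Hodge theory. The paper reduces to the good-reduction abelian part $B$ and uses Faltings' Hodge--Tate decomposition. Then $V_p(B^\vee)(-i)$ has Hodge--Tate weights $i$ and $i-1$, both nonzero for $i\ge2$, so it has no $G_F$-invariants; this gives (1) and the finiteness in (2) at $l=p$. In your framework the repair is short, but you must add it. By Faltings, $H^1_{\et}(X_{F^{\sep}},\Q_p)$ is Hodge--Tate with weights $0$ and $1$, using the convention that $\Q_p(1)$ has weight $-1$. Hence $V=H^1_{\et}(X_{F^{\sep}},\Q_p(1-i))$ has weights $i-1$ and $i$. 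Tate's theorem $\C_p(n)^{G_F}=0$ for $n\neq0$ then gives $V^{G_F}\subset(V\otimes_{\Q_p}\C_p)^{G_F}=0$.

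A secondary point: your ``almost all $l$'' step, ``together with the inertia contribution'', is too vague as written. Mod-$l$ invariants are not controlled by $\det(\varphi-1\mid V^{I})$ alone when inertia acts nontrivially. You should pass once, independently of $l$, to a finite extension over which $A$ has split semistable reduction. Then check the three $G_F$-stable graded pieces $\Z_l(1)^r$, $T_l(B)$ and $\Z_l^r$ separately, as the paper does.
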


\begin{proof}
    Fix a prime $l\neq\ch(F)$. 
    Put
    $\overline X=X_{F^{\sep}} = X\otimes_F F^{\sep}$ 
    and let $\Pi_l := \pi_1^{\mathrm{ab}}(\overline X)^{(l)}$ 
    denote the maximal pro-$l$ quotient of the abelian fundamental group $\pi_1^{\mathrm{ab}}(\overline X)$.

    By geometric Poincar\'e duality and local Tate duality, we have
    \begin{align*}
        H_\et^{2}(F,H_\et^{2d-1}(X_{F^\sep}, \Q_l/\Z_l(d+i)))
        &\simeq 
        H_\et ^2(F,\Pi_l\otimes \Q_l/\Z_l(i))
         \\
         &\simeq 
          H^0(F, (\Pi_l)^\vee\otimes \Z_l(1-i))^\vee\\
          &\simeq 
          (\Pi_l \otimes \Q_l/\Z_l(i-1))_{G_F}.
    \end{align*}
    Similarly, we have
    $$
    H_\et^{2}(F,H_\et^{2d-1}(X_{F^\sep}, \Z_l(d+i)))
    \simeq 
    (\Pi_l\otimes \Z_l(i-1))_{G_F}.
    $$
    For any finite extension $F'/F$ and a Galois module $M$, the corestriction map
    $
    \mathrm{cor}_{F'/F}: M_{G_{F'}}\to M_{G_F}
    $
    is surjective.
    Therefore, in order to show the statement, we may replace $F$ by any finite extension $F'$, and hence we may assume that $X(F)\neq \emptyset$. Put $A = \mathrm{Alb}_X$, the Albanese variety of $X$. 
    
    By \cite[Lemma 5]{KL81} (cf.~\cite[Lemma 4.1]{Yos03}), we have an exact sequence of $G_{F}$-modules
     \begin{equation}\label{ex:pi1}
         0\to C\to \pi_1^{\mathrm{ab}}(\overline X)\to T^\et(A)\to 0.
    \end{equation}
     Here $C$ is finite and $T^\et(A)$ is the maximal \'etale quotient of the Tate module of $A$.
     Taking maximal pro-$l$ quotients we obtain an exact sequence 
     \begin{equation}\label{ex:pi1l}
        0 \to C_l \to \Pi_l \to T_l(A) \to 0,    
     \end{equation}
    where $C_l$ is the image of $C$ in $\Pi_l$ and $T_l(A) = \varprojlim_n A[l^n]$ is the $l$-adic Tate module.
    Since $C_l$ is finite, $C_l\otimes_{\Z_l}\Q_l/\Z_l = 0$. 
    Since $T_l(A)$ is a finite free $\Z_l$-module, we have 
    \[
    \Pi_l\otimes_{\Z_l}\Q_l/\Z_l(i-1) \xrightarrow{\simeq} T_l(A)\otimes_{\Z_l}\Q_l/\Z_l(i-1).
    \]
    Consequently,
    \begin{equation}\label{eq:pi1-Tate-div}
    \left(\Pi_l\otimes_{\Z_l}\Q_l/\Z_l(i-1)\right)_{G_F}
    \simeq
    \left(T_l(A)\otimes_{\Z_l}\Q_l/\Z_l(i-1)\right)_{G_F}.
    \end{equation}
    On the other hand, \eqref{ex:pi1l} gives an exact sequence 
    \begin{equation}\label{eq:pi1-Tate-int}
	\left(C_l\otimes_{\Z_l}\Z_l(i-1)\right)_{G_F}
	\to \left(\Pi_l\otimes_{\Z_l}\Z_l(i-1)\right)_{G_F}
	\to \left(T_l(A)\otimes_{\Z_l}\Z_l(i-1)\right)_{G_F}
	\to 0.
    \end{equation}
    The first term is finite and is zero for all but finitely many primes $l$. 
    It is therefore enough to prove the following:
    \begin{enumerate}[label=(\alph*)]
    \item
    $\left(T_l(A)\otimes_{\Z_l}\Q_l/\Z_l(i-1)
    \right)_{G_F}=0$,
    \item 
    $\left(T_l(A)\otimes_{\Z_l}\Z_l(i-1)\right)_{G_F}$ 
    is finite and is zero for all but finitely many primes $l$.
    \end{enumerate}

     By the semistable reduction theorem, there is a finite separable extension $F'/F$ such that $\mathrm{Alb}_{X_{F'}}$ has semistable reduction.
     By replacing $F$ by $F'$, we may assume that $A$ has semistable reduction.
     Replacing $F$ again by a finite separable extension if necessary, we may further assume that $A$ has \emph{split} semistable reduction.
     After a further finite extension, we may also assume that
    the torus and the lattice occurring in the rigid uniformization below
    are split; more precisely, the torus is split and the $G_F$-action on
    the lattice is trivial. All these finite extensions are chosen independently of $l$.
     By the non-archimedean rigid uniformization theorem \cite[Section~1]{BX96}, replacing $F$ by a further finite extension if necessary, we obtain the following data:
     \begin{enumerate}
         \item[(i)]
         a semi-abelian variety $S$ over $F$ fitting into the following exact sequence
         $$
         0\to\Gm^r \to S\to B\to 0 
         $$
         where $B$ is an abelian variety with good reduction.

         \item[(ii)]
         a closed immersion of rigid analytic groups $N^\mathrm{an}\to S^\mathrm{an}$ where $N$ is a group scheme which is isomorphic to $\Z^r$.
         
         \item[(iii)]
         a faithfully flat morphism of rigid analytic groups $S^\mathrm{an}\to A^\mathrm{an}$ which induces an isomorphism $S^\mathrm{an}/N^\mathrm{an}\simeq A^\mathrm{an}$. 
     \end{enumerate}
     This gives exact sequences
     $$
     0\to T_l(\Gm^r)\to T_l(S)\to T_l(B)\to 0,\quad 0\to T_l(S)\to T_l(A)\to N\otimes_\Z \Z_l \to 0.
     $$
     Since $T_l(\Gm^r)=\Z_l(1)^r$ and $\varprojlim_n N/l^n= N\otimes_\Z \Z_l  \simeq  \Z_l^r$, for any $l\neq \ch(F)$, from \autoref{lem:coh-loc} we obtain that
     \begin{align*}
         &(T_l(\Gm^r)\otimes \Q_l/\Z_l(i-1))_{G_F}\simeq H^2(F,\Q_l/\Z_l(i+1))^r =0,\\
         &(( N\otimes_\Z \Z_l )\otimes \Q_l/\Z_l(i-1))_{G_F}\simeq H^2(F,\Q_l/\Z_l(i))^r=0,
     \end{align*}
     and that
     \begin{align*}
         &(T_l(\Gm^r)\otimes \Z_l(i-1))_{G_F}\simeq (H^0(F,\Q_l/\Z_l(-i))^\vee)^r,\\
         &\left( (N\otimes_\Z \Z_l)\otimes \Z_l(i-1)\right)_{G_F}\simeq (H^0(F, \Q_l/\Z_l(1-i))^\vee)^r 
     \end{align*}
     are finite and trivial for all but finitely many primes $l$.
     Hence, it is enough to show that 
     \begin{enumerate}[label=(\alph*')]
         \item 
         $(T_l(B)\otimes \Q_l/\Z_l(i-1))_{G_F}=0$
         \item
         $(T_l(B)\otimes \Z_l(i-1))_{G_F}$ is finite and is zero for all but finitely many primes $l$.
     \end{enumerate}

    We first treat the case $l\neq p$. 
    Since $B$ has good reduction, the action of $G_F$ on $T_l(B)$ is unramified and factors through $G_k$.
    Let $B_k$ denote the reduction of the N\'eron model of $B$. 
    By smooth proper base change,  
    $T_l(B) \simeq T_l(B_k)$ as $G_k$-modules. Therefore, 
    we have
    $$
     (T_l(B)\otimes \Lambda(i-1))_{G_F}\simeq (T_l(B_k)\otimes \Lambda(i-1))_{G_k},
    $$
    where $\Lambda=\Z_l$ or $\Q_l/\Z_l$. 
    Let $q = \#k$ and let $\operatorname{Frob}_q$ denote the arithmetic Frobenius. 
    Thus $\operatorname{Frob}_q$ acts on $\Z_l(1)$ as multiplication by $q$. 
    Put $V_l(B_k) = T_l(B_k)\otimes_{\Z_l}\Q_l$. 
    By the Weil conjectures, every complex conjugate of every eigenvalue of $\operatorname{Frob}_q$ on $V_l(B_k)$
    has absolute value $q^{1/2}$. 
    Hence every eigenvalue of $\operatorname{Frob}_q$ on $V_l(B_k)(i-1)$ has absolute value $q^{i-1/2}$. 
    Since $i\ge2$, the number $1$ is not an eigenvalue. Therefore,
    \[
    \operatorname{Frob}_q-1\colon V_l(B_k)(i-1)\to V_l(B_k)(i-1)
    \]
    is an isomorphism. 
    We have the following commutative diagram
    $$
    \xymatrix{
    0\ar[r] 
    &T_l(B_k)\otimes \Z_l(i-1)\ar[r]\ar[d]^-{\operatorname{Frob}_q-1} 
    & V_l(B_k)(i-1)\ar[r]\ar[d]_{\simeq}^-{\operatorname{Frob}_q-1} 
    & T_l(B_k)\otimes \Q_l/\Z_l(i-1) \ar[r]\ar[d]^-{\operatorname{Frob}_q-1}
    &0\\
    0\ar[r]
    & T_l(B_k)\otimes \Z_l(i-1)\ar[r]
    & V_l(B_k)(i-1)\ar[r] 
    & T_l(B_k)\otimes \Q_l/\Z_l(i-1) \ar[r]&0
    }
    $$
    which proves assertion (a').
    Since the left vertical map is injective and $T_l(B_k)\otimes \Z_l(i-1)$ is a free $\Z_l$-module of finite rank, the coinvariant $(T_l(B_k)\otimes \Z_l(i-1))_{G_k}$ is finite. 
    To prove the assertion for almost all $l$, 
    we denote by  
    \[
    P(t) = \det(1-t\operatorname{Frob}_q | V_l(B_k))
    \]
    the characteristic polynomial of $\operatorname{Frob}_q$ on $V_l(B_k)$.
    By the Weil conjectures, $P(t)\in \Z[t]$ and is independent of $l$. 
    Since $\operatorname{Frob}_q$ acts on $\Q_l(i-1)$ as
    multiplication by $q^{i-1}$, its action on $V_l(B_k)(i-1)$ is
    $q^{i-1}\operatorname{Frob}_q$. Hence
    \[
    \det\left(\operatorname{Frob}_q-1\mathrel{\big|}V_l(B_k)(i-1)\right)=P(q^{i-1}),
    \]
    because $\dim_{\Q_l}V_l(B_k)=2\dim B$ is even.
    Consequently,     
    if $l\nmid P(q^{i-1})$, then $\operatorname{Frob}_q-1$ is bijective on $T_l(B_k)\otimes \Z_l(i-1)$, which proves assertion (b').
    
    Lastly, we consider the case $\ch(F)=0$ and $l=p$. 
    In this case $T_p(B)$ is the usual $p$-adic Tate module, since the
    ground field has characteristic zero.
    Put $V_p(B)=T_p(B)\otimes_{\Z_p}\Q_p$.
    The Weil pairing gives a $G_F$-equivariant isomorphism
    \[
        \Hom_{\Z_p}\left(T_p(B),\Z_p\right) \simeq T_p(B^\vee)(-1).
    \]
    Therefore,
    \[
    \left(\left(T_p(B)\otimes_{\Z_p}\Q_p/\Z_p(i-1)\right)_{G_F}\right)^\vee 
    \simeq \left(T_p(B^\vee)\otimes_{\Z_p}\Z_p(-i) \right)^{G_F} 
    \subset V_p(B^\vee)(-i)^{G_F}.
    \]
    We use the convention that $\Q_p(1)$ has Hodge--Tate weight $-1$.
    Faltings' Hodge--Tate decomposition (\cite[Part~III, Theorem~4.1]{Fal88}) gives
    \[
    H^1_{\et}\left(B_{\overline F},\Q_p\right)\otimes_{\Q_p}\C_p
\simeq
H^1\left(B,\mathcal O_B\right)\otimes_F\C_p
\oplus
H^0\left(B,\Omega^1_{B/F}\right)\otimes_F\C_p(-1).
    \]
    Thus
    $H^1_{\et}\left(B_{\overline F},\Q_p\right)$ 
    has Hodge--Tate weights $0$ and $1$, each with multiplicity
    $\dim(B)$. Since
    $H^1_{\et}\left(B_{\overline F},\Q_p\right)
    \simeq
    V_p(B)^\vee$,
    the covariant Tate module $V_p(B)$ has Hodge--Tate weights $0$ and $-1$.
    The same applies to $B^\vee$. Consequently,
    $V_p(B^\vee)(-i)$ has Hodge--Tate weights $i$ and $i-1$.
    Since $i\ge2$, neither of these weights is zero. Hence
    \[
    V_p(B^\vee)(-i)^{G_F}=0.
    \]
    Indeed, a nonzero invariant vector would define a copy of the
    trivial representation $\Q_p$, whose Hodge--Tate weight is zero.
    Consequently
    \[
    \left(T_p(B)\otimes_{\Z_p}\Q_p/\Z_p(i-1)\right)_{G_F}=0.
    \]
    This proves (a').

    Finally, if $(T_p(B)\otimes \Z_p(i-1))_{G_F}$ is infinite, then 
    it has positive $\Z_p$-rank, and hence 
    there is a nonzero $\Z_p$-linear homomorphism $(T_p(B)\otimes \Z_p(i-1))_{G_F} \to \Z_p$. 
    Composing with the quotient map gives a non-trivial  $G_F$-morphism 
    $T_p(B)\otimes \Z_p(i-1)\to \Z_p$. 
    After tensoring with $\Q_p$, 
    this gives a non-trivial $G_F$-morphism
    $$
    V_p(B)(i-1)\to \Q_p.
    $$
    By the Weil pairing 
    \[
    \Hom_{\Q_p}(V_p(B)(i-1),\Q_p) \simeq V_p(B^\vee)(-i). 
    \]
    Thus the preceding homomorphism gives a nonzero element of $V_p(B^{\vee})(-i)^{G_F}$, 
    contradicting the Hodge--Tate weight argument above.
    Therefore $(T_p(B)\otimes \Z_p(i-1))_{G_F}$ is finite, which completes the proof.
\end{proof}

\subsection*{Global fields}

We first determine the structure of the higher Chow groups of a
global field.

\begin{lem}\label{lem:global}
Let $F$ be a global field such that either $\ch(F)>0$ or $F$ is a
totally imaginary number field. Assume that $i\ge2$. Then the
following hold.
\begin{enumerate}
\item If $2i-j=2$, then $CH^i(F,j)$ is an infinite torsion group.
\item If $2i-j=1$, then $CH^i(F,j)_{\mathrm{tor}}$ is finite and
there is a canonical short exact sequence
\[
0\to CH^i(F,j)_{\mathrm{tor}}
\to CH^i(F,j)
\to CH^i(F,j)/CH^i(F,j)_{\mathrm{tor}}\to0,
\]
where $CH^i(F,j)/CH^i(F,j)_{\mathrm{tor}}$ is a free abelian group
of rank
\[
r=
\begin{cases}
r_2, & \mbox{if $F$ is a totally imaginary number field},\\
0, & \mbox{if $\ch(F)>0$}.
\end{cases}
\]
This sequence splits non-canonically. Here $r_2$ denotes the number
of complex places of $F$.
\item In all other cases, $CH^i(F,j)=0$.
\end{enumerate}
\end{lem}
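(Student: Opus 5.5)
We will follow the pattern of \autoref{lem:fin} and \autoref{lem:CH-loc2}. The hypothesis that $F$ is a function field or a totally imaginary number field gives $\cd_l(F)=2$ for every prime $l\neq\ch(F)$. In characteristic $p$ we also have $[F:F^p]=p$, so \autoref{prop:uniq-p-div} with $s=2$ shows that $CH^i(F,j)$ is uniquely $p$-divisible for $i\ge2$. Thus everything reduces to the primes $l\neq\ch(F)$, plus a rationality statement. For the rational part we use Levine's injection $CH^i(F,j)\otimes\Q\hookrightarrow K_j(F)\otimes\Q$ (\cite[Theorem~3.1]{Lev94}), as in \autoref{lem:fin}. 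We then use Borel's computation and Harder's theorem: $K_j(F)\otimes\Q$ vanishes for $j\ge2$ even; for $j\ge3$ odd it has dimension $r_2$ for a totally imaginary number field and $0$ in positive characteristic. More precisely, the $\gamma$-filtration places this rational class in weight $i=(j+1)/2$, i.e. in $2i-j=1$. So $CH^i(F,j)$ is torsion unless $2i-j=1$, and for $2i-j=1$ its rank is $r$. The case $j=0$ with $i\ge1$ vanishes trivially.

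\textbf{Torsion computation.} For $l\neq\ch(F)$ we use the sequence $0\to CH^i(F,j+1)\otimes\Q_l/\Z_l\to H^{2i-j-1}_{\et}(F,\Q_l/\Z_l(i))\to CH^i(F,j)[l^\infty]\to0$ from \autoref{lem:cycl} and \eqref{eq:coeff}. \autoref{prop:l-div} with $s=2$ kills torsion and gives unique $l$-divisibility for $2i-j\ge4$ and $2i-j<0$; $l$-torsion freeness holds at $2i-j=0$.

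For $2i-j=3$ the group is $l$-divisible and receives a surjection from $H^2(F,\Q_l/\Z_l(i))$. This vanishes for $i\ge2$ by Poitou--Tate: the real places are absent, and the local $H^2$'s with these twists vanish by \autoref{lem:coh-loc}, or by the corresponding vanishing at $l=2$ over imaginary fields.

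Since the group is torsion away from $2i-j=1$, the cases $2i-j\notin\{1,2\}$ then give $CH^i(F,j)=0$. The case $2i-j=0$ needs the inverse-limit argument of \autoref{lem:CH-loc2}(4): we use $\varprojlim CH^i(F,j)/l^n\hookrightarrow H^0(F,\Z_l(i))=0$ together with torsion freeness.

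For $2i-j=1$ the torsion is the image of the finite group $H^0(F,\Q_l/\Z_l(i))$, which vanishes for almost all $l$, so $CH^i(F,j)_{\mathrm{tor}}$ is finite. The quotient has rank $r$. To see that it is free we show it is finitely generated. We bound $CH^i(F,j)/l^n\hookrightarrow H^1(F,\Z/l^n(i))$ and compare with $K_{2i-1}(F)$, which is finitely generated by Quillen's finite generation for rings of integers. Levine's result plus the motivic spectral sequence should identify $CH^i(F,2i-1)$ with a graded piece of $K_{2i-1}(\mathcal O_F)$ or of the function-field analogue. Then $CH^i(F,j)\simeq$ finite $\oplus\Z^r$, and the sequence splits because the quotient is free.

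For $2i-j=2$, $CH^i(F,j)$ is torsion and surjected onto by $H^1(F,\Q_l/\Z_l(i))$. We show it is infinite by exhibiting infinitely many primes $l$ with $CH^i(F,j)[l]\ne0$. The sequence above gives $CH^i(F,j)[l^\infty]\simeq H^1(F,\Q_l/\Z_l(i))/\mathrm{div}$-image, where the image of $CH^i(F,j+1)\otimes\Q_l/\Z_l$ has corank $r$. The $\Z_l$-corank of $H^1(F,\Q_l/\Z_l(i))$ equals $r$ by the global Euler characteristic formula with $H^2=0$. So the quotient is the finite group $H^1(F,\Q_l/\Z_l(i))/\mathrm{div}$, which we control via $H^0$ and the Tate--Shafarevich/wild kernel.

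\textbf{Main obstacle.} The expected main obstacle is proving infiniteness in case (1), i.e. showing this finite quotient is nonzero for infinitely many $l$. We would first try Poitou--Tate localization: $H^1(F,\Q_l/\Z_l(i))$ maps onto $\bigoplus_v H^1(F_v,\Q_l/\Z_l(i))$ modulo a finite dual obstruction. The local terms, by \autoref{lem:coh-loc}, contain $\Z/l^{c_{l,i-1}}$ for each place $v$ with $l\mid q_v^{i-1}-1$. By Chebotarev there are infinitely many such $v$ for each $l$, giving unbounded torsion and hence an infinite group.

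If this fails, we would instead compare with $K^M_2$-type results such as Tate's computation $K_2(F)_{\mathrm{tor}}$ infinite when $i=j=2$, and then use multiplicativity or norm arguments for general $i$.
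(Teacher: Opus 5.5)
\textbf{The main gap is in part (1).} Your central step is false. You claim that $H^1_{\et}(F,\Q_l/\Z_l(i))$ has $\Z_l$-corank $r$ ``by the global Euler characteristic formula with $H^2=0$'', so that $CH^i(F,2i-2)\{l\}$ is the \emph{finite} group $H^1_{\et}(F,\Q_l/\Z_l(i))/\mathrm{div}$. But Tate's Euler characteristic formula concerns $G_S$ for a finite set $S$ of places, not the full Galois group $G_F$. Passing to the limit over $S$ adds, through the Gysin sequence, the groups $\bigoplus_{v}H^0(\kappa(v),\Q_l/\Z_l(i-1))\simeq\bigoplus_{v}\Z/l^{v_l(q_v^{i-1}-1)}$, and by Chebotarev infinitely many of these are nonzero. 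So $CH^i(F,2i-2)\{l\}$ is infinite for \emph{every} $l\neq\ch(F)$, just as $K_2(F)\{l\}$ is. The finite group you describe via $H^0$ and a wild kernel is essentially the $l$-part of $K_{2i-2}(\mathcal O_F)$, not of $K_{2i-2}(F)$.

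Your plan of finding infinitely many $l$ with $CH^i(F,j)[l]\neq0$ therefore starts from a wrong finiteness claim, and your ``main obstacle'' paragraph contradicts it. That fallback is also imprecise. The cokernel of $H^1(G_S,M)\to\bigoplus_{v\in S}H^1(F_v,M)$ is not finite in general: for a number field, the places above $l$ contribute corank at least $r_2$. You also never check that the image of $CH^i(F,2i-1)\otimes\Q_l/\Z_l$ does not absorb the local classes.

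The missing idea is to bring in the residue fields by localization. The paper uses
\[
CH^i(F,2i-2)\to\bigoplus_{x\in U_0}CH^{i-1}(\kappa(x),2i-3)\to CH^i(U,2i-3).
\]
The middle term is $\bigoplus_x\Z/(\#\kappa(x)^{i-1}-1)$ by \autoref{lem:fin}, hence infinite. The right term is finite: it is finitely generated by the known case of the motivic Bass conjecture, and $K_{2i-3}(U)^{(i)}_\Q=0$ by Borel/Harder. The case $i=2$ is covered by Bass--Tate. Your Galois-cohomological route can be repaired in the same spirit. The residue map $H^1_{\et}(F,\Q_l/\Z_l(i))\to\bigoplus_{v\nmid l}H^0(\kappa(v),\Q_l/\Z_l(i-1))$ kills the divisible image of $CH^i(F,2i-1)\otimes\Q_l/\Z_l$, because a direct sum of finite groups has no nonzero divisible subgroup. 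Its cokernel lies in the cofinitely generated group $H^2_{\et}(U[1/l],\Q_l/\Z_l(i))$. Hence $CH^i(F,2i-2)\{l\}$ surjects onto an infinite group.

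\textbf{Two further steps need more than you give.} For $2i-j=3$, Poitou--Tate plus vanishing of the local $H^2$'s does not yield $H^2_{\et}(F,\Q_l/\Z_l(i))=0$. The kernel of the localization map, dual to a Tate--Shafarevich group of $\Z_l(1-i)$, remains. For number fields its vanishing when $i\ge2$ is Soul\'e's theorem, which must be cited. The paper avoids this: by localization and \autoref{lem:fin}, for $j>i$ the group $CH^i(F,j)$ is a quotient of the finite group $CH^i(U,j)$ and is divisible, hence zero.

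For (2), the injection $CH^i(F,j)/l^n\hookrightarrow H^1_{\et}(F,\Z/l^n(i))$ bounds nothing, since the target is infinite, and ``should identify'' must be made precise. It can be made precise as follows. Once (3) is proved, every other term of total degree $2i-1$ in the motivic spectral sequence vanishes. So does every term that could meet $CH^i(F,2i-1)$ by a differential, namely $CH^{i+r-1}(F,2i-2)$ and $CH^{i-r+1}(F,2i)$ for $r\ge2$. Thus $CH^i(F,2i-1)\simeq K_{2i-1}(F)$. This is a quotient of $K_{2i-1}(U)$ because $K_{2i-2}(\kappa(x))=0$, hence finitely generated by Quillen. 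That is a valid alternative to the paper's argument, which instead uses the surjection $CH^i(U,2i-1)\to CH^i(F,2i-1)$ from localization and \autoref{lem:fin}, together with Kahn's finite generation of $CH^i(U,2i-1)$.
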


\begin{proof}
Let $U$ be a regular one-dimensional global model of $F$. If $F$ is
a number field, we take $U=\Spec(\mathcal O_F)$. If $\ch(F)>0$, we
take $U$ to be the smooth projective curve over a finite field with
function field $F$.

\smallskip
\noindent
(1) Assume that $2i-j=2$. If $i=2$, then
$CH^2(F,2)\simeq K_2^M(F)$ is an infinite torsion group by
\cite[Theorem~2.1]{BT73}. We may therefore assume that $i\ge3$.

The localization sequence contains
\[
CH^i(F,2i-2)\to
\bigoplus_{x\in U_0}CH^{i-1}(\kappa(x),2i-3)
\to CH^i(U,2i-3).
\]
The group $CH^i(U,2i-3)$ is finitely generated by the known case of
the motivic Bass conjecture (\cite[Theorem~40]{Kah05}). Moreover,
$CH^i(U,2i-3)\otimes_\Z\Q\simeq K_{2i-3}(U)^{(i)}_\Q=0$ by Borel's
theorem in characteristic zero
(\cite[Theorem~7]{Wei05}) and by Harder's theorem in positive
characteristic. Hence $CH^i(U,2i-3)$ is finite
(\cite[Theorem~48]{Wei05}).

On the other hand, \autoref{lem:fin} gives
\[
CH^{i-1}(\kappa(x),2i-3)
\simeq \Z/(\#\kappa(x)^{\,i-1}-1)
\]
for every $x\in U_0$. Since $U$ has infinitely many closed points,
the direct sum in the middle is infinite. Its kernel under the
second map is therefore infinite. Consequently,
$CH^i(F,2i-2)$ is infinite.

Furthermore,
$CH^i(F,2i-2)\otimes_\Z\Q\simeq K_{2i-2}(F)^{(i)}_\Q=0$ by Borel's
theorem in characteristic zero and by Harder's theorem together with
the localization sequence in positive characteristic. Thus
$CH^i(F,2i-2)$ is an infinite torsion group.

\smallskip
\noindent
(2) Assume that $2i-j=1$, so that $j=2i-1$. The localization
sequence gives
\[
CH^i(U,2i-1)\to CH^i(F,2i-1)\to
\bigoplus_{x\in U_0}CH^{i-1}(\kappa(x),2i-2).
\]
Since $2(i-1)-(2i-2)=0$, the right-hand term vanishes by
\autoref{lem:fin}. Hence $CH^i(U,2i-1)\to CH^i(F,2i-1)$ is
surjective. The known case of the motivic Bass conjecture therefore
shows that $CH^i(F,2i-1)$ is finitely generated.

If $F$ is a totally imaginary number field, Borel's theorem gives
$\operatorname{rank}_\Z CH^i(F,2i-1)=r_2$. If $\ch(F)>0$, Harder's
theorem gives $CH^i(F,2i-1)\otimes_\Z\Q=0$. Consequently,
$CH^i(F,2i-1)_{\mathrm{tor}}$ is finite and
$CH^i(F,2i-1)/CH^i(F,2i-1)_{\mathrm{tor}}$ is a free abelian group
of rank $r$. Thus there is a canonical short exact sequence
\[
0\to CH^i(F,2i-1)_{\mathrm{tor}}
\to CH^i(F,2i-1)
\to CH^i(F,2i-1)/CH^i(F,2i-1)_{\mathrm{tor}}\to0.
\]
Since the quotient is free abelian, it is projective, and the
sequence splits non-canonically.

\smallskip
\noindent
(3) It remains to consider the cases $2i-j\ge3$ and $2i-j\le0$.
Suppose first that $2i-j\ge3$. If $j<i$, then $CH^i(F,j)=0$ for
dimensional reasons. If $j=i$, then $i\ge3$ and
$CH^i(F,i)\simeq K_i^M(F)=0$ by \cite[Theorem~2.1]{BT73}. We may
therefore assume that $j>i$.

The localization sequence gives
\begin{equation}\label{eq:loc-global}
CH^i(U,j)\to CH^i(F,j)\to
\bigoplus_{x\in U_0}CH^{i-1}(\kappa(x),j-1).
\end{equation}
Since $2(i-1)-(j-1)=2i-j-1\ge2$, \autoref{lem:fin} shows that
$CH^{i-1}(\kappa(x),j-1)=0$ for every $x\in U_0$. Hence
$CH^i(U,j)\to CH^i(F,j)$ is surjective.

By the known case of the motivic Bass conjecture
(\cite[Theorem~40]{Kah05}), the group $CH^i(U,j)$ is
finitely generated. Moreover, \cite[Theorem~3.1]{Lev94} gives
$CH^i(U,j)\otimes_\Z\Q\simeq K_j(U)^{(i)}_\Q$.

Suppose that $\ch(F)=0$. If $j$ is even, Borel's theorem gives
$K_j(U)_\Q=0$. If $j$ is odd, write $j=2a-1$. Then
$K_j(U)_\Q=K_j(U)^{(a)}_\Q$. Since $2i-j\ge3$, we have $i\neq a$,
and hence $K_j(U)^{(i)}_\Q=0$. If $\ch(F)>0$, then Harder's theorem
gives $K_j(U)^{(i)}_\Q=0$. Thus $CH^i(U,j)$ is finite.

For every prime $l\neq\ch(F)$, \autoref{prop:l-div}, applied with
$s=2$ and $d=0$, shows that $CH^i(F,j)$ is $l$-divisible. If
$\ch(F)=p>0$, then $[F:F^p]=p$, and
\autoref{prop:uniq-p-div}, applied with $s=2$ and $d=0$, shows that
$CH^i(F,j)$ is uniquely $p$-divisible. Since $CH^i(F,j)$ is a
quotient of the finite group $CH^i(U,j)$, it follows that
$CH^i(F,j)=0$.

Finally, suppose that $2i-j\le0$. The rational comparison with
Quillen $K$-theory, together with Borel's theorem in characteristic
zero and Harder's theorem in positive characteristic, gives
$CH^i(F,j)\otimes_\Z\Q=0$. Thus $CH^i(F,j)$ is a torsion group. Let
$l\neq\ch(F)$ be a prime. Since $2i-j-1<0$,
\autoref{lem:cycl} gives $CH^i(F,j+1;\Z/l)=0$. The coefficient exact
sequence therefore gives $CH^i(F,j)[l]=0$. If $\ch(F)=p>0$, then
\autoref{prop:uniq-p-div} also gives $CH^i(F,j)[p]=0$. Hence
$CH^i(F,j)$ is torsion free, and therefore zero.
\end{proof}

We now apply \autoref{lem:global} and the results of the previous
section to the higher Chow groups of smooth proper schemes over
global fields.

\begin{cor}\label{cor:global}
Let $F$ be a global field such that either $\ch(F)>0$ or $F$ is a
totally imaginary number field. Let $X$ be a smooth proper and
geometrically irreducible scheme over $F$ of dimension $d$.
Assume that $i\ge2$. Then the following hold.
\begin{enumerate}
\item If $2i-j\ge3$, then
$CH^{d+i}(X,j)=A^{d+i}(X,j)$ and this group is uniquely divisible.
\item If $2i-j=2$, there is a canonical short exact sequence
\[
0\to A^{d+i}(X,j)
\to CH^{d+i}(X,j)
\xrightarrow{f_*}CH^i(F,j)\to0,
\]
where $CH^i(F,j)$ is an infinite torsion group and
$A^{d+i}(X,j)$ is divisible. This sequence splits non-canonically.
\item If $2i-j=-2d$, then
$CH^{d+i}(X,j)=A^{d+i}(X,j)$ is torsion free.
\item If $2i-j<-2d$, then
$CH^{d+i}(X,j)=A^{d+i}(X,j)$ is uniquely divisible.
\end{enumerate}
\end{cor}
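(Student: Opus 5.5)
The plan is to run the same case analysis as in the proof of \autoref{cor:CH-loc}. Three inputs carry the argument: \autoref{lem:global} for the base, \autoref{prop:l-div} and \autoref{thm:CH-div} for primes $l\neq\ch(F)$, and \autoref{prop:uniq-p-div} for $l=p$ when $\ch(F)=p>0$. The numerical facts are these. For a global field that is totally imaginary or of positive characteristic, $\cd_l(F)=2$ for every prime $l\neq\ch(F)$; for $l=2$ this is exactly where the hypothesis of no real places is used. In positive characteristic, $[F:F^p]=p$, so \autoref{prop:uniq-p-div} applies with $s=2$ whenever $i\ge2$.

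\textbf{Cases (1), (3) and (4).} Suppose $2i-j\ge3$ or $2i-j\le-2d$, with $d\ge0$. Then $CH^i(F,j)=0$ by \autoref{lem:global}\,(3), so $CH^{d+i}(X,j)=A^{d+i}(X,j)$.
\begin{itemize}
\item If $2i-j\ge3$: for $l\neq\ch(F)$, \autoref{thm:CH-div}\,(2) with $s=2$ gives unique $l$-divisibility of $A^{d+i}(X,j)$, since $2i-j>s$. For $l=p$ in positive characteristic, \autoref{prop:uniq-p-div} gives unique $p$-divisibility. Hence the group is uniquely divisible.
\item If $2i-j=-2d$: \autoref{prop:l-div}\,(3) gives $l$-torsion freeness for $l\neq\ch(F)$, and \autoref{prop:uniq-p-div} gives $p$-torsion freeness. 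Hence the group is torsion free.
\item If $2i-j<-2d$: \autoref{prop:l-div}\,(4) together with \autoref{prop:uniq-p-div} gives unique divisibility.
\end{itemize}

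\textbf{Case (2): divisibility of $A^{d+i}(X,j)$.} Assume $2i-j=2=s$. For each $l\neq\ch(F)$, \autoref{thm:CH-div}\,(2) shows that $A^{d+i}(X,j)$ is $l$-divisible. In positive characteristic, $p$-divisibility follows from the kernel argument of \autoref{cor:CH-loc}\,(2). Write $a=pb$. Then $pf_*(b)=0$, and $CH^i(F,j)$ is $p$-torsion free by \autoref{prop:uniq-p-div}, so $f_*(b)=0$ and $b\in A^{d+i}(X,j)$.

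\textbf{Case (2): surjectivity of $f_*$.} By \autoref{lem:global}\,(1), $CH^i(F,j)$ is torsion, so it is the direct sum of its $l$-primary parts. The $p$-primary part vanishes in positive characteristic by \autoref{prop:uniq-p-div}. For every $l\neq\ch(F)$, \autoref{thm:CH-div}\,(1) with $2i-j\ge s$ shows that $f_*$ maps $CH^{d+i}(X,j)[l^n]$ onto $CH^i(F,j)[l^n]$. Hence $f_*$ hits every element of $CH^i(F,j)$. This is simpler than the local case: no corestriction argument is needed, because the target has no divisible non-torsion part.

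\textbf{Case (2): splitting.} Since $A^{d+i}(X,j)$ is divisible, it is an injective abelian group, so the canonical exact sequence splits non-canonically. The statement also describes $CH^i(F,j)$ as infinite torsion; this is exactly \autoref{lem:global}\,(1).

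\textbf{Main obstacle.} The one point needing care is the identity $\cd_l(F)=2$ for all $l\neq\ch(F)$, including $l=2$, so that \autoref{thm:CH-div} applies with $s=2$. This is where the hypothesis that $F$ has no real places enters. In the function field case one also needs $\cd_p$ to be irrelevant, and it is, because the $p$-part is handled separately by \autoref{prop:uniq-p-div}. Everything else is a direct assembly of earlier results.
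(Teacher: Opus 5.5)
Your proposal is correct and follows the paper's proof essentially step by step. The paper likewise uses \autoref{lem:global} for the base, \autoref{thm:CH-div} and \autoref{prop:l-div} with $s=2$ for $l\neq\ch(F)$, and \autoref{prop:uniq-p-div} together with the kernel argument for $l=p$. It also obtains surjectivity of $f_*$ in case (2) from the torsion-ness of $CH^i(F,j)$ and surjectivity on $l$-primary torsion, with no corestriction step, and splits the sequence because the divisible group $A^{d+i}(X,j)$ is injective.
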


\begin{proof}
By the assumption on $F$, we have $\cd_l(F)=2$ for every prime
$l\neq\ch(F)$. If $\ch(F)=p>0$, then $[F:F^p]=p$.

\smallskip
\noindent
(1)\ By \autoref{lem:global}, we have $CH^i(F,j)=0$, and hence
$A^{d+i}(X,j)=CH^{d+i}(X,j)$. For every prime
$l\neq\ch(F)$, \autoref{thm:CH-div}\,(2) shows that this group is
uniquely $l$-divisible. If $\ch(F)=p>0$,
\autoref{prop:uniq-p-div} shows that it is uniquely $p$-divisible.
Therefore, $CH^{d+i}(X,j)=A^{d+i}(X,j)$ is uniquely divisible.

\smallskip
\noindent
(2)\ By \autoref{lem:global}, the group $CH^i(F,j)$ is an infinite torsion
group. For every prime $l\neq\ch(F)$,
\autoref{thm:CH-div}\,(1) shows that
\[
f_*\colon CH^{d+i}(X,j)[l^n]\to CH^i(F,j)[l^n]
\]
is surjective for every $n\ge1$.

If $\ch(F)=p>0$, then \autoref{prop:uniq-p-div}, applied to
$\Spec(F)$, shows that $CH^i(F,j)$ has no $p$-primary torsion.
Since $CH^i(F,j)$ is torsion, the preceding surjectivity on the
$l$-primary torsion subgroups shows that
\[
f_*\colon CH^{d+i}(X,j)\to CH^i(F,j)
\]
is surjective.

By \autoref{thm:CH-div}\,(2), the group $A^{d+i}(X,j)$ is
$l$-divisible for every prime $l\neq\ch(F)$. If $\ch(F)=p>0$, then
$A^{d+i}(X,j)$ is also $p$-divisible. Indeed, let
$a\in A^{d+i}(X,j)$. Since $CH^{d+i}(X,j)$ is uniquely
$p$-divisible by \autoref{prop:uniq-p-div}, there is a unique
$b\in CH^{d+i}(X,j)$ such that $a=pb$. Applying $f_*$ gives
$pf_*(b)=0$. Since $CH^i(F,j)$ is $p$-torsion free, we have
$f_*(b)=0$, and hence $b\in A^{d+i}(X,j)$. Thus
$A^{d+i}(X,j)$ is divisible.

We therefore have a short exact sequence
\[
0\to A^{d+i}(X,j)\to CH^{d+i}(X,j)
\xrightarrow{f_*}CH^i(F,j)\to0.
\]
Since a divisible abelian group is injective, this sequence splits
non-canonically. Hence
\[
CH^{d+i}(X,j)
\simeq
CH^i(F,j)\oplus A^{d+i}(X,j).
\]

\smallskip
\noindent
(3)\ By \autoref{lem:global}, we have $CH^i(F,j)=0$, so that
$A^{d+i}(X,j)=CH^{d+i}(X,j)$. By \autoref{prop:l-div}, this group is
$l$-torsion free for every prime $l\neq\ch(F)$. If $\ch(F)=p>0$,
then \autoref{prop:uniq-p-div} shows that it is also $p$-torsion
free. Therefore, $CH^{d+i}(X,j)=A^{d+i}(X,j)$ is torsion free.

\smallskip
\noindent
(4)\ Again, \autoref{lem:global} gives $CH^i(F,j)=0$, and hence
$A^{d+i}(X,j)=CH^{d+i}(X,j)$. For every prime
$l\neq\ch(F)$, \autoref{prop:l-div} shows that this group is uniquely
$l$-divisible. If $\ch(F)=p>0$, then
\autoref{prop:uniq-p-div} gives unique $p$-divisibility. Thus
$CH^{d+i}(X,j)=A^{d+i}(X,j)$ is uniquely divisible.
\end{proof}

\begin{cor}\label{cor:global-real}
Let $F$ be an arbitrary number field, and let $X$ be a smooth proper
and geometrically irreducible scheme over $F$ of dimension $d$.
Assume that $i\ge2$. Then the following hold.
\begin{enumerate}
\item If $2i-j\ge3$ or $2i-j<-2d$, there are canonical short exact
sequences
\[
0\to CH^{d+i}(X,j)[2]
\to CH^{d+i}(X,j)
\to D\to0
\]
and
\[
0\to A^{d+i}(X,j)[2]
\to A^{d+i}(X,j)
\to D_A\to0,
\]
where
\[
D:=CH^{d+i}(X,j)/CH^{d+i}(X,j)[2]
\quad\text{and}\quad
D_A:=A^{d+i}(X,j)/A^{d+i}(X,j)[2]
\]
are uniquely divisible.
\item If $2i-j=2$, the cokernel of
\[
f_*\colon CH^{d+i}(X,j)\to CH^i(F,j)
\]
is killed by $2$, and there is a canonical short exact sequence
\[
0\to A^{d+i}(X,j)[2]
\to A^{d+i}(X,j)
\to D_A\to0,
\]
where
\[
D_A:=A^{d+i}(X,j)/A^{d+i}(X,j)[2]
\]
is divisible.
\item If $2i-j=-2d$, the torsion subgroups of
$CH^{d+i}(X,j)$ and $A^{d+i}(X,j)$ are killed by $2$.
\end{enumerate}
\end{cor}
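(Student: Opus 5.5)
The plan is to treat the odd primes and the prime $2$ separately. For a number field $F$ we have $\cd_l(F)=2$ for every odd prime $l$; only $\cd_2(F)$ can be infinite, and only when $F$ has a real place. So every odd-prime statement of \autoref{prop:l-div} and \autoref{thm:CH-div} applies with $s=2$. At $l=2$ we have no unique divisibility, and the $2$-primary torsion should be the only obstruction.

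For (1), consider first the case $2i-j\ge 3$.
\begin{itemize}
\item[(a)] For odd $l$, \autoref{thm:CH-div}\,(2) gives that $A^{d+i}(X,j)$ is uniquely $l$-divisible.
\item[(b)] I claim that $CH^i(F,j)$ is also uniquely $l$-divisible for odd $l$. To see this, rerun the proof of \autoref{lem:global}\,(3) with $U=\Spec(\mathcal O_F)$. The group $CH^i(F,j)$ is a quotient of the finitely generated, finite group $CH^i(U,j)$; Borel's theorem still gives rank zero, since the real places only affect ranks through $K_j(U)^{(i)}_\Q$ with $2i-j=1$. It is also $l$-divisible for odd $l$ by \autoref{prop:l-div}. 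Hence $CH^i(F,j)$ is a finite group killed by a power of $2$.
\item[(c)] Using \autoref{thm:CH-div}\,(1) at odd $l$, we conclude that $CH^{d+i}(X,j)$ is uniquely $l$-divisible for all odd $l$.
\end{itemize}
For $2i-j<-2d$, \autoref{prop:l-div} at odd $l$ already gives unique $l$-divisibility of $CH^{d+i}(X,j)$, and of $A^{d+i}(X,j)$ via the kernel argument.

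It remains to handle the prime $2$. The key input I would use is that the $2$-adic étale cohomology of $X$ in high degrees is governed by the real places. Concretely, for $b>2d+2$ the restriction map
\[
H^b_\et(X,\Z/2^n(r))\to\bigoplus_{v\ \mathrm{real}}H^b_\et(X_{F_v},\Z/2^n(r))
\]
is an isomorphism, and the groups on the right are computed from the $G_\R$-equivariant cohomology of $X(\C)$. These are killed by $2$, being Tate cohomology of $\Z/2$, with coefficients $\Z/2^n$ reduced by $2$. From this I want to deduce two facts:
\begin{itemize}
\item the $2$-primary torsion of $CH^{d+i}(X,j)$ is killed by $2$;
\item $CH^{d+i}(X,j)/CH^{d+i}(X,j)[2]$ is $2$-divisible.
\end{itemize}
A cleaner route, which I would try first, uses a transfer. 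Let $F'=F(\sqrt{-1})$, which is totally imaginary. By \autoref{cor:global}, $G'=CH^{d+i}(X_{F'},j)$ is uniquely divisible in these ranges. For $G=CH^{d+i}(X,j)$ we have $\pi_*\pi^*=2$ on $G$.
\begin{itemize}
\item If $x\in G$ is torsion, then $\pi^*x$ is torsion in $G'$, hence zero, so $2x=\pi_*\pi^*x=0$. Thus $G_{\mathrm{tor}}=G[2]$.
\item For $2$-divisibility of $D=G/G[2]$: given $x\in G$, write $\pi^*x=4y$ with $y\in G'$. Then $2x=\pi_*\pi^*x=4\pi_*y$, so $2(x-2\pi_*y)=0$. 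Hence $x\equiv 2\pi_*y$ modulo $G[2]$, and $D$ is $2$-divisible. It is also $2$-torsion free, because its torsion is the image of $G_{\mathrm{tor}}=G[2]$, which is zero.
\end{itemize}
The same argument shows that $D$ is uniquely $l$-divisible for odd $l$. The argument for $A^{d+i}(X,j)$ is identical, using the analogous statement for $A^{d+i}(X_{F'},j)$ from \autoref{cor:global} and the compatibility of $\pi_*$ and $\pi^*$ with $f_*$. This transfer argument also removes the need for step (b) above.

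For (2), the case $2i-j=2$, I would apply the same transfer.
\begin{itemize}
\item \textbf{Cokernel of $f_*$.} By \autoref{cor:global}\,(2), $f'_*$ is surjective over $F'$. For $a\in CH^i(F,j)$ we have $2a=\Cor(\res a)=\Cor(f'_*z)=f_*(\pi_*z)$ for some $z$. Hence $\Coker f_*$ is killed by $2$.
\item \textbf{Divisibility of $D_A$.} Since $A^{d+i}(X_{F'},j)$ is divisible, the same computation as in (1) shows that $A^{d+i}(X,j)/A^{d+i}(X,j)[2]$ is divisible. Here I use $\pi^*(A)\subset A'$, and for $y\in A'$, $\pi_*y\in A$.
\end{itemize}

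For (3), \autoref{cor:global}\,(3) says $CH^{d+i}(X_{F'},j)$ is torsion free, and the relation $\pi_*\pi^*=2$ kills torsion by $2$ as above; the subgroup $A^{d+i}(X,j)$ inherits this. The main obstacle I expect is a technical one: that \autoref{cor:global} applies to $X_{F'}$, which requires $X_{F'}$ to remain geometrically irreducible (it does), and that pull-back and push-forward for the finite étale map $\pi$ satisfy $\pi_*\pi^*=[F':F]$ on higher Chow groups.
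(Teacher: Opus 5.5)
Your proposal is correct and follows the paper's proof. You base change to $F'=F(\sqrt{-1})$, apply \autoref{cor:global} to $X_{F'}$, and use $\pi_*\pi^*=[F':F]=2$ to show that torsion is killed by $2$, that the quotients modulo the subgroups killed by $2$ are divisible, and that $\Coker(f_*)$ is killed by $2$; the paper first disposes of the totally imaginary case, which also covers $\sqrt{-1}\in F$, where the degree is $1$ and your formula $\pi_*\pi^*=2$ does not hold. Your remark that writing $\pi^*x=2ny$ gives $n$-divisibility modulo the $2$-torsion for every $n$ makes the odd-prime inputs unnecessary. Those inputs are your steps (a)--(c) and the real-place detour, and the paper's appeal to \autoref{prop:l-div} and \autoref{thm:CH-div} at odd $l$; dropping them is a minor streamlining of the same argument.
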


\begin{proof}
If $F$ is totally imaginary, the assertions follow from
\autoref{cor:global}. We may therefore assume that $F$ has a real
place. Put $F'=F(\sqrt{-1})$ and $\pi:X_{F'}\to X$. Then $F'$ is totally imaginary and
$[F':F]=2$. The pull-back and proper push-forward maps preserve the
kernels of the structure morphisms and satisfy
$\pi_*\pi^*=[F':F]$ 
on both $CH^{d+i}(X,j)$ and $A^{d+i}(X,j)$.

\smallskip
\noindent
(1)\ Assume that $2i-j\ge3$ or $2i-j<-2d$. By
\autoref{cor:global}, the corresponding groups over $F'$ are uniquely
divisible. For every odd prime $l$, \autoref{thm:CH-div} shows that
$A^{d+i}(X,j)$ is uniquely $l$-divisible. The same conclusion for
$CH^{d+i}(X,j)$ follows directly from \autoref{prop:l-div}, except
possibly when $2i-j=3$. In that case, \autoref{prop:l-div} gives
$l$-divisibility. If $x\in CH^{d+i}(X,j)[l]$, then
$\pi^*(x)=0$, since $CH^{d+i}(X_{F'},j)$ is torsion free. Hence
\[
2x=\pi_*\pi^*(x)=0.
\]
Since $l$ is odd and $lx=0$, we obtain $x=0$. Thus
$CH^{d+i}(X,j)$ is uniquely $l$-divisible also when $2i-j=3$.
It remains to consider multiplication by $2$ modulo the subgroups
killed by $2$.

Let $x\in CH^{d+i}(X,j)$. Since $\pi^*(x)$ is divisible by $4$, choose
$y\in CH^{d+i}(X_{F'},j)$ such that $\pi^*(x)=4y$. Then
\[
2\bigl(2\pi_*(y)-x\bigr)=0,
\]
so the class of $x$ in $CH^{d+i}(X,j)/CH^{d+i}(X,j)[2]$ is twice the class of
$\pi_*(y)$. Thus the quotient $CH^{d+i}(X,j)/CH^{d+i}(X,j)[2]$ is $2$-divisible. If the class of $x$ is
killed by $2$, then $4x=0$. Since the group over $F'$ is torsion
free, $\pi^*(x)=0$, and hence $2x=\pi_*\pi^*(x)=0$. Therefore
$CH^{d+i}(X,j)/CH^{d+i}(X,j)[2]$ is $2$-torsion free. The same argument applies to $A^{d+i}(X,j)$.
This proves (1).

\smallskip
\noindent
(2)\ Suppose that $2i-j=2$. Over $F'$, the push-forward is surjective by
\autoref{cor:global}. If $a\in CH^i(F,j)$, choose a lift of
$\res_{F'/F}(a)$ over $F'$. Pushing it forward to $F$ shows that
$2a$ lies in the image of $f_*$. Hence $\Coker(f_*)$ is killed by
$2$. Moreover, $A^{d+i}(X_{F'},j)$ is divisible. The same argument
as above, without the torsion-freeness step, shows that
$A^{d+i}(X,j)/A^{d+i}(X,j)[2]$ is $2$-divisible. It is also $l$-divisible for every
odd prime $l$ by \autoref{thm:CH-div}. Hence $A^{d+i}(X,j)/A^{d+i}(X,j)[2]$ is
divisible, proving (2).

\smallskip
\noindent
(3)\ Assume that $2i-j=-2d$. The groups over $F'$ are torsion
free by \autoref{cor:global}, while \autoref{prop:l-div} and
\autoref{thm:CH-div} show that $CH^{d+i}(X,j)$ and $A^{d+i}(X,j)$ have no odd-primary
torsion. If $x$ is a torsion element, then $\pi^*(x)=0$, and hence
$2x=\pi_*\pi^*(x)=0$. This proves (3).
\end{proof}

\begin{conj}[{Motivic Bass conjecture, \cite[Conjecture~37]{Kah05}}]
\label{conj:Bass}
Let $Y$ be a regular scheme of finite type over $\Z$. Then
$H^a(Y,\Z(b))$ is finitely generated for all integers $a$ and $b$.
\end{conj}

\begin{prop}\label{prop:global-Bass}
Assume the motivic Bass conjecture for regular schemes of finite type
over $\Z$. Let $F$ be a global field such that either $\ch(F)>0$ or
$F$ is a totally imaginary number field. Let $X$ be a smooth proper
and geometrically irreducible scheme over $F$ of dimension $d$.
If $i\ge2$ and $2i-j\ge3$, then $CH^{d+i}(X,j)=0$.
\end{prop}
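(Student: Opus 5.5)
By \autoref{cor:global}\,(1), $CH^{d+i}(X,j)$ is uniquely divisible. A uniquely divisible group that is also finitely generated must be zero. So the plan is to deduce finite generation of $CH^{d+i}(X,j)$ from \autoref{conj:Bass}, applied to a regular model of $X$.

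First choose a model. Let $U$ be as in the proof of \autoref{lem:global}: $\Spec(\mathcal O_F)$ or the smooth projective curve with function field $F$. After shrinking $U$ to a dense open subscheme, $X$ spreads out to a smooth proper morphism $\mathcal X\to U$ with generic fibre $X$. Then $\mathcal X$ is regular of finite type over $\Z$, of dimension $d+1$.

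Next, the localization sequence for $\mathcal X_x\hookrightarrow\mathcal X\hookleftarrow X$, passed to the limit over open subschemes of $U$, gives an exact sequence
\[
CH^{d+i}(\mathcal X,j)\to CH^{d+i}(X,j)\to\bigoplus_{x\in U_0}CH^{d+i-1}(\mathcal X_x,j-1).
\]
Each fibre $\mathcal X_x$ is smooth proper of dimension $d$ over the finite field $\kappa(x)$. Its group has the form $CH^{d+(i-1)}(\mathcal X_x,j-1)$ with
\[
2(i-1)-(j-1)=2i-j-1\ge2.
\]
Since $i-1\ge1$, \autoref{cor:fin2}\,(3) shows this group is uniquely divisible.

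The main obstacle is that uniquely divisible is weaker than zero, which is what the argument really needs. Under \autoref{conj:Bass} applied to the regular scheme $\mathcal X_x$, the fibre group $CH^{d+i-1}(\mathcal X_x,j-1)=H^{2(d+i-1)-(j-1)}(\mathcal X_x,\Z(d+i-1))$ is finitely generated. Being also uniquely divisible, it vanishes. (Alternatively, when $j-1=0$ it already vanishes by the dimension count.)

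Hence $CH^{d+i}(\mathcal X,j)\to CH^{d+i}(X,j)$ is surjective. Under \autoref{conj:Bass}, $CH^{d+i}(\mathcal X,j)=H^{2(d+i)-j}(\mathcal X,\Z(d+i))$ is finitely generated, so $CH^{d+i}(X,j)$ is finitely generated as well. A finitely generated uniquely divisible group is zero, so $CH^{d+i}(X,j)=0$.

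The remaining technical points are the existence of the smooth proper model and the localization sequence for higher Chow groups of schemes over a Dedekind base. For the latter I would cite Levine's localization theorem, which is the same input used in \autoref{lem:global}, and argue over a sufficiently small open subscheme of $U$ rather than trying to handle every closed fibre.
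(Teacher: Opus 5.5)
Your proposal is correct and follows essentially the same route as the paper: spread $X$ out to a smooth proper model $\mathcal X\to U$, kill the boundary terms $CH^{d+i-1}(\mathcal X_x,j-1)$ of the limit localization sequence using \autoref{cor:fin2} together with the Bass conjecture on the fibres, deduce finite generation of $CH^{d+i}(X,j)$ from that of $CH^{d+i}(\mathcal X,j)$, and conclude from the unique divisibility given by \autoref{cor:global}\,(1). The only cosmetic difference is that the paper first disposes of $j=0$ by a dimension count; in your argument this case is handled implicitly by the convention $CH^r(-,-1)=0$.
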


\begin{proof}
For dimensional reasons, we may assume $j\ge 1$. 
We first show that $CH^{d+i}(X,j)$ is finitely generated. Choose a
nonempty regular one-dimensional open model $U$ of $F$. After
shrinking $U$ if necessary, choose a smooth proper morphism
$\mathcal X\to U$ whose generic fiber is $X$.

Since $U$ is regular and $\mathcal X$ is smooth over $U$, the scheme
$\mathcal X$ is regular and of finite type over $\Z$. Hence the
motivic Bass conjecture shows that
$CH^{d+i}(\mathcal X,j)=H^{2d+2i-j}(\mathcal X,\Z(d+i))$ is finitely
generated.

For each closed point $x\in U$, put
$\mathcal X_x=\mathcal X\times_U\kappa(x)$. For a nonempty open
subset $V\subset U$, put $\mathcal X_V=\mathcal X\times_U V$.
Passing to the direct limit of the localization sequences for the open
immersions $\mathcal X_V\hookrightarrow\mathcal X$, as $V$ ranges
over the nonempty open subsets of $U$, and using the continuity of
higher Chow groups, we obtain an exact sequence
\begin{equation}\label{ex:cX}
CH^{d+i}(\mathcal X,j)\to CH^{d+i}(X,j)\to
\bigoplus_{x\in U_0}CH^{d+i-1}(\mathcal X_x,j-1).
\end{equation}
Each $\mathcal X_x$ is smooth proper over the finite field
$\kappa(x)$. Since $2(i-1)-(j-1)=2i-j-1\ge2$,
\autoref{cor:fin2} shows that
$CH^{d+i-1}(\mathcal X_x,j-1)$ is uniquely divisible. On the other
hand, the motivic Bass conjecture shows that this group is finitely
generated. Therefore,
$CH^{d+i-1}(\mathcal X_x,j-1)=0$ for every $x\in U_0$.

It follows from \eqref{ex:cX} that
$CH^{d+i}(\mathcal X,j)\to CH^{d+i}(X,j)$ is surjective. Hence
$CH^{d+i}(X,j)$ is finitely generated. By
\autoref{cor:global}, the group $CH^{d+i}(X,j)$ is uniquely
divisible. A finitely generated uniquely divisible group is zero.
Thus $CH^{d+i}(X,j)=0$.
\end{proof}

\begin{cor}\label{cor:global-real-Bass}
Assume the motivic Bass conjecture for regular schemes of finite type
over $\Z$. Let $F$ be an arbitrary number field, and let $X$ be a
smooth proper and geometrically irreducible scheme over $F$ of
dimension $d$. If $i\ge2$ and $2i-j\ge3$, then
$CH^{d+i}(X,j)$ is killed by $2$.
\end{cor}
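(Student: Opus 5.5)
The plan is to compare $X$ with its base change $\pi\colon X_{F'}\to X$, where $F'=F(\sqrt{-1})$. If $F$ is totally imaginary, \autoref{prop:global-Bass} already gives $CH^{d+i}(X,j)=0$, so we may assume $F$ has a real place. Then $F'$ is a totally imaginary number field with $[F':F]=2$, and $X_{F'}$ is smooth proper and geometrically irreducible over $F'$ of dimension $d$. Hence \autoref{prop:global-Bass}, under the assumed motivic Bass conjecture, gives $CH^{d+i}(X_{F'},j)=0$ for $i\ge2$ and $2i-j\ge3$.

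For any $x\in CH^{d+i}(X,j)$ we have $\pi^*(x)\in CH^{d+i}(X_{F'},j)=0$. The projection formula gives $\pi_*\pi^*=[F':F]=2$, as already used in the proof of \autoref{cor:global-real}. Therefore
\[
2x=\pi_*\pi^*(x)=0,
\]
so $CH^{d+i}(X,j)$ is killed by $2$.

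The only point that needs checking is the identity $\pi_*\pi^*=\deg(\pi)$ on higher Chow groups for the finite flat morphism $X_{F'}\to X$. This is standard: it is the projection formula for finite flat maps, since $\pi_*[X_{F'}]=2[X]$. It is used in the same form earlier in the paper, so there is no real obstacle; the whole argument reduces to \autoref{prop:global-Bass}.

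An alternative argument would run the Bass-conjecture proof directly over $F$ to get finite generation. Combined with \autoref{cor:global-real}(1), it would show that $CH^{d+i}(X,j)/CH^{d+i}(X,j)[2]$ is finitely generated and uniquely divisible, hence zero. This route also works, but the base-change argument above is shorter.
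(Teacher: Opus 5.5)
Your proof is correct, but it differs from the paper's main route. The paper's route is the one you list as your alternative.

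\textbf{The paper's route.} The paper does not base change for this corollary. It notes that the finite-generation half of the proof of \autoref{prop:global-Bass} never uses that $F$ is totally imaginary. That half consists of the smooth proper model $\mathcal X\to U$, the localization sequence, and the vanishing of the fiber terms via \autoref{cor:fin2} plus Bass. So $CH^{d+i}(X,j)$ is finitely generated over $F$ itself. The paper then applies \autoref{cor:global-real}\,(1): $CH^{d+i}(X,j)/CH^{d+i}(X,j)[2]$ is uniquely divisible, and a finitely generated uniquely divisible group is zero.

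\textbf{Your route.} You use \autoref{prop:global-Bass} only as a black box over $F'=F(\sqrt{-1})$, together with $\pi_*\pi^*=2$. This identity holds already on cycles, because $Z_{F'}\to Z$ is finite flat of degree $[F':F]$ for every integral $Z$. Your argument therefore avoids reopening an earlier proof and does not need \autoref{cor:global-real} at all. The paper's route does not really avoid base change either, since \autoref{cor:global-real}\,(1) is itself proved by the same $F(\sqrt{-1})$ norm trick.

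\textbf{What each buys.} The paper's route gives a slightly stronger conclusion. Because it shows the group is finitely generated and killed by $2$, $CH^{d+i}(X,j)$ is in fact a finite elementary abelian $2$-group. Your base-change argument alone gives only that the group is killed by $2$; to get finiteness you would add the finite-generation step over $F$.
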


\begin{proof}
The proof of \autoref{prop:global-Bass} shows that
$CH^{d+i}(X,j)$ is finitely generated. By
\autoref{cor:global-real}\,(1), its quotient by the subgroup killed
by $2$ is uniquely divisible. A finitely generated uniquely divisible
group is zero. Hence $CH^{d+i}(X,j)$ is killed by $2$.
\end{proof}


\end{document}